\definecolor{darkblue}{rgb}{0,0,0.8}
\newtheorem{theorem}{Theorem}[section]
\newtheorem{lemma}[theorem]{Lemma}
\newtheorem{corollary}[theorem]{Corollary}
\theoremstyle{definition}
\newtheorem{defn}[theorem]{Definition}
\newtheorem{example}[theorem]{Example}
\newtheorem{question}[theorem]{Question}
\newtheorem{problem}[theorem]{Problem}
\newtheorem{hypothesis}[theorem]{Hypothesis}
\DeclareMathOperator{\PSL}{{\mathrm{PSL}}}
\DeclareMathOperator{\PSU}{{\mathrm{PSU}}}
\DeclareMathOperator{\PGU}{{\mathrm{PGU}}}
\DeclareMathOperator{\PSp}{{\mathrm{PSp}}}
\DeclareMathOperator{\PGL}{{\mathrm{PGL}}}
\DeclareMathOperator{\AGL}{{\mathrm{AGL}}}
\DeclareMathOperator{\SL}{{\mathrm{SL}}}
\DeclareMathOperator{\GL}{{\mathrm{GL}}}
\DeclareMathOperator{\GU}{{\mathrm{GU}}}
\DeclareMathOperator{\SU}{{\mathrm{SU}}}
\DeclareMathOperator{\Sp}{{\mathrm{Sp}}}
\DeclareMathOperator{\GammaL}{\Gamma{\mathrm{L}}}
\DeclareMathOperator{\PGammaL}{\mathrm{P}\Gamma{\mathrm{L}}}
\DeclareMathOperator{\PGammaU}{\mathrm{P}\Gamma{\mathrm{U}}}
\DeclareMathOperator{\GO}{{\mathrm{GO}}}
\DeclareMathOperator{\Ree}{{\mathrm{Ree}}}
\DeclareMathOperator{\Sz}{{\mathrm{Sz}}}
\DeclareMathOperator{\GF}{{\mathrm{GF}}}
\DeclareMathOperator{\Aut}{{\mathrm{Aut}}}
\DeclareMathOperator{\Sym}{{\mathrm{Sym}}}
\DeclareMathOperator{\Syl}{{\mathrm{Syl}}}
\DeclareMathOperator{\soc}{{\mathrm{soc}}}
\DeclareMathOperator{\fix}{{\mathrm{fix}}}
\DeclareMathOperator{\Inn}{{\mathrm{Inn}}}
\DeclareMathOperator{\Out}{{\mathrm{Out}}}
\DeclareMathOperator{\Wr}{\,{\mathrm{wr}\,}}
\newcommand\norml{\trianglelefteqslant}
\newcommand{\MAGMA}{{\sc Magma}\ } 
\renewcommand{\leq}{\leqslant}
\renewcommand{\geq}{\geqslant}
\renewcommand{\ge}{\geqslant}
\begin{document}
\title{Orbits of Sylow subgroups of finite permutation groups}
\author[Bamberg]{John Bamberg}
\author[Bors]{Alexander Bors}
\author[Devillers]{Alice Devillers}
\author[Giudici]{Michael Giudici}
\author[Praeger]{Cheryl E. Praeger}
\author[Royle]{Gordon F. Royle}

\address{All but second author: Centre for the Mathematics of Symmetry and Computation, Department of Mathematics and Statistics, The University of Western Australia, Crawley, WA 6009, Australia.}
\address{Second author: School of Mathematics and Statistics, Carleton University, 1125 Colonel By Drive, Ottawa ON K1S 5B6, Canada.}
\email{All but second author: firstname.lastname@uwa.edu.au}
\email{Second author: alexanderbors@cunet.carleton.ca}
\date{\today}

\begin{abstract}
We say that a finite group $G$ acting on a set $\Omega$ has \emph{Property~$(*)_p$} for a prime $p$ if $P_\omega$ is a Sylow $p$-subgroup of $G_\omega$ for all $\omega\in\Omega$ and Sylow $p$-subgroups $P$ of $G$.
Property~$(*)_p$ arose in the recent work of Tornier (2018) on local Sylow $p$-subgroups of Burger-Mozes groups, and he determined the values of $p$ for which
the alternating group $A_n$ and symmetric group $S_n$ acting on $n$ points has Property~$(*)_p$. In this paper, we extend this result to finite $2$-transitive groups
and we give a structural characterisation result for the finite primitive groups that satisfy Property~$(*)_p$ for an allowable prime~$p$.
\end{abstract}

\maketitle

\section{Introduction}
Given $G\leqslant \Sym(\Omega)$ with $|\Omega|=n$, the \emph{Burger-Mozes group} $U(G)$ \cite{BM} is the group of all automorphisms of the $n$-regular tree $T_n$ such that, for all vertices $v$, the stabiliser of $v$ induces the group $G$ on the set of all neighbours of $v$. Tornier \cite{tornier} introduced the following property when studying local Sylow $p$-subgroups of Burger-Mozes groups. We use $\Syl_p(G)$ to denote the set of all Sylow $p$-subgroups of a group $G$ and for a group $G$ acting on a set $\Omega$ we use $G_\omega$ to denote the stabiliser in $G$ of the point $\omega\in\Omega$.

\begin{defn}\label{def}
Let $G$ be a finite group acting on a set $\Omega$ with $|\Omega|=n$, let $p$ be a prime and let $P\in\Syl_p(G)$. We say that $G$ has Property~$(*)_p$ if $P_\omega\in\Syl_p(G_\omega)$ for all $\omega\in\Omega$.
\end{defn}
 
Note that since $G$ acts transitively by conjugation on the set of Sylow $p$-subgroups of $G$, Property~$(*)_p$ does not depend on the choice of $P$. From the definition it is immediate that any \emph{semiregula}r permutation group, (i.e., one where $G_\omega=1$ for all $\omega\in\Omega$) has Property~$(*)_p$. 

Tornier's motivation for studying Property~$(*)_p$ was as follows:
Let $T$ be a finite subtree of $T_n$ and for any $H\leqslant\Aut(T_n)$ let $H_{(T)}$ denote the pointwise stabiliser of $T$ in $H$. Then Tornier \cite[Proposition 10]{tornier} showed that for $G\leqslant\Sym(\Omega)$ and $P\in\Syl_p(G)$, the group $U(P)_{(T)}$ is a local Sylow $p$-subgroup of $U(G)_{(T)}$ if and only if $G$ has Property~$(*)_p$.  Tornier \cite[Proposition 11]{tornier} showed that if $G$ is the alternating group $A_n$ or symmetric group $S_n$ acting on $n$ points then $G$ has Property~$(*)_p$, with $p$ dividing $|G|$, if and only if $n_pp>n$, where $n_p$ is the highest power of $p$ that divides $n$.  Moreover, if either $P$ has the same orbits as $G$ or $|\Omega|=p^n$, then $(*)_p$ holds \cite[Propositions 12 and 13]{tornier}.

The purpose of this paper is to undertake a thorough investigation of permutation groups with Property~$(*)_p$. In Section~\ref{sec:prelim} we start by showing that a permutation group has Property $(*)_p$ if and only if its induced action on each orbit has Property $(*)_p$, thereby reducing the problem to transitive groups. In the remainder of Section~\ref{sec:prelim} we derive some basic structural results leading up to the fact that a transitive permutation group of degree $n$ has Property $(*)_p$ if and only it has a Sylow $p$-subgroup all of whose orbits are the same size. In this case, the common size of the orbits is the highest power of $p$ dividing $n$,  that is,  $n_p$. This also allows us to deduce that if $pn_p>n$ then $G$ has Property $(*)_p$ (Lemma \ref{lem:elementary}(5)).

A transitive group can either be imprimitive or primitive, and so we consider each case in turn. In Section~\ref{section:imprimitive} we consider imprimitive groups, where we prove the following:

\begin{theorem}\label{thm:imprim}
Let $G$ act transitively on a set $\Omega$ with system of imprimitivity $\mathcal{B}$. Let $B\in\mathcal{B}$, let $G_B^B$ be the permutation group induced on $B$ by the setwise stabiliser $G_B$ and let $G^{\mathcal{B}}$ be the permutation group induced by $G$ on $\mathcal{B}$. 
\begin{enumerate}
\item If $G_B^B$ and $G^{\mathcal{B}}$ have Property $(*)_p$ then $G$ has Property $(*)_p$.
\item If $G$ has Property $(*)_p$ then $G_B^B$ has Property $(*)_p$.
\item If $G$ has Property $(*)_p$ then $G^{\mathcal{B}}$ does not necessarily have Property $(*)_p$.
\end{enumerate}
\end{theorem}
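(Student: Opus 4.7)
The plan is to use the characterisation established in Section~\ref{sec:prelim} that a transitive group $G$ has Property~$(*)_p$ if and only if (every, equivalently some) Sylow $p$-subgroup $P$ of $G$ has all orbits on $\Omega$ of the same size $n_p$, together with the elementary orbit identity
$|\omega^P| = |B^P| \cdot |\omega^{P_B}|$
valid whenever $\omega \in B \in \mathcal{B}$ (which comes from decomposing $\omega^P$ as a union of $|B^P|$ translates of $\omega^{P_B}$ across the blocks in $B^P$).

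For part (1), I start with an arbitrary $P \in \Syl_p(G)$ and aim to show every $P$-orbit on $\Omega$ has size $n_p = |\mathcal{B}|_p \cdot |B|_p$. Since $P^{\mathcal{B}}$ is a Sylow $p$-subgroup of $G^{\mathcal{B}}$, the hypothesis on $G^{\mathcal{B}}$ forces $|B^P| = |\mathcal{B}|_p$ for every $B \in \mathcal{B}$. Using $|G_B| = |G|/|\mathcal{B}|$, this gives $|P_B| = |G|_p/|\mathcal{B}|_p = |G_B|_p$, so $P_B \in \Syl_p(G_B)$ and hence $P_B^B \in \Syl_p(G_B^B)$. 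The hypothesis on $G_B^B$ then ensures that every orbit of $P_B^B$ (equivalently of $P_B$) on $B$ has size $|B|_p$, and the orbit identity yields $|\omega^P| = |\mathcal{B}|_p \cdot |B|_p = n_p$, as required.

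For part (2), I work in the opposite direction. The key trick is not to begin with an arbitrary Sylow of $G$ but to pick $R \in \Syl_p(G_B)$ and extend it to $P \in \Syl_p(G)$. Then $R \le P \cap G_B = P_B$, and a size comparison forces $R = P_B$, so $|B^P| = |G|_p/|G_B|_p = |\mathcal{B}|_p$. Since $G$ has $(*)_p$, we have $|\omega^P| = n_p$ for all $\omega$, and the orbit identity gives $|\omega^{P_B}| = n_p/|\mathcal{B}|_p = |B|_p$ for every $\omega \in B$. Projecting $R$ to $R^B \in \Syl_p(G_B^B)$ preserves orbits on $B$ (since the kernel acts trivially on $B$), so every orbit of $R^B$ has size $|B|_p$, and hence $G_B^B$ has $(*)_p$.

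For part (3), I exhibit an explicit counterexample of small degree. Let $G = S_3$ act on $\Omega = S_3$ by left multiplication (the regular action) and take $H = \langle (12) \rangle$, so that the left cosets of $H$ form a block system $\mathcal{B}$ of three blocks of size two. The Sylow $2$-subgroup $P = H$ acts on $\Omega$ with orbits equal to the right cosets of $H$, each of size $2 = n_2$, so $G$ has $(*)_2$. However, $G^{\mathcal{B}}$ is $S_3$ acting on three points (as the core of $H$ in $G$ is trivial), whose Sylow $2$-subgroup has orbits of sizes $2$ and $1$; since $|\mathcal{B}|_2 = 1$, the group $G^{\mathcal{B}}$ fails $(*)_2$. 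The main obstacle is the subtlety in part (2): given $(*)_p$ for $G$, an arbitrary Sylow of $G$ need not intersect $G_B$ in a Sylow of $G_B$ (indeed part (3) exhibits precisely this phenomenon in the reverse direction between $G$ and $G^{\mathcal{B}}$), so one must judiciously choose the Sylow of $G$ to extend a prescribed Sylow of $G_B$.
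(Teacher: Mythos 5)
Your proof is correct, and for part (1) it takes a genuinely different route from the paper. You argue directly with the orbit identity $|\omega^P| = |B^P|\cdot|\omega^{P_B}|$ for an arbitrary $P \in \Syl_p(G)$: the hypothesis on $G^{\mathcal{B}}$ forces $|B^P| = |\mathcal{B}|_p$, hence $P_B \in \Syl_p(G_B)$, and the hypothesis on $G_B^B$ then gives $|\omega^{P_B}| = |B|_p$, so every $P$-orbit has size $|\mathcal{B}|_p|B|_p = n_p$. The paper instead first proves that the full imprimitive wreath product $H \Wr K$ has Property $(*)_p$ if and only if both $H$ and $K$ do (Lemma \ref{lem:imprimwreath}), and then deduces part (1) from the embedding $G \leqslant G_B^B \Wr G^{\mathcal{B}}$ together with the fact that $(*)_p$ passes to transitive subgroups (Lemma \ref{lem:elementary}(5)). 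Your argument is more self-contained and never leaves the group $G$; the paper's detour buys it Lemma \ref{lem:imprimwreath} as a standalone if-and-only-if statement, which it reuses later in the product-action analysis. Your part (2) is essentially the paper's Lemma \ref{lem:GBB}: both rest on the same key trick of choosing a Sylow $p$-subgroup of $G_B$ first and extending it to a Sylow $p$-subgroup of $G$ (so that the intersection with $G_B$ is guaranteed to be a Sylow $p$-subgroup of $G_B$), followed by the same block decomposition of orbits. For part (3), your regular $S_3$ with blocks the cosets of a non-normal subgroup of order $2$ is a valid, minimal counterexample; the paper's headline example is $A_5$ of degree $20$ (generalised in Lemma \ref{lem:imprimG^B}), but it also records exactly your regular-action phenomenon with $D_{12}$ acting on itself.
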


Then we turn to primitive groups, and in Section~\ref{sec:primitive} and Section~\ref{sec:primitive-proof} we develop, and then prove, the following structural characterisation of primitive groups with Property~$(*)_p$.
\begin{theorem}\label{thm:primitive}
Let $G$ be a primitive permutation group on $\Omega$ and suppose that $G$ has Property~$(*)_p$ for some prime $p$ dividing $|\Omega|$. Then one of the following holds:
\begin{enumerate}
 \item $G$ is an almost simple group;
\item $G$ is of Affine type and $|\Omega|=p^k$;
\item $\Omega=\Delta^k$ for some $k\geq 2$ and $G\leqslant H\Wr K$ where $H$ is an almost simple group acting primitively on $\Delta$ with Property~$(*)_p$ and $K\leqslant S_k$. Moreover, either $p$ is coprime to $|K|$, or $p$ divides $|K|$ and $|\Delta|$ is a power of $p$.
\end{enumerate}
Moreover, any primitive group in cases (2) and (3) has Property~$(*)_p$.
\end{theorem}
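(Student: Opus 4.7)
The plan is to invoke the O'Nan--Scott theorem, partitioning primitive groups into the standard types (affine, almost simple, simple and compound diagonal, holomorphs of simple/compound type, product action, and twisted wreath), and then to test each type against the criterion established in Section~\ref{sec:prelim}: a transitive group of degree $n$ has Property~$(*)_p$ if and only if some Sylow $p$-subgroup has all orbits of the same length, necessarily $n_p$.

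The almost simple case is conclusion~(1), with no further work. For the affine case, $|\Omega|$ is a prime power $\ell^d$; since $p$ divides $|\Omega|$ we must have $p=\ell$ and hence $|\Omega|=p^d$, so $pn_p>n$ and Lemma~\ref{lem:elementary}(5) yields Property~$(*)_p$, giving conclusion~(2) together with its converse. The diagonal and twisted wreath types all have socle $T^k$ with $T$ nonabelian simple and $k\geq 2$; these I rule out by fixing a Sylow $p$-subgroup $P$ of $G$ and exhibiting two $P$-orbits on $\Omega$ of distinct lengths, using the fact that the diagonal embeddings of $T$ inside $T^k$ force point stabilisers whose $p$-parts vary across coset representatives. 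This exclusion is the step I expect to be the chief obstacle, since the subgroup structure of $T^k$ must be unpacked carefully and one may need to invoke the Classification of Finite Simple Groups to handle the smallest cases where general estimates are too weak.

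For the product action case, $G\leqslant H\Wr K$ acts on $\Delta^k$ with $H$ primitive on $\Delta$ and $K\leqslant S_k$, and any Sylow $p$-subgroup of $G$ sits inside $Q^k\rtimes R$ with $Q\in\Syl_p(H)$ and $R\in\Syl_p(K)$. The hypothesis $p\mid|\Omega|$ forces $p\mid|\Delta|$; the cases already handled exclude $H$ of diagonal or twisted wreath type, while an affine $H$ makes the socle of $G$ elementary abelian and places $G$ in conclusion~(2). So I may assume $H$ is almost simple. The $P$-orbits on $\Delta^k$ are $R$-orbits of Cartesian products of $Q$-orbits on $\Delta$, with the size of such a $P$-orbit equal to the length of the $R$-orbit times the product of the constituent $Q$-orbit sizes. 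When $p\nmid|K|$ we have $R=1$, so equal $P$-orbit lengths reduce to equal $Q$-orbit lengths, i.e., to $H$ having Property~$(*)_p$. When $p\mid|K|$, the constant tuples are fixed by $R$ while nontrivial $R$-orbits exist whenever $|\Delta/Q|\geq 2$, so equal $P$-orbit lengths force $|\Delta/Q|=1$, i.e., $|\Delta|$ is a $p$-power. This yields conclusion~(3); each configuration there is then checked directly against the orbit criterion to yield Property~$(*)_p$, providing the final converse.
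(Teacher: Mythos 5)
Your overall strategy --- O'Nan--Scott plus the equal-orbit-length criterion of Lemma~\ref{lem:easy} --- is exactly the paper's, and your treatment of the affine case and the broad shape of the product-action case match the paper. The genuine gap is the exclusion of the diagonal and twisted wreath types, which you explicitly defer as ``the chief obstacle'' and only gesture at. This step needs neither CFSG nor case-by-case estimates. For diagonal type the paper argues as follows: identify $\Omega$ with the cosets $[t_1,\ldots,t_{k-1}]=D(t_1,\ldots,t_{k-1},1)$ of the diagonal $D$ in $N=T^k$, take $Q\in\Syl_p(T)$ so that $\widehat Q=Q^k\in\Syl_p(N)$, and observe that the factor $Q^{k-1}\times 1$ already has orbits of length $n_p$. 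If $(*)_p$ held, then by Lemma~\ref{lem:elementary}(5) applied to $N$ and Lemma~\ref{lem:easy} every $\widehat Q$-orbit would coincide with the corresponding $(Q^{k-1}\times 1)$-orbit; tracking the action of the remaining factor $1^{k-1}\times Q$ on $[t,1,\ldots,1]$ then forces $Qt=tQ$ for all $t\in T$, i.e.\ $Q\norml T$, contradicting simplicity. Your proposed mechanism (``point stabilisers whose $p$-parts vary across coset representatives'') is a claim rather than an argument, and for twisted wreath type it is not even the right picture: there the socle acts \emph{regularly}, so its point stabilisers are all trivial. The paper instead embeds a twisted wreath group into a product action $H\Wr S_k$ with $H$ of diagonal type on $\Delta=T$ and rules it out by combining the diagonal exclusion with the product-action reduction lemma.

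A secondary weakness: in the product-action case you reason with the full group $Q^k\rtimes R$, but $G$ may be a proper subgroup of $H\Wr K$, so a Sylow $p$-subgroup of $G$ need not contain $Q^k$ and need not decompose as you describe; moreover your formula ``orbit length equals the $R$-orbit length times the product of the constituent $Q$-orbit lengths'' already fails for constant tuples, whose $Q^k\rtimes R$-orbit has length $|\delta^Q|^k$ with no factor from $R$. The paper's Hypothesis~\ref{productactionhyp} (exploiting the transitive subgroup $N^k\leqslant G$) and Lemmas~\ref{lem:reducePA}--\ref{lem:pdivK} supply the missing bookkeeping: one shows $Q^k$ is already transitive on the $R$-orbit of $\omega'=(\delta_1,\delta_2,\ldots,\delta_2)$, deduces $R=Q^kR_{\omega'}$, and extracts an element of $R_{\omega'}$ carrying coordinate $1$ to coordinate $2$, which forces a Sylow $p$-subgroup of $H$ to be transitive on $\Delta$. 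These points are repairable with standard care, but the diagonal and twisted wreath exclusions are the substantive steps your proof is missing.
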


Not all almost simple groups satisfy Property~$(*)_p$ and indeed examples for which $pn_p<n$ and $p$ divides $|G_\omega|$ seem rare. Indeed, apart from examples from one infinite family satisfying Property $(*)_2$, there are only 6 such examples of degree at most 4095. See Subsection \ref{sec:smalldegree}. This suggests the following problem:

\begin{problem}\label{prob:prim}
Determine all the almost simple primitive permutation groups $G$ of degree $n$ that have Property $(*)_p$ for some prime $p$ dividing $n$ and for which $pn_p<n$ and $p$ divides $|G_\omega|$.
\end{problem}

As a contribution towards Problem \ref{prob:prim}, we determine all the 2-transitive permutation groups with Property~$(*)_p$. This generalises Tornier's result for $A_n$ and $S_n$, while all 2-transitive groups with Property~$(*)_p$ and $n_p=p$ have already been determined by the fourth author \cite[Theorem]{Praeger}. 

\begin{theorem}\label{thm:2trans}
Let $G$ be a $2$-transitive permutation group of degree $n$ on a set $\Omega$ with $\omega\in\Omega$, and let $p$ a prime dividing $n$. Then $G$ has Property~$(*)_p$ if and only if  one of the following holds:
\begin{enumerate}[{\rm (a)}]
    \item $pn_p>n$;
\item $p$ does not divide $|G_\omega|$;
    \item $G=A_5$ with $n=6$ and $p=2$;
    \item  $G=M_{11}$ with $n=12$ and $p=3$;
    \item $G=\PGammaL_2(8)$ with $n=28$ and $p=2$;
  \end{enumerate}
\end{theorem}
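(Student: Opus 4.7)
My plan is to apply the classification of finite 2-transitive permutation groups (via CFSG). The sufficiency of condition (a) is Lemma \ref{lem:elementary}(5), and in case (b) the stabiliser $P_\omega=1$ is trivially a Sylow $p$-subgroup of $G_\omega$. For cases (c), (d), (e), the orbit characterisation of Property $(*)_p$ from Section \ref{sec:prelim} reduces the check to exhibiting a Sylow $p$-subgroup $P$ of $G$ whose orbits on $\Omega$ all have the same size $n_p$; since the three exceptional groups have degrees only $6$, $12$, $28$, this is a routine \MAGMA\ or hand check.

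For necessity, suppose $G$ has Property $(*)_p$ with $p\mid n$ but neither (a) nor (b) applies, so $pn_p\leq n$ and $p\mid |G_\omega|$. If $G$ is of affine type then $n=r^k$ for some prime $r$; since $p\mid n$ this forces $p=r$, so $n_p=n$ and $pn_p>n$, contradicting the failure of (a). Hence $G$ is almost simple, and I would traverse the list of almost simple 2-transitive groups: socles $A_m$ on $m$ points, $\PSL_d(q)$ on $(q^d-1)/(q-1)$ points, $\PSU_3(q)$ on $q^3+1$ points, $\Sz(q)$ on $q^2+1$ points, $\Ree(q)$ on $q^3+1$ points, $\Sp_{2d}(2)$ in its two natural actions of degree $2^{2d-1}\pm 2^{d-1}$, and the sporadic examples ($\PSL_2(11)$ on $11$ points, $A_7$ on $15$ points, the Mathieu groups, $\mathrm{HS}$ on $176$, and $\mathrm{Co}_3$ on $276$).

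For socle $A_m$ the result is Tornier's \cite[Proposition 11]{tornier}, giving only case (a). For each infinite geometric family I would use the explicit Sylow structure: split into the defining-characteristic subcase ($p$ divides $q$) and cross-characteristic subcases ($p$ divides one of the cyclotomic factors $\Phi_i(q)$ of $|G|$), and apply Zsigmondy's theorem to narrow the possible $i$. Then one analyses the orbits of a Sylow $p$-subgroup $P$ of $G$ on $\Omega$: Property $(*)_p$ requires all orbits to have length exactly $n_p$, so one seeks two points with $P$-stabilisers of different $p$-orders, witnessing distinct orbit lengths. The only infinite-family exception surviving this analysis is $\Ree(3)=\PGammaL_2(8)$ on 28 points, giving case (e); the finitely many sporadic and small cases are then handled by direct computation in \MAGMA, yielding cases (c) and (d).

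The main obstacle is the detailed orbit analysis in the $\PSL_d(q)$ case, because $n=(q^d-1)/(q-1)$ typically has prime divisors from several cyclotomic polynomials $\Phi_i(q)$ with $i\mid d$, so the cross-characteristic analysis splits into multiple subcases indexed by such $i$. In each one must locate a Sylow $p$-subgroup of the relevant almost simple overgroup (inside a Singer-type torus when $i=d$, or inside a suitable Levi complement or parabolic subgroup otherwise), and compute its orbit structure on projective points of the natural module. A related subtlety is that some 2-transitive actions exist only at the almost-simple level and not for the socle alone (as with $\Ree(3)$ versus $\PSL_2(8)$ on 28 points), so the classification must be traversed carefully at the almost-simple level, with outer automorphism contributions to $|G|_p$ tracked throughout.
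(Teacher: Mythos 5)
Your overall strategy coincides with the paper's: Burnside's theorem to reduce to the affine and almost simple cases, the affine case via Lemma~\ref{lem:elementary}(3), Tornier for socle $A_m$, and then a traversal of the classification of almost simple $2$-transitive groups using the orbit-length criterion of Lemma~\ref{lem:easy} together with primitive-prime-divisor bookkeeping (the paper's Lemma~\ref{lem:ppd}). One minor difference: the paper disposes of the whole subcase $n_p=p$ (which is where (c) and (d) live) by quoting Praeger's 1974 theorem rather than by computation, and only then assumes $n_p\geq p^2$ for the remaining analysis.

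However, as written your plan defers exactly the steps where a genuine further idea is required, and your stated mechanism --- ``seek two points with $P$-stabilisers of different $p$-orders'' inside a torus or Levi subgroup --- does not by itself resolve the hardest subcase. That is the situation where $p$ is a primitive prime divisor of $q^d-1$ (so the Sylow $p$-subgroup of the socle lies in a Singer torus and acts \emph{semiregularly} on $\Omega$) and $p$ divides $|G_\omega|$ only because $p$ divides the field-automorphism part $|G:T|$. Here no two points have socle-stabilisers of different orders; the paper's device (Lemma~\ref{lem:field}) is to show that if all orbits of $P\leq\GammaL_1(r^{df})$ had length $n_p$, then the extra order-$p$ element $\sigma$ (a field automorphism) would have to fix exactly $n/p$ points, whereas a field automorphism of order $p$ fixes at most $r^{df/p}-1$ one-dimensional subspaces; the inequality $n/p>r^{df/p}-1$ is elementary for $d\geq 3$ but for $d=2$ requires the separate number-theoretic Lemma~\ref{lem:morenum}. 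This fixed-point-counting argument is reused for $\PSU_3(q)$, $\Sz(q)$ and $\Ree(q)$, and nothing in your sketch supplies it. Likewise, the case $\soc(G)=\Ree(q)$ with $p=2$ --- which is precisely where the exception $\PGammaL_2(8)$ on $28$ points arises --- is settled in the paper by a global count (every involution fixes $q+1$ points, so the elementary abelian Sylow $2$-subgroup of order $8$ has at most $7(q+1)/4$ orbits of length $4$, fewer than $n/4$ unless $q=3$), not by locating two points with distinct stabilisers; your plan names the exception but contains no argument that would isolate it. Finally, the $\Sp_{2d}(2)$ actions need their own construction (long orbits built from orthogonal decompositions $V=V_1\perp\cdots\perp V_b$ and direct sums of quadratic forms of prescribed sign), which your sketch does not address beyond listing the case.
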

We note that in case (e) the subgroup $\PSL_2(8)$ of $G$ also has Property~$(*)_p$. However, $\PSL_2(8)$ is not 2-transitive of degree $28$, and so does not appear as a possibility.

We finish the paper in Section \ref{sec:small} with a survey of the permutation groups of degree at most 39 that have Property~$(*)_2$ or $(*)_3$.

\section{Preliminaries}\label{sec:prelim}

Our first observation (Lemma~\ref{lem:allsame}) shows that, in order to study groups with  Property~$(*)_p$, it is sufficient to restrict our attention to transitive group actions. The proof of Lemma~\ref{lem:allsame} follows immediately from Definition~\ref{def}.

\begin{lemma}\label{lem:allsame}
Let $G$ act on a set $\Omega$. Then $G$ has Property~$(*)_p$ if and only if $G$ acts on each of its orbits with Property~$(*)_p$.
\end{lemma}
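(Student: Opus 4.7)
The plan is simply to unpack Definition~\ref{def} and observe that each of its ingredients is intrinsic to the abstract group $G$, and does not depend on which $G$-set a given point is viewed as lying in. Concretely, I would fix a Sylow $p$-subgroup $P$ of $G$ and write $\Omega=\bigsqcup_{i\in I}\Omega_i$ as the disjoint union of the $G$-orbits on $\Omega$. For any $\omega\in\Omega$ lying in the orbit $\Omega_i$, the stabiliser $G_\omega=\{g\in G:\omega g=\omega\}$ depends only on the pair $(G,\omega)$, and hence is the same subgroup whether $\omega$ is regarded as a point of $\Omega$ or of $\Omega_i$; the same is true of $P_\omega=P\cap G_\omega$. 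Thus the pointwise condition ``$P_\omega\in\Syl_p(G_\omega)$'' means exactly the same thing in both settings.

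From here the equivalence is immediate. By definition, $G$ on $\Omega$ has Property~$(*)_p$ iff $P_\omega\in\Syl_p(G_\omega)$ for every $\omega\in\Omega$; partitioning the universal quantifier over $\Omega$ according to the orbit decomposition, this is the same as saying that $P_\omega\in\Syl_p(G_\omega)$ holds for every $\omega\in\Omega_i$, for every $i\in I$, which in turn is precisely the condition that $G$ acts on each orbit $\Omega_i$ with Property~$(*)_p$. No step presents a genuine obstacle; the only subtlety worth flagging is that Property~$(*)_p$ is independent of the choice of $P\in\Syl_p(G)$, as observed in the paragraph following Definition~\ref{def}, so a single Sylow $p$-subgroup $P$ can legitimately be used uniformly across all orbits in both directions of the argument.
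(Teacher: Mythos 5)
Your proposal is correct and is exactly the argument the paper has in mind: the paper gives no separate proof, stating only that the lemma "follows immediately from Definition~\ref{def}", and your unpacking (the condition $P_\omega\in\Syl_p(G_\omega)$ is pointwise and intrinsic to $(G,\omega)$, so the quantifier over $\Omega$ splits along the orbit decomposition) is precisely that immediate argument. Your remark that a single $P$ may be used uniformly, since Property~$(*)_p$ is independent of the choice of Sylow $p$-subgroup, is the right subtlety to flag.
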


The following is a generalisation of a result of Wielandt \cite[Theorem 3.4$'$]{wielandt} to arbitrary group actions.

\begin{lemma}\label{lem:shortest}
Let $G$ be a group acting transitively on a set $\Omega$  of size $n$ and let $P\in\Syl_p(G)$. Then the minimum length of a  $P$-orbit is  $n_p$.
\end{lemma}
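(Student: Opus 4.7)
The plan is to show separately that (i) every $P$-orbit has length at least $n_p$, and (ii) some $P$-orbit has length exactly $n_p$.

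For (i), fix $\omega \in \Omega$. Since $G$ is transitive, $|G_\omega| = |G|/n$, so the $p$-part satisfies $|G|_p = n_p \cdot |G_\omega|_p$. Now $P_\omega = P \cap G_\omega$ is a $p$-subgroup of $G_\omega$, hence $|P_\omega| \leq |G_\omega|_p$. Therefore
\[
|\omega^P| \;=\; |P : P_\omega| \;=\; \frac{|P|}{|P_\omega|} \;\geq\; \frac{|G|_p}{|G_\omega|_p} \;=\; n_p,
\]
which is the desired lower bound.

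For (ii), I would use a counting argument. Each $P$-orbit has length a power of $p$; combined with (i), every orbit length is a $p$-power that is at least $n_p$, hence is a multiple of $n_p$. Writing $n = n_p \cdot m$ with $\gcd(m,p) = 1$ and expressing each orbit length as $n_p \cdot p^{k_i}$ for some $k_i \geq 0$, the orbit-decomposition identity $\sum_i n_p \cdot p^{k_i} = n = n_p \cdot m$ gives $\sum_i p^{k_i} = m$. Since $m$ is coprime to $p$, the sum on the left cannot have every term divisible by $p$, so at least one $k_i$ equals $0$. The corresponding orbit has length exactly $n_p$, completing the proof.

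There is no real obstacle here: step (i) is a direct Sylow/index computation, and step (ii) is the standard $p$-adic parity argument that each power of $p$ in the orbit decomposition must have exponent at least that of $n_p$, with equality forced in at least one term by the $p'$-part of $n$. This is exactly the content of Wielandt's theorem, and the proof generalises verbatim from the permutation-group setting to arbitrary transitive actions because only the index $|G : G_\omega| = n$ and the Sylow bound $|P_\omega| \leq |G_\omega|_p$ are used.
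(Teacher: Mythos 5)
Your proof is correct, and the two halves split the work the same way the paper does, but your argument for the \emph{existence} of an orbit of length exactly $n_p$ is genuinely different. The paper's route is structural: it takes $Q\in\Syl_p(G_\omega)$, embeds it in a Sylow $p$-subgroup $P'$ of $G$ via Sylow's theorem, observes that $P'_\omega=Q$, and reads off an orbit of length $|P':Q|=|G|_p/|G_\omega|_p=n_p$, transferring to an arbitrary $P$ by conjugacy of Sylow subgroups; the lower bound is then deduced by comparing $|P_\omega|$ with $|Q|$. You instead prove the lower bound first by the direct index computation $|P_\omega|\le|G_\omega|_p$ (essentially the same inequality the paper uses, just packaged differently), and then obtain existence by the $p$-adic counting argument: every orbit length is a $p$-power $\ge n_p$, hence a multiple of $n_p$, and the identity $\sum_i p^{k_i}=m$ with $p\nmid m$ forces some $k_i=0$. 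Your counting argument is more elementary for the existence step, needing only Lagrange and the orbit decomposition rather than the full Sylow machinery; the paper's argument buys slightly more information, namely that the points in orbits of minimal length are exactly those $\omega$ for which $P_\omega$ is a Sylow $p$-subgroup of $G_\omega$, which is the observation the rest of the paper (in particular Lemma~\ref{lem:easy}) is built on. Both arguments are valid for arbitrary, not necessarily faithful, transitive actions.
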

\begin{proof}
Let $\omega\in\Omega$ and let $Q\in\Syl_p(G_\omega)$. Then by Sylow's Theorems, there exists a Sylow $p$-subgroup $P'$ of $G$ such that $Q\leqslant P'$. Then $Q=P'\cap G_\omega=P'_{\omega}$ and so $P'$ has an orbit of length $|P':Q|=|G|_p/|G_\omega|_p=n_p|G_\omega|_p/|G_\omega|_p=n_p$. Since all Sylow $p$-subgroups are conjugate in 
 $G$ it follows that $P$ has an orbit of length $n_p$.
Moreover,  if $P$ has a shorter orbit $\omega^P$ then $|P_\omega|=|P|/|\omega^P|> |P|/n_p=|Q|$, contradicting $|Q|$ being the order of a Sylow $p$-subgroup of a point stabiliser. 
\end{proof}

This leads to the following easy test for Property $(*)_p$.

\begin{lemma}\label{lem:easy}
Let $G$ act transitively on $\Omega$, let $p$ be a prime, and let $P\in\Syl_p(G)$. Then  Property~$(*)_p$ holds if and only if all orbits of $P$ on $\Omega$ have the same length, and that length is  $n_p$.
\end{lemma}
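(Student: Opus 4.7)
The proof will be a short elementary computation combining orbit--stabiliser with Lemma~\ref{lem:shortest}. The key identity to exploit is that, since $G$ is transitive on $\Omega$, orbit--stabiliser gives $|G|=|G_\omega|\cdot n$ and hence
\[
|G_\omega|_p \;=\; \frac{|G|_p}{n_p} \;=\; \frac{|P|}{n_p},
\]
so the order of a Sylow $p$-subgroup of a point stabiliser is $|P|/n_p$, a quantity independent of $\omega$.

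With this in hand, I would first fix $\omega\in\Omega$ and observe that $P_\omega=P\cap G_\omega$ is a $p$-subgroup of $G_\omega$. Therefore $P_\omega\in\Syl_p(G_\omega)$ if and only if $|P_\omega|=|G_\omega|_p=|P|/n_p$. Applying orbit--stabiliser inside $P$ gives $|P_\omega|=|P|/|\omega^P|$, so the Sylow condition at $\omega$ is equivalent to $|\omega^P|=n_p$. Hence Property~$(*)_p$ holds if and only if every $P$-orbit on $\Omega$ has length exactly $n_p$.

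To finish, I would invoke Lemma~\ref{lem:shortest}, which asserts that the minimum $P$-orbit length equals $n_p$. Consequently, every $P$-orbit having length $n_p$ is the same as saying that all $P$-orbits have the same length (that common length being forced to equal the minimum $n_p$). This yields the stated equivalence. There is no real obstacle here; the main care is simply to apply orbit--stabiliser both in $G$ and in $P$ and to note that $|G_\omega|_p$ takes the clean form $|P|/n_p$.
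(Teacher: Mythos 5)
Your proof is correct and takes essentially the same route as the paper: both arguments combine orbit--stabiliser (giving $|G_\omega|_p=|G|_p/n_p=|P|/n_p$ and $|P_\omega|=|P|/|\omega^P|$) with Lemma~\ref{lem:shortest} to translate the Sylow condition at each point into the condition that every $P$-orbit has length $n_p$. Nothing is missing.
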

\begin{proof}
By Lemma \ref{lem:shortest}, $P$ has an orbit of length $n_p$. Also if $|\omega^P|=n_p$ then $|P_\omega|=|P|/n_p=|G|_p/n_p=|G_\omega|_p$. Hence all orbits of $P$ on $\Omega$ have the same length if and only if $P_\omega$ has order $|G_\omega|_p$ for all $\omega\in\Omega$, that is, if and only if $P_\omega\in\Syl_p(G_\omega)$ for all~$\omega\in\Omega$.
\end{proof}

It is easy to see from Lemma \ref{lem:easy} combined with Lemma \ref{lem:allsame} that, if $G$ acts on $\Omega$ with Property $(*)_p$, then the permutation group $G^{\Omega}\leqslant\Sym(\Omega)$ induced by $G$ also has Property $(*)_p$. We collect some other easy observations in the following lemma.

\begin{lemma}\label{lem:elementary}
Let $G$ act transitively on a set $\Omega$ of size $n$, let $\omega\in\Omega$ and let $p$ be a prime.
\begin{enumerate}
\item If $p$ does not divide $|G_\omega|$ then $G$ has Property~$(*)_p$.
\item If $G$ acts faithfully on $\Omega$ with Property~$(*)_p$ such that $p$ divides $|G|$, then $p$ divides $n$.
\item If $|\Omega|=p^k$ then $G$ has Property~$(*)_p$ but not Property $(*)_r$ for any prime $r\neq p$ which divides $|G^\Omega|$, where $G^\Omega$ is the permutation group on $\Omega$ induced by $G$.
\item If $pn_p>n$ then $G$ has Property~$(*)_p$.
\item If $H$ is a transitive subgroup of $G$ and $G$ has Property~$(*)_p$ then $H$ has Property~$(*)_p$.
\end{enumerate}
\end{lemma}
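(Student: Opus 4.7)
The unifying idea is to combine Lemma~\ref{lem:easy} (Property~$(*)_p$ is equivalent to every Sylow $p$-orbit on $\Omega$ having length $n_p$) with Lemma~\ref{lem:shortest} (the shortest such orbit has length $n_p$) and the elementary Sylow bound $|P_\omega|\leq|G_\omega|_p$, which holds because $P_\omega\leq G_\omega$ is a $p$-subgroup. In each part the plan is to pin down the orbit lengths of a convenient Sylow $p$-subgroup and then appeal to Lemma~\ref{lem:easy}.

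Parts (1), (2), (3) and (5) are routine under this blueprint. For (1) I would simply observe that $P_\omega$ is a $p$-subgroup of $G_\omega$, so $p\nmid|G_\omega|$ forces $P_\omega=1$, which is the (trivial) Sylow $p$-subgroup of $G_\omega$. For (2) I would argue by contraposition: if $p\nmid n$ then $n_p=1$, so Lemma~\ref{lem:easy} makes every $P$-orbit a fixed point, placing $P$ in the kernel of the action; faithfulness then forces $P=1$, contradicting $p\mid|G|$. For (3), the hypothesis $|\Omega|=p^k$ gives $n_p=|\Omega|$, so Lemma~\ref{lem:shortest} produces a $P$-orbit of full length and Lemma~\ref{lem:easy} yields $(*)_p$; for the second assertion I would use the observation stated just before Lemma~\ref{lem:elementary} that $G$ having $(*)_r$ implies $G^\Omega$ does, and then apply (2) to the faithful group $G^\Omega$ with $r\neq p$ to conclude $r\mid n=p^k$, a contradiction. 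For (5) I would choose $Q\in\Syl_p(H)$ and embed $Q\leq P\in\Syl_p(G)$, noting that an order count gives $Q=P\cap H$; each $Q$-orbit then sits inside a $P$-orbit of length $n_p$, while Lemma~\ref{lem:shortest} applied inside the transitive group $H$ supplies the matching lower bound $n_p$, so every $Q$-orbit has length exactly $n_p$ and Lemma~\ref{lem:easy} finishes.

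The main (indeed, only genuine) obstacle is part~(4). The strategy is to show that the hypothesis $pn_p>n$ forces every $P$-orbit on $\Omega$ to have length \emph{exactly} $n_p$. The Sylow bound $|P_\omega|\leq|G_\omega|_p$ yields
\[
|\omega^P|=|P|/|P_\omega|\geq|P|/|G_\omega|_p=n_p
\]
for each $\omega\in\Omega$. Since $P$-orbit lengths are powers of $p$, the next admissible value above $n_p$ is $pn_p$, which by hypothesis exceeds $|\Omega|=n$ and is therefore excluded. Hence every $P$-orbit has length exactly $n_p$, and Lemma~\ref{lem:easy} delivers $(*)_p$.
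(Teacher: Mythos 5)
Your proposal is correct and follows essentially the same route as the paper: each part is settled by pinning down the Sylow orbit lengths via Lemma~\ref{lem:shortest} and Lemma~\ref{lem:easy}, with (4) resolved exactly as in the paper by noting that an orbit length strictly exceeding $n_p$ would be at least $pn_p>n$. The only cosmetic differences are that in (3) you make the reduction to the faithful group $G^\Omega$ explicit where the paper just cites part (2), and in (4) you rederive the lower bound $|\omega^P|\geq n_p$ from the Sylow bound rather than quoting Lemma~\ref{lem:shortest} directly.
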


\begin{proof}\leavevmode
\begin{enumerate}
\item If $p$ does not divide $|G_\omega|$ for some $\omega$ then $p$ does not divide $|G_\omega|$ for all $\omega$. Since $P_\omega$ is a $p$-subgroup of $G_\omega$ it follows that $P_\omega=1$ for all $\omega\in\Omega$ and is a Sylow $p$-subgroup of $G_\omega$. 
\item Suppose that $G$ has Property~$(*)_p$ with $p$ coprime to $n$. Then, by Lemma~\ref{lem:easy}, all orbits of a Sylow $p$-subgroup of $G$ have length $n_p=1$. Since $G$ acts faithfully on $\Omega$ it follows that a Sylow $p$-subgroup of $G$ is trivial and so $p$ does not divide $|G|$.
\item By Lemma \ref{lem:shortest} a Sylow $p$-subgroup is transitive and so Property~$(*)_p$ holds while the rest of the assertion follows from part (2).
\item By Lemma \ref{lem:shortest}, $P$ has an orbit of length $n_p$ and all orbits of $P$ have length at least $n_p$. Since orbits of $P$ have length a power of $p$ and $pn_p>n$ it follows that all orbits of $P$ have length $n_p$ and so by Lemma \ref{lem:easy} we have that $G$ has Property~$(*)_p$.
\item Let $P\in\Syl_p(H)$. Then by Lemma \ref{lem:shortest}, all orbits of $P$ have size at least $n_p$. However, since $G$ has Property~$(*)_p$ and $P$ is contained in a Sylow $p$-subgroup of $G$, Lemma \ref{lem:easy} implies that every orbit of $P$ has size at most $n_p$. Thus all orbits of $P$ have length $n_p$ and the result follows from applying Lemma \ref{lem:easy} again.
\qedhere
\end{enumerate}
\end{proof}

\section{Imprimitive groups}\label{section:imprimitive}

In this section, we investigate imprimitive permutation groups with Property $(*)_p$. Suppose that  $G$ is an imprimitive group with a non-trivial system of imprimitivity $\mathcal{B}$ and that $B \in \mathcal{B}$. As in Theorem~\ref{thm:imprim}, we let $G_B^B$ denote the permutation group induced on a block $B$ by the setwise stabiliser $G_B$, and $G^\mathcal{B}$ denote the permutation group induced by $G$ on $\mathcal{B}$. Then $G \leqslant G_B^B \Wr G^\mathcal{B}$ and the assumption that $\mathcal{B}$ is non-trivial ensures that both $G_B^B$ and $G^\mathcal{B}$ have strictly lower degree than $G$. Next we consider how Property $(*)_p$ interacts with these three groups.

\begin{lemma}\label{lem:imprimwreath}
Let $H$ act transitively on $\Omega$ and $K$ act transitively on $\Delta$. Then $H\Wr K$ acting imprimitively on $\Omega\times\Delta$ has Property~$(*)_p$ if and only if both $H$ and $K$ have Property~$(*)_p$.
\end{lemma}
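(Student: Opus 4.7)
The plan is to apply the characterisation of Property $(*)_p$ from Lemma~\ref{lem:easy} directly, after identifying a convenient Sylow $p$-subgroup of $H\Wr K$ and describing its orbits on $\Omega\times\Delta$ explicitly. Set $n=|\Omega|$ and $m=|\Delta|$, so that $|\Omega\times\Delta|=nm$ and $(nm)_p=n_p m_p$. Pick $P\in\Syl_p(H)$ and $Q\in\Syl_p(K)$. Since $|H\Wr K|=|H|^m|K|$, the $p$-part is $|P|^m|Q|=|P\Wr Q|$, so $P\Wr Q$ is a Sylow $p$-subgroup of $H\Wr K$; this is the Sylow subgroup I will use to test Property $(*)_p$.

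Next I will work out the orbits of $P\Wr Q$ on $\Omega\times\Delta$ under the imprimitive action $(\omega,\delta)^{(f;k)}=(\omega^{f(\delta)},\delta^k)$. Fixing a point $(\omega,\delta)$, letting $k\in Q$ vary moves the second coordinate through the entire $Q$-orbit $\delta^Q$, and, for each resulting second coordinate, the freedom to choose $f\in P^\Delta$ arbitrarily moves the first coordinate independently through $\omega^P$. Hence the $(P\Wr Q)$-orbit of $(\omega,\delta)$ is exactly the product $\omega^P\times\delta^Q$, of size $|\omega^P|\cdot|\delta^Q|$.

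With this in hand, both directions follow immediately from Lemmas~\ref{lem:shortest} and~\ref{lem:easy}. If $H$ and $K$ both have Property $(*)_p$, then every $P$-orbit on $\Omega$ has length $n_p$ and every $Q$-orbit on $\Delta$ has length $m_p$, so every $(P\Wr Q)$-orbit has length $n_p m_p=(nm)_p$, and Lemma~\ref{lem:easy} gives Property $(*)_p$ for $H\Wr K$. Conversely, if $H\Wr K$ has Property $(*)_p$, then every product $|\omega^P|\cdot|\delta^Q|$ equals $n_p m_p$; choosing $\delta$ with $|\delta^Q|=m_p$ (which exists by Lemma~\ref{lem:shortest}) forces $|\omega^P|=n_p$ for every $\omega\in\Omega$, so $H$ has Property $(*)_p$ by Lemma~\ref{lem:easy}, and symmetrically $K$ does.

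There is no serious obstacle here; the only point that needs a little care is the orbit computation in the second step, ensuring that $P^\Delta$ acts independently on each coordinate so that the $(P\Wr Q)$-orbit genuinely splits as a Cartesian product $\omega^P\times\delta^Q$. Once this is verified, the equivalence is a direct counting argument via Lemma~\ref{lem:easy}.
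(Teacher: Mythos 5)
Your proposal is correct and follows essentially the same route as the paper: both identify $P\Wr Q=P^{|\Delta|}\rtimes Q$ as a Sylow $p$-subgroup of $H\Wr K$ and then apply Lemma~\ref{lem:easy} together with the lower bound from Lemma~\ref{lem:shortest}, the only cosmetic difference being that you phrase everything in terms of the orbit $\omega^P\times\delta^Q$ while the paper computes the stabiliser $(P_\omega\times P^{k-1})\rtimes Q_\delta$ for the forward direction. The orbit decomposition you flag as the delicate point is indeed valid, since $f(\delta)$ ranges over all of $P$ independently of the $K$-component, so no gap remains.
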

\begin{proof}
Let $G=H\Wr K=H^k\rtimes K$ where $k=|\Delta|$ and let $\Sigma=\Omega\times\Delta$.  Then $R=P^k\rtimes Q$ is a Sylow $p$-subgroup of $G$, where $P$ is a Sylow $p$-subgroup of $H$ and $Q$ is a Sylow $p$-subgroup of $K$.  Let $\omega\in\Omega$ and $\delta\in\Delta$. Then $G_{(\omega,\delta)}=(H_{\omega}\times H^{k-1})\rtimes K_\delta$ and $R_{(\omega,\delta)}=(P_{\omega}\times P^{k-1})\rtimes Q_\delta$. 

If $H$ and $K$ both have Property~$(*)_p$, then $|P_\omega|$ does not depend on the choice of $\omega$ and $|Q_\delta|$ does not depend on the choice of $\delta$. Thus $|R_{(\omega,\delta)}|$ is constant and so Lemma \ref{lem:easy} implies that $G$ has Property~$(*)_p$.

Conversely, suppose that $G$ has Property~$(*)_p$. Then by Lemma \ref{lem:easy}, $|R_{(\omega,\delta)}|$ does not depend on $(\omega,\delta)$ and $|(\omega,\delta)^R|= |\Sigma|_p=|\Omega|_p|\Delta|_p$.
Now $|(\omega,\delta)^R|=|\omega^P||\delta^Q|$. By Lemma \ref{lem:shortest} we have that $|\omega^P|\geqslant |\Omega|_p$ and $|\delta^Q|\geqslant |\Delta|_p$. Thus, as $|(\omega,\delta)^R|=|\Omega|_p|\Delta|_p$ for all $\omega\in\Omega$ and $\delta\in\Delta$ it follows that $|\omega^P|= |\Omega|_p$ for all $\omega\in\Omega$ and $|\delta^Q|=|\Delta|_p$ for all $\delta\in\Delta$. Hence both $H$ and $K$ have Property~$(*)_p$.
\end{proof}

\begin{corollary}\label{cor:imprimwreath}
Let $G$ be an imprimitive group with system of imprimitivity $\mathcal{B}$. Let $B$ be a block of $\mathcal{B}$. Let $H=G_B^B$ be the permutation group induced on $B$ by the setwise stabiliser $G_B$, and let $K=G^{\mathcal{B}}$ be the group induced by $G$ on $\mathcal{B}$. If $H$ and $K$ have Property~$(*)_p$, then $G$ also has Property~$(*)_p$. 
\end{corollary}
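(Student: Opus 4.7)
The plan is to deduce this as a direct consequence of Lemma~\ref{lem:imprimwreath} combined with Lemma~\ref{lem:elementary}(5). The key observation is the standard embedding of an imprimitive group into a wreath product: if we identify $\Omega$ with $B \times \mathcal{B}$ (for example, by fixing for each block $B'\in\mathcal{B}$ a bijection from $B$ to $B'$, which exists since $G$ acts transitively on $\mathcal{B}$ and all blocks have the same size), then $G$ embeds as a transitive subgroup of $H \Wr K$ acting imprimitively on $B \times \mathcal{B}$, with the natural system of imprimitivity coinciding with $\mathcal{B}$ under this identification.

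First I would make that embedding precise: under the identification $\Omega \leftrightarrow B \times \mathcal{B}$, the action of $G$ on $\Omega$ is permutationally equivalent to the inherited action of $G$ as a subgroup of $H \Wr K$ on $B \times \mathcal{B}$. Since $G$ is transitive on $\Omega$, this makes $G$ a transitive subgroup of $H \Wr K$ on $B \times \mathcal{B}$.

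Next, since both $H$ and $K$ are assumed to have Property~$(*)_p$, Lemma~\ref{lem:imprimwreath} (applied with $\Omega$ there being $B$ and $\Delta$ there being $\mathcal{B}$) yields that $H \Wr K$, acting on $B \times \mathcal{B}$, has Property~$(*)_p$. Finally, Lemma~\ref{lem:elementary}(5) says that Property~$(*)_p$ passes to transitive subgroups, so $G$ has Property~$(*)_p$, as required.

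There is no real obstacle here beyond ensuring that the wreath-product embedding is set up correctly so that the hypotheses of Lemma~\ref{lem:imprimwreath} match the data of the corollary; once the identification $\Omega \leftrightarrow B \times \mathcal{B}$ is fixed, the result is essentially a one-line invocation of the two preceding lemmas.
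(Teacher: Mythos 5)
Your proposal is correct and follows exactly the paper's own proof: embed $G$ as a transitive subgroup of $H\Wr K$, apply Lemma~\ref{lem:imprimwreath} to see that $H\Wr K$ has Property~$(*)_p$, and then invoke Lemma~\ref{lem:elementary}(5). The extra care you take in making the identification $\Omega\leftrightarrow B\times\mathcal{B}$ explicit is a fine addition but not a different argument.
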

\begin{proof}
The wreath product $H\Wr K$ has Property~$(*)_p$ by Lemma \ref{lem:imprimwreath}.
Since $G$ is a subgroup of $H\Wr K$, the statement follows immediately from Lemma \ref{lem:elementary}(5). 
\end{proof}

We have the following partial converse to Corollary \ref{cor:imprimwreath}.

\begin{lemma}\label{lem:GBB}
Suppose that $G$ acts transitively on $\Omega$ with Property $(*)_p$, that it is imprimitive with system of imprimitivity $\mathcal{B}$, and that $B\in\mathcal{B}$. Then $G_B^B$ has Property $(*)_p$.
\end{lemma}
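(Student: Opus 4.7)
My strategy is to push Property $(*)_p$ for $G$ down through the natural quotient $G_B \twoheadrightarrow G_B^B = G_B/G_{(B)}$, where $G_{(B)}$ denotes the pointwise stabiliser of the block $B$. Fix $\omega \in B$ and a Sylow $p$-subgroup $P$ of $G$. Two routine set-ups are needed: first, since $B$ is the unique block of $\mathcal{B}$ containing $\omega$, every $g \in G_\omega$ preserves $B$, so $G_\omega \leq G_B$; second, $G_{(B)}$ is normal in $G_B$ and lies inside $G_\omega$, so $G_\omega/G_{(B)} = (G_B^B)_\omega$. Unwinding the definition of the induced action, the image of $P_\omega$ in $G_B^B$ is precisely $(P_B^B)_\omega = P_\omega G_{(B)}/G_{(B)}$.

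Property $(*)_p$ for $G$ yields that $P_\omega$ is a Sylow $p$-subgroup of $G_\omega$. Since Sylow $p$-subgroups are preserved under quotients by normal subgroups, the image $P_\omega G_{(B)}/G_{(B)} = (P_B^B)_\omega$ is a Sylow $p$-subgroup of $(G_B^B)_\omega$. To promote this to Property $(*)_p$ for $G_B^B$, I would embed the $p$-subgroup $P_B^B$ in some $P^* \in \Syl_p(G_B^B)$. For each $\omega \in B$, the $p$-subgroup $(P^*)_\omega$ of $(G_B^B)_\omega$ contains the Sylow $p$-subgroup $(P_B^B)_\omega$, forcing $(P^*)_\omega = (P_B^B)_\omega$ by maximality. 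Thus $(P^*)_\omega$ is Sylow in $(G_B^B)_\omega$ for every $\omega \in B$, which is exactly Property $(*)_p$ for $G_B^B$.

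The point requiring care is the last step: one cannot assume that $P_B^B$ is already a Sylow $p$-subgroup of $G_B^B$. In general the orbit $B^P$ in $\mathcal{B}$ can be strictly longer than $|\mathcal{B}|_p$, in which case $|P_B|$ falls short of $|G_B|_p$ and $P_B^B$ is not Sylow in $G_B^B$. The Sylow-extension trick is what handles this mismatch, without requiring Property $(*)_p$ for $G^{\mathcal{B}}$ (which, by part (3) of Theorem~\ref{thm:imprim}, can genuinely fail). Everything else reduces to the standard identification of stabilisers and kernels under the induced action.
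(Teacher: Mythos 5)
Your proof is correct, but it takes a genuinely different route from the paper's. The paper argues in the opposite direction: it starts with a Sylow $p$-subgroup $P$ of $G_B$, extends it to a Sylow $p$-subgroup $Q$ of $G$ (so that $Q_B=P$), and then, in the non-trivial case $P<Q$, uses the factorisation $|\alpha^Q|=|\alpha^P|\,|B^Q|$ of orbit lengths across the block system together with the equal-orbit-lengths criterion (Lemma~\ref{lem:easy}) to conclude that all $P$-orbits on $B$ have the same length. You instead start from a Sylow $p$-subgroup of $G$, push the point stabilisers $P_\omega$ through the quotient $G_B\to G_B/G_{(B)}$ (using that surjective homomorphisms carry Sylow $p$-subgroups to Sylow $p$-subgroups), and then repair the fact that $P_B^B$ need not be Sylow in $G_B^B$ by embedding it in some $P^*\in\Syl_p(G_B^B)$ and using maximality of the stabilisers $(P_B^B)_\omega$. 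Your diagnosis of the delicate point is exactly right: $B^P$ can be longer than $|\mathcal{B}|_p$, which is the same phenomenon behind Theorem~\ref{thm:imprim}(3), and your Sylow-extension step handles it cleanly. The paper's approach has the advantage of running entirely through its workhorse Lemma~\ref{lem:easy} on orbit lengths; yours is more self-contained, working directly from Definition~\ref{def} without any orbit counting, and makes transparent why the possible failure of $(*)_p$ for $G^{\mathcal{B}}$ is irrelevant.
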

\begin{proof}
Suppose that $G$ has Property $(*)_p$ and let $P\in\Syl_p(G_B)$. By Sylow's Theorems there exists a Sylow $p$-subgroup $Q$ of $G$ that contains $P$.  Thus $Q_B=P$. Suppose first that  $Q=P$.  Since $Q_\omega\in\Syl_p(G_\omega)$ for all $\omega\in \Omega$ and $(G_B)_\omega=G_\omega$, then $Q_\omega\in\Syl_p((G_B)_\omega)$ for all $\omega \in B$. Hence $G_B^B$ has Property $(*)_p$. Thus from now on suppose that $P<Q$.  Let $\alpha\in B$. Then  $Q$ acts imprimitively on $\alpha^Q$ with block $\alpha^Q \cap B=\alpha^P.$ Suppose that $B_1\in \mathcal{B}$ and there exists $q\in Q$ such that $B^q=B_1$. Then $\alpha^q\in B_1$ and so $B_1\cap\alpha^Q\neq \varnothing.$ Hence $|\alpha^Q|=|\alpha^P||B^Q|$.  Since $G$ has Property $(*)_p$ on $\Omega$, Lemma \ref{lem:easy} implies that $|\alpha^Q|$ is independent of the choice of $\alpha\in B$ and hence so is  $|\alpha^P|$. Thus by Lemma \ref{lem:easy}, $G_B$ has Property $(*)_p$ on $B$.
\end{proof}

We note that the full converse of Corollary \ref{cor:imprimwreath} does not hold.  For example, let $G=A_5$ acting on 20 points. Then $G$ has Property $(*)_2$, by Lemma~\ref{lem:elementary}(1), and $G$ acts imprimitively with a system of imprimitivity $\mathcal{B}$ consisting of 10 blocks of size 2.  However, $G^{\mathcal{B}}$ does not have Property $(*)_2$, by Lemma~\ref{lem:easy}, since a Sylow $2$-subgroup of $G$ has orbit lengths $2, 2, 2, 4$ in $\mathcal{B}$. We generalise this example below. Corollary \ref{cor:imprimwreath} and Lemma \ref{lem:GBB} then imply Theorem \ref{thm:imprim}.

\begin{lemma}\label{lem:imprimG^B}
Let $p$ be a prime, and let $G\leq \Sym(X)$ be a transitive permutation group of degree $d=kp+f$ with $0<f<p$. Let $P\in\Syl_p(G)$ and let $\fix_X(P)$ be the set of fixed points of $P$.  Assume that $|\fix_X(P)|=f$ and $P$ acts semiregularly on $X\setminus \fix_X(P)$.
Let $Y$ be an orbit of $G$ on the set of elements of $X^p$ with pairwise distinct entries. Then the natural action $G$ induces on $Y$ is faithful and imprimitive  with Property~$(*)_p$. Take $\mathcal{B}$ to be the partition of $Y$ where two $p$-tuples lie in the same block if they correspond to the same unordered $p$-set. If the blocks in this partition have size a multiple of $p$, then $G^{\mathcal{B}}$ does not have Property~$(*)_p$.
\end{lemma}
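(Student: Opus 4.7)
The plan is to verify the four claims in turn: faithfulness of the $Y$-action, imprimitivity of the given partition $\mathcal{B}$, Property~$(*)_p$ on $Y$, and its failure on $\mathcal{B}$. Faithfulness follows from transitivity of $G$ on $X$: the set of first coordinates of tuples in $Y$ is a non-empty $G$-invariant subset of $X$, hence all of $X$, so any element of $G$ pointwise fixing $Y$ must fix $X$ pointwise and so be trivial. The partition $\mathcal{B}$ is $G$-invariant because the ``underlying set'' map is $G$-equivariant; it is a non-trivial block system since blocks have size $\geq p \geq 2$ by the divisibility hypothesis, and $|\mathcal{B}|=1$ would force $|X|=p$, contradicting $0<f<p$.

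For Property~$(*)_p$ on $Y$, fix $P\in\Syl_p(G)$. Since $|\fix_X(P)|=f<p$ and each $y=(y_1,\ldots,y_p)\in Y$ has $p$ distinct entries, at least one $y_i$ lies in $X\setminus\fix_X(P)$, so semiregularity gives $P_{y_i}=1$ and hence $P_y=1$. Every $P$-orbit on $Y$ thus has size $|P|$, and Lemma~\ref{lem:easy} gives Property~$(*)_p$, together with $|Y|_p=|P|$.

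The crux is showing $G^{\mathcal{B}}$ fails Property~$(*)_p$. Suppose it holds, and let $b$ denote the common block size. From $|Y|=b|\mathcal{B}|$ and $p\mid b$ we obtain $|\mathcal{B}|_p=|P|/b_p\leq|P|/p<|P|$, and Lemma~\ref{lem:easy} applied to $G^{\mathcal{B}}$ forces every $P$-orbit on $\mathcal{B}$ to have size $|\mathcal{B}|_p<|P|$. I will contradict this by exhibiting $S\in\mathcal{B}$ with $P_S=1$, which yields a $P$-orbit of size $|P|$ on $\mathcal{B}$. To find such an $S$, pick any $x_0\in\fix_X(P)$ (non-empty since $f\geq 1$) and use transitivity of $G$ on $X$ to translate some block so that it contains $x_0$.

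The main obstacle is verifying $P_S=1$ for this $S$. If $g\in P_S\setminus\{1\}$ has order $p^k$, then $S$ is a union of $\langle g\rangle$-orbits on $X$. Semiregularity of $P$ on $X\setminus\fix_X(P)$ implies $\fix_X(g)=\fix_X(P)$, so $\langle g\rangle$-orbits are singletons on $\fix_X(P)$ and of size $p^k\geq p$ elsewhere. Writing $s_0=|S\cap\fix_X(P)|$, we have $1\leq s_0\leq f<p$, so the remaining $p-s_0\in\{1,\ldots,p-1\}$ points of $S$ would need to form a non-empty union of $\langle g\rangle$-orbits of size at least $p$, which is impossible. Hence $P_S=1$, the contradiction is immediate, and the lemma follows.
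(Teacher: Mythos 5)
Your proof is correct and follows essentially the same route as the paper's: the same semiregularity argument gives all $P$-orbits on $Y$ length $|P|$, the same count gives $|\mathcal{B}|_p\le |P|/p$, and the same witness --- a block meeting $\fix_X(P)$ --- is shown to have trivial stabiliser in $P$ because a nontrivial $p$-element cannot move the fewer than $p$ non-fixed points of that block in orbits of size at least $p$. (The paper reaches $P_S=1$ slightly differently, by first deducing that such a $g$ fixes a tuple in the block and then invoking semiregularity of $P$ on $Y$, but this is a cosmetic variation on your direct use of semiregularity on $X\setminus\fix_X(P)$.)
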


\begin{proof}
Since $P$ is a $p$-group, all its orbits on $X$ have size a power of $p$, so $P$ fixes at least $f$ points. We assume $P$ fixes exactly $f$ points. By definition $G$ acts transitively  on $Y$, and it is clear that $\mathcal{B}$ is a $G$-invariant partition of $Y$. Since $G$ is transitive on $X$, every element of $X$ occurs as the first element of a $p$-tuple in $Y$, so the subgroup of $G$ fixing each $p$-tuple in $Y$ fixes $X$ pointwise. Thus $G$ acts faithfully on $Y$.
To show that $G$ acts on $Y$ with Property~$(*)_p$, we need to show that all $P$-orbits on $Y$ have the same size. Consider the $p$-tuple $(x_1,x_2,\ldots,x_p)\in Y$. Since $|\fix_X(P)|=f<p$, there is an index $i$ such that $x_i\not\in \fix_X(P)$. Since $P$ acts semiregularly on $X\setminus \fix_X(P)$, $x_i^P$ has size $|P|$, and hence  $(x_1,x_2,\ldots,x_p)^P$ also has size $|P|$.  Therefore $G$ on $Y$ has Property~$(*)_p$ and $|P|=|Y|_p$ by  Lemma \ref{lem:easy}.

Let $K$ be the kernel of the action of $G$ on $\mathcal{B}$, so that the induced group $G^{\mathcal{B}}\cong G/K$. Now $G^{\mathcal{B}}$ is the natural action of $G$ on an orbit of unordered $p$-sets of $X$ (the subsets that correspond to  $p$-tuples in $Y$).
Let $Q=P^\mathcal{B}=PK/K$, so $Q\in\Syl_p(G^{\mathcal{B}})$. Then $Q\cong P/(P\cap K)$, and in particular $|Q|\leq |P|$.

Now assume each block of $\mathcal{B}$ has size $\ell p$ for some integer $\ell\geq 1$. 
 Note that $|\mathcal{B}|=|Y|/(\ell p)$, so $|\mathcal{B}|_p=|Y|_p/(p\,\ell_p)\leq |Y|_p/p= |P|/p$. If $G^{\mathcal{B}}$ had Property~$(*)_p$, then by Lemma \ref{lem:easy} all $Q$-orbits on ${\mathcal{B}}$ would have size $|\mathcal{B}|_p\leq |P|/p$. We will display an orbit of size $|P|$, yielding a contradiction (and also implying $P\cap K=1$).  
 Since $G$ is transitive on $X$ and since $f>0$, $Y$ contains a $p$-tuple $x=(x_1,x_2,\ldots,x_p)$ with  $x_p\in \fix_X(P)$. Further, since $f<p$, not all the $x_i$ are in $\fix_X(p)$.
 Consider the corresponding $p$-set $\widehat{x}:=\{x_1,x_2,\ldots,x_p\}$ where (without loss of generality) $x_1,x_2,\ldots,x_{p-a}$ are not fixed by $P$ and $x_{p-a+1},\ldots,x_{p}$ lie in $\fix_X(P)$, so $0<a\leq f<p$. 
Let $g \in P$  fix $\widehat{x}$ setwise.  Then $g$ fixes each of the points $x_{p-a+1},\ldots,x_{p}$ of $\fix_X(P)$, and also fixes $\{x_1,x_2,\ldots,x_{p-a}\}$ setwise. Since $g$ is a $p$-element and $0<p-a<p$ it follows that $g$ fixes 
each $x_i$ for $1\leq i\leq p$. Thus $g$ fixes the $p$-tuple $x=(x_1,\dots,x_p)\in Y$. We have already shown that each $P$-orbit in $Y$ has size $|P|$, and so we conclude that $g=1$. Thus the only element of $P$ fixing  $\widehat{x}$ setwise is the identity, so the $P$-orbit in $\mathcal{B}$ containing $\widehat{x}$ has size $|P|$. This completes the proof.
\end{proof}

If we take $G=A_5$ acting on a set $X$ of size $5$, then a Sylow $2$-subgroup $P$ is isomorphic to the Klein four-group, fixes one point $x$, and is semiregular on $X\setminus\{x\}$. Since $G$ is $2$-transitive on $X$, $G$ has a single orbit $Y$ on the set of ordered pairs with distinct entries from $X$. Thus the blocks in $\mathcal{B}$ have size $2$. All the conditions in Lemma \ref{lem:imprimG^B} hold, and this yields the example with $A_5$ of degree $20$ mentioned above. 

\begin{corollary}\label{cor:imprimegs}
Let $p$ be a prime.  There are infinitely many imprimitive groups $G$ that have Property~$(*)_p$ for which the induced action on some $G$-invariant block system does not have Property~$(*)_p$.
\end{corollary}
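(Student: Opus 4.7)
The plan is to exhibit an explicit infinite family of base groups satisfying the hypotheses of Lemma \ref{lem:imprimG^B}, one family working uniformly for every prime $p$. The natural choice is $\{G_k:=\PSL_2(p^k) : k\geq 1\}$, each acting in the usual way on the set $X_k$ of $p^k+1$ points of the projective line over $\GF(p^k)$.

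First, I would verify the hypotheses of Lemma \ref{lem:imprimG^B} for each $G_k$. The Sylow $p$-subgroup $P_k$ of $G_k$ is the elementary abelian unipotent Borel subgroup of order $p^k$; it fixes the point at infinity and acts regularly on the remaining $p^k$ affine points. Writing $|X_k|=p^k+1=1\cdot p^k + 1$ gives $f=1$ with $0<f<p$, $|\fix_{X_k}(P_k)|=1=f$, and the required semiregularity of $P_k$ on $X_k\setminus\fix_{X_k}(P_k)$.

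The main subtlety is selecting an orbit $Y_k$ of $G_k$ on the set of $p$-tuples in $X_k^p$ with distinct entries so that the corresponding blocks in $\mathcal{B}_k$ have size a multiple of $p$. For $p=2$, the $2$-transitivity of $G_k$ on $X_k$ gives a unique such orbit, and each block trivially has size $2=p$. For $p\geq 3$, I would fix a non-identity element $u\in P_k$ (necessarily of order $p$) and a $\langle u\rangle$-orbit $\widehat{x}=\{x_1,\ldots,x_p\}$ of size $p$ on $X_k\setminus\fix_{X_k}(P_k)$. Labelling so that $u$ cyclically permutes the $x_i$, I would take $Y_k$ to be the $G_k$-orbit of the ordered tuple $x=(x_1,\ldots,x_p)$. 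The block of $\mathcal{B}_k$ containing $x$ is then $\langle u\rangle$-invariant; since no ordering of $\widehat{x}$ can be fixed by $u$ (any such fixed ordering would force all entries equal), the block decomposes into $\langle u\rangle$-orbits of size $p$, and hence has cardinality divisible by $p$, as required.

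With the hypotheses verified, Lemma \ref{lem:imprimG^B} shows that each $G_k$ acts faithfully and imprimitively on $Y_k$ with Property $(*)_p$, while the induced action on the block system $\mathcal{B}_k$ fails Property $(*)_p$. Since $|\PSL_2(p^k)|$ is strictly increasing in $k$, the resulting permutation groups are pairwise non-isomorphic, yielding infinitely many examples for each prime $p$. The only genuinely non-routine point is the block-size divisibility for $p\geq 3$; the $\langle u\rangle$-argument above is designed exactly to handle that obstacle.
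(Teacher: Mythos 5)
Your proposal is correct and follows essentially the same route as the paper: the paper takes $\PGL_2(q)$ (with $q$ a power of $p$) acting on the projective line and the $G$-orbit of the tuple $(0,1,\dots,p-1)$, which is precisely an ordering of an orbit of an order-$p$ translation, and deduces $p\mid$ block size from the transitive action of the translations by $\mathbb{F}_p$ on that $p$-set. Your $\langle u\rangle$-orbit-counting argument for the block-size divisibility is just a rephrasing of the paper's index computation, so the two proofs are the same in substance.
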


\begin{proof}
There are infinitely many $p$-powers $q$, and for each $q$  we will construct an example.
Take $G=\PGL_2(q)$, in its natural action of degree $d=q+1$ on $\mathbb{F}_q\cup\{\infty\}$. The subgroup $P$ of translations (with unique fixed point $\infty$) is a Sylow $p$-subgroup of $G$ and is semiregular on $\mathbb{F}_q$.
Let $Y$ be the orbit under $G$ of the $p$-tuple $x=(0,1,2,\ldots,p-1)$ (corresponding to the subfield $\mathbb{F}_p$ of  $\mathbb{F}_q$). 
By Lemma \ref{lem:imprimG^B}, $G$ has a natural faithful imprimitive action on $Y$ which has Property~$(*)_p$. Take $\mathcal{B}$ to be the partition of $Y$ where two $p$-tuples lie in the same block if they correspond to the same unordered $p$-set. The size of a block of $\mathcal{B}$ is $|G_{\{0,1,2,\ldots,p-1\}}|/|G_{(0,1,2,\ldots,p-1)}|$. 
Now $G_{\{0,1,2,\ldots,p-1\}}$ contains the group of translations by elements of $\mathbb{F}_p$, and hence is transitive on $\{0,1,2,\ldots,p-1\}$. Thus the size of a block is divisible by $p$, and hence
$G^{\mathcal{B}}$ does not have Property~$(*)_p$ by Lemma \ref{lem:imprimG^B}.
\end{proof}

Note these are not the only examples. We display here a second family satisfying the hypotheses of Lemma \ref{lem:imprimG^B} (this family has only $p-2$ examples for the prime $p$). Take $n=p+f$ where $p$ is a prime and $1<f<p$. Let $G=A_n$ acting naturally on $X$ of size $n$ and let $P\in\Syl_p(G)$. Then $|\fix_X(P)|=f$, and $P$ is cyclic of order $p$ and hence semiregular on $X\setminus\fix_X(P)$. 
Now $G$ is transitive on the set $Y$ of $p$-tuples of distinct points; for any such $p$-tuple there are $p!$ tuples in $Y$ corresponding to the same underlying subset $\beta$, and $G_\beta$ is transitive on these $p!$ tuples since $n\geq p+2$.
Thus the action of $G$ on $Y$ is imprimitive with a system of imprimitivity $\mathcal{B}$ consisting of $\binom{n}{p}$ blocks of size $p!$, so all the conditions of Lemma \ref{lem:imprimG^B} hold and it follows that $G^Y$ has Property~$(*)_p$, but $G^{\mathcal{B}}$ does not have Property~$(*)_p$.

We also note that the examples produced by Lemma \ref{lem:imprimG^B} have $G\cong G^{\mathcal{B}}$. There are also examples of groups $G$ having Property $(*)_p$ such that $G\not\cong G^{\mathcal{B}}$ and $G^{\mathcal{B}}$ does not have Property~$(*)_p$. For example, let $G=D_{12}$ acting regularly on itself. Then $G$ has Property~$(*)_2$. Since $G$ has a subgroup of order 4, this action has a system of imprimitivity $\mathcal{B}$ with  blocks of size 4. Then $G^{\mathcal{B}}=S_3$, which does not have Property $(*)_2$ by Lemma \ref{lem:elementary}(2).

\section{Primitive groups}
\label{sec:primitive}

By the O'Nan-Scott Theorem, see for example \cite{LPS}, the finite primitive permutation groups can be categorised into five types: (i) Affine type, (ii) Almost simple type, (iii) Diagonal type, (iv) Product action type, and (v) Twisted wreath type. For more information about these types see \cite{Cameron} or \cite{DM}.  We discuss the last three types in the next three sections.

\subsection{Diagonal type groups}
We show that primitive groups of Diagonal type do not have Property $(*)_p$.
\begin{lemma}\label{lem:diagonal}
Let $G$ be a primitive group of Diagonal type acting primitively on a set $\Omega$ and let $p$ be a prime dividing $|\Omega|$. Then $G$ does not have Property~$(*)_p$.
\end{lemma}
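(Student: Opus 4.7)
The plan is to reduce to the socle and then produce two $P$-orbits of different sizes, which by Lemma~\ref{lem:easy} will rule out Property~$(*)_p$. Let $N = \soc(G) = T^k$ with $T$ a nonabelian simple group and $k \geq 2$, and identify $\Omega$ with the set of right cosets of the diagonal subgroup $D = \{(t,t,\ldots,t) : t \in T\}$, so that $N$ acts transitively on $\Omega$ and $N_\omega = D$ for $\omega = D$. By Lemma~\ref{lem:elementary}(5), it suffices to show that $N$ does not have Property~$(*)_p$, since $N$ is a transitive subgroup of $G$.

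Since $p$ divides $|\Omega| = |T|^{k-1}$, the prime $p$ divides $|T|$, so a Sylow $p$-subgroup $P_T$ of $T$ is nontrivial; hence $P := P_T^k \in \Syl_p(N)$, and $n_p = |T|_p^{k-1} = |P_T|^{k-1}$. First I would verify that the $P$-orbit of $\omega = D$ has length exactly $n_p$: indeed $P \cap D = \{(s,s,\ldots,s) : s \in P_T\}$ has order $|P_T|$, so $|\omega^P| = |P_T|^k / |P_T| = n_p$, consistent with Lemma~\ref{lem:shortest}.

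Next, for an arbitrary coset $gD$ with $g = (g_1,\ldots,g_k) \in T^k$, I would compute
\[
P \cap gDg^{-1} = \bigl\{(g_1 s g_1^{-1},\ldots,g_k s g_k^{-1}) : s \in \textstyle\bigcap_{i=1}^{k} g_i^{-1} P_T g_i\bigr\},
\]
so that $|P_{gD}| = |\bigcap_i g_i^{-1} P_T g_i|$. The key observation is that since $T$ is nonabelian simple, $P_T$ is a proper nontrivial subgroup and hence not normal in $T$, so $N_T(P_T) \lneq T$. Choosing $g_1 \in T \setminus N_T(P_T)$ and setting $g = (g_1,1,\ldots,1)$ gives $\bigcap_i g_i^{-1} P_T g_i = g_1^{-1} P_T g_1 \cap P_T \lneq P_T$. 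Consequently
\[
|(gD)^P| \;=\; \frac{|P|}{|g_1^{-1} P_T g_1 \cap P_T|} \;>\; \frac{|P_T|^k}{|P_T|} \;=\; |P_T|^{k-1} \;=\; n_p.
\]
Thus $P$ has orbits of different sizes on $\Omega$, so by Lemma~\ref{lem:easy} $N$ fails Property~$(*)_p$, and therefore so does $G$.

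The main step requiring care is setting up the diagonal action correctly and identifying the stabiliser of a general coset as $gDg^{-1}$; beyond that, the argument is essentially a calculation exploiting the non-normality of $P_T$ in the simple group $T$. Note that this argument works uniformly for all $k \geq 2$ and every prime $p$ dividing $|T|$, and in particular does not interact at all with the top group $G/N \leq \Out(T) \times S_k$, which is why reducing to the socle is the efficient approach.
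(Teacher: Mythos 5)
Your proof is correct, and it follows the same overall strategy as the paper: reduce via Lemma~\ref{lem:elementary}(5) to the socle $N=T^k$ acting on the cosets of the diagonal subgroup $D$, take the Sylow $p$-subgroup $P_T^k$, and derive a contradiction from the non-normality of $P_T$ in the simple group $T$. The execution differs in a worthwhile way, though. The paper decomposes $\widehat{Q}=Q^k$ as $Q_1\times Q_2$ with $Q_1=Q^{k-1}\times 1$ and $Q_2=1^{k-1}\times Q$, observes that the $Q_1$-orbit of $[t,1,\ldots,1]$ already has the full size $n_p$, deduces from Lemma~\ref{lem:easy} that the $Q_2$-orbit is contained in it, and unwinds this containment to conclude $Qt=tQ$ for all $t\in T$, i.e.\ $Q\norml T$. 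You instead compute the point stabiliser of a general coset directly as $P\cap gDg^{-1}$, identify its order as $\bigl|\bigcap_i g_i^{-1}P_Tg_i\bigr|$, and choose $g_1\notin N_T(P_T)$ to produce an orbit strictly longer than $n_p$ alongside the base orbit of length exactly $n_p$; this is a more transparent, purely stabiliser-theoretic route to the same contradiction, and it makes explicit that the failure of $(*)_p$ is witnessed already by a single conjugate of $P_T$. Two trivial points of hygiene: you announce right cosets but then work with left cosets $gD$ and stabiliser $gDg^{-1}$ (harmless, but pick one convention), and the step $g_1^{-1}P_Tg_1\cap P_T\lneq P_T$ deserves the one-line justification that two distinct subgroups of equal order cannot contain one another.
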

\begin{proof}
Suppose that $G$ has Property~$(*)_p$ and let $N=\soc(G)$. Then $N=T^k$ for some finite nonabelian simple group $T$ and integer $k\geqslant 2$. Let $D=\{(t,t,\ldots,t)\mid t\in T\}\leqslant N$. Then we may assume that $\Omega$ is the set of right cosets of $D$ in $N$. Letting $[t_1,t_2,\ldots,t_{k-1}]$ denote the coset $D(t_1,t_2,\ldots,t_{k-1},1)$ we have that $\Omega$ can be identified with $\{[t_1,t_2,\ldots,t_{k-1}]\mid t_1,\ldots,t_{k-1}\in T\}$. 

Let $p$ be a prime dividing $|\Omega|=|T|^{k-1}$ and so $p$ divides $|T|$.  Let $Q\in\Syl_p(T)$. Then $\widehat{Q}=Q^k$ is a Sylow $p$-subgroup of $N$ and by Sylow's Theorems, is contained in a Sylow $p$-subgroup $P$ of $G$. Now $\widehat{Q}=Q_1\times Q_2$ where $Q_1=Q^{k-1}\times 1$ and $Q_2=1^{k-1}\times Q$. For each $(g_1,\ldots,g_{k-1},1)\in Q_1$ and $t\in T$ we have that $[t,1,\ldots,1]^{(g_1,\ldots,g_{k-1},1)}=[tg_1,g_2,\ldots,g_{k-1}]$. Thus $[t,1,\ldots,1]^{Q_1}=\{[x_1,x_2,\ldots,x_{k-1}]\mid x_1\in tQ, x_2,\ldots,x_{k-1}\in Q\}$, which has size $|Q_1|=n_p$. By Lemma \ref{lem:elementary}(5), $N$ has Property~$(*)_p$ and so it follows from Lemma~\ref{lem:easy} that all orbits of $\widehat{Q}$ have size $n_p$, and hence that $[t,1,\ldots,1]^{\widehat{Q}}=[t,1,\ldots,1]^{Q_1}$. In particular, 
 $[t,1,\ldots,1]^{Q_2}\subseteq [t,1,\ldots,1]^{Q_1}$. Now $[t,1,\ldots,1]^{(1,1,\ldots,1,g)}=[g^{-1}t,g^{-1},\ldots,g^{-1}]$ and so $[t,1,\ldots,1]^{Q_2}=\{[x_1,x_2,x_2,\ldots,x_2]\mid x_1=x_2t,x_2\in Q\}$. Thus $Qt=tQ$ for all $t\in T$ and so $Q\norml T$, a contradiction.
\end{proof}

\subsection{Product action}

Let $k\geqslant 2$ be an integer and let $H\leqslant \Sym(\Delta)$ for a finite set $\Delta$. Then $\Sym(\Delta)\Wr S_k=\Sym(\Delta)^k\rtimes S_k$ acts on $\Omega=\Delta^k$ such that for each $(\delta_1,\delta_2,\ldots,\delta_k)\in\Omega$ we have 
$$(\delta_1,\delta_2,\ldots,\delta_k)^{(h_1,h_2,\ldots,h_k)}=(\delta_1^{h_1},\delta_2^{h_2},\ldots,\delta_k^{h_k}) \quad\quad \textrm{ for all $h_1,h_2,\ldots,h_k\in\Sym(\Delta)$}$$
and 
$$(\delta_1,\delta_2,\ldots,\delta_k)^\sigma=(\delta_{1\sigma^{-1}},\delta_{2\sigma^{-1}}  \ldots, \delta_{k\sigma^{-1}} )  \quad\quad \textrm{ for all $\sigma\in S_k$}.$$
Note that any  $G\leqslant \Sym(\Delta)\Wr S_k$ acts on $\{1,2,\ldots,k\}$ and the permutation group $G^{\{1,2,\ldots,k\}}$ induced is the projection of $G$ onto $S_k$. We denote the stabiliser in $G$ of 1 in this action by $G_1$. Note that we have a homomorphism from $G_1$ onto a subgroup of $\Sym(\Delta)$ given by
$$(h_1,h_2,\ldots,h_k)\sigma \mapsto h_1$$
and so the group $G_1$ acts naturally on $\Delta$.

We begin with a result about the full wreath product.

\begin{lemma}\label{lem:pcoprimefull}
Suppose that $p$ is a prime, $k$ is a positive integer  such that $k\geq 2$, and $K\leq S_k$ such that $|K|$ is coprime to $p$. Suppose also that $H$ acts transitively  on a set $\Delta$,  and
let $G=H\Wr K$ in product action on $\Omega=\Delta^k$. Then  $G$ has Property~$(*)_p$ on $\Omega$ if and only if  $H$ has Property~$(*)_p$ on $\Delta$.
\end{lemma}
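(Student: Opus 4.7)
The plan is to reduce everything to counting orbit sizes of a carefully chosen Sylow $p$-subgroup of $G$ and then applying Lemma~\ref{lem:easy}. The coprimality assumption on $|K|$ will force the Sylow $p$-subgroup into the base group, after which the product action structure makes the orbit sizes factorise nicely.

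First I would locate a convenient Sylow $p$-subgroup. Since $|G|=|H|^k|K|$ and $\gcd(|K|,p)=1$, we have $|G|_p=|H|_p^k$, and the normal base subgroup $H^k\trianglelefteqslant G$ has index $|K|$ coprime to $p$, so every Sylow $p$-subgroup of $G$ is contained in $H^k$. Picking $P\in\Syl_p(H)$, the subgroup $P^k\leqslant H^k$ has order $|H|_p^k=|G|_p$, and so $P^k\in\Syl_p(G)$.

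Next I would compute the $P^k$-orbits on $\Omega=\Delta^k$. Under the product action, $P^k$ acts coordinatewise, so the orbit of $(\delta_1,\ldots,\delta_k)$ is the product set $\delta_1^P\times\cdots\times\delta_k^P$, of size $\prod_{i=1}^k|\delta_i^P|$. By Lemma~\ref{lem:easy}, $G$ has Property~$(*)_p$ if and only if every such orbit has size $|\Omega|_p=|\Delta|_p^k$. By Lemma~\ref{lem:shortest}, each factor satisfies $|\delta_i^P|\geqslant |\Delta|_p$, so the product equals $|\Delta|_p^k$ precisely when each factor equals $|\Delta|_p$.

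Running $(\delta_1,\ldots,\delta_k)$ over all of $\Omega$, this condition is equivalent to every $P$-orbit on $\Delta$ having length $|\Delta|_p$, and by Lemma~\ref{lem:easy} again this is equivalent to $H$ having Property~$(*)_p$ on $\Delta$. Both directions of the claimed equivalence follow at once. There is no serious obstacle; the only real content is the observation that $|K|$ coprime to $p$ confines all Sylow $p$-subgroups of $G$ to the base group, together with the multiplicative matching of the lower bound $|\Delta|_p$ on each coordinate-orbit against the target $|\Delta|_p^k$.
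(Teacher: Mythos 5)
Your proposal is correct and follows essentially the same route as the paper: both identify $P^k$ as a Sylow $p$-subgroup of $G$ using the coprimality of $|K|$ to $p$, and both combine Lemma~\ref{lem:easy} with the lower bound from Lemma~\ref{lem:shortest} to force each coordinate factor of the orbit (equivalently, of the stabiliser, which is what the paper computes) to equal $|\Delta|_p$. The only cosmetic difference is that you phrase the argument in terms of orbit lengths $|\delta_i^P|$ where the paper works with the stabiliser $R_\omega=P_{\delta_1}\times\cdots\times P_{\delta_k}$; these are interchangeable by orbit--stabiliser.
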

\begin{proof}
Since $p$ does not divide $|K|$, the group $R=P^k$ is a Sylow $p$-subgroup of $G$, where $P$ is a Sylow $p$-subgroup of $H$. Let $\omega=(\delta_1,\ldots,\delta_k)\in\Omega$.  Then $R_\omega=P_{\delta_1}\times\cdots\times P_{\delta_k}$.
Suppose first that $H$ has Property~$(*)_p$. Then Lemma \ref{lem:easy} implies that $|R_{\delta_i}|$ does not depend on the choice of $\delta_i$ and so $|R_\omega|$ does not depend on $\omega$. Lemma \ref{lem:easy} then implies that $G$ has Property~$(*)_p$. Suppose conversely that  $G$ has Property~$(*)_p$. By  Lemma \ref{lem:easy}, $|R:R_\omega|=|\Delta|_p^k$ for all  $\omega\in\Omega$. Moreover, by Lemma \ref{lem:shortest} we have that $|P:P_{\delta_i}|\geqslant |\Delta|_p$ for all $\delta_i\in\Delta$. Thus $|P:P_{\delta_i}|= |\Delta|_p$ for all $\delta_i\in \Delta$ and so  again by  Lemma \ref{lem:easy}, $H$ has Property~$(*)_p$. 
\end{proof}

Throughout the remainder of this section we make the following assumption on the group $G$.

\begin{hypothesis}\label{productactionhyp}
Let $p$ be a prime, $k$ be a positive integer such that $k\geq 2$ and suppose that $G\leqslant \Sym(\Delta)\Wr S_k$ acts transitively on $\Omega=\Delta^k$ in product action. Let $K=G^{\{1,2,\ldots,k\}}$ and $H=G_1^{\Delta}$. Suppose that $H$ has a transitive subgroup $N$ such that $N^k\leqslant G\leqslant H\Wr K$. 
\end{hypothesis}

We first give a necessary condition for a group $G$ in product action to have Property~$(*)_p$.

\begin{lemma}\label{lem:reducePA}
Suppose that Hypothesis \ref{productactionhyp} holds. If $G$ has Property~$(*)_p$ on $\Omega$ then $H$ has Property~$(*)_p$ on $\Delta$.
\end{lemma}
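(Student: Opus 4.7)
The plan is to apply Lemma~\ref{lem:easy} by exhibiting a single Sylow $p$-subgroup $T$ of $H$ all of whose orbits on $\Delta$ have size exactly $|\Delta|_p$. The natural way to produce such a $T$ is via the coordinate-projection map: since by definition $G_1^\Delta = H$, the homomorphism $\pi_1 \colon G_1 \twoheadrightarrow H$ is surjective and therefore sends Sylow $p$-subgroups to Sylow $p$-subgroups. The two quantitative inputs will be Property~$(*)_p$ for $G$, which via Lemma~\ref{lem:easy} pins every Sylow-$p$ orbit on $\Omega$ at size $|\Omega|_p = |\Delta|_p^k$, and Lemma~\ref{lem:shortest} applied to the transitive subgroup $N$ of $H$, which lower-bounds any Sylow-$p$ orbit on $\Delta$ by $|\Delta|_p$.

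For the Sylow bookkeeping, I would first fix a Sylow $p$-subgroup $Q$ of $N$. Because $N^k$ lies in the kernel of $G \to K$, it sits inside $G_1$, so $Q^k$ is a $p$-subgroup of $G_1$; enlarge it to a Sylow $S$ of $G_1$, and then to a Sylow $R$ of $G$. A short check gives $S = R \cap G_1$, and setting $T := \pi_1(S)$ yields a Sylow $p$-subgroup of $H$ containing $Q = \pi_1(Q^k)$.

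The heart of the proof is a calculation on the diagonal tuple. Fix $\delta \in \Delta$ and take $\omega := (\delta, \delta, \ldots, \delta) \in \Omega$. Since $Q^k \leqslant R$, the orbit $\omega^R$ contains $\omega^{Q^k} = (\delta^Q)^k$; combining $|\omega^R| = |\Delta|_p^k$ from Lemma~\ref{lem:easy} applied to $G$ with the lower bound $|\delta^Q| \geqslant |\Delta|_p$ from Lemma~\ref{lem:shortest} applied to $N$ on $\Delta$ forces $|\delta^Q| = |\Delta|_p$ and the set equality $\omega^R = (\delta^Q)^k$. In particular every tuple in $\omega^R$ has its first coordinate inside $\delta^Q$. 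On the other hand, for any $s = (h_1, \ldots, h_k)\sigma \in S \leqslant G_1$ the permutation $\sigma$ fixes the symbol $1$, so the first coordinate of $\omega^s$ equals $\delta^{h_1} = \delta^{\pi_1(s)}$; as $s$ runs through $S$ these first coordinates trace out $\delta^T$. Thus $\delta^T \subseteq \delta^Q$, and the reverse inclusion is clear from $Q \leqslant T$, giving $|\delta^T| = |\Delta|_p$ for every $\delta$. Lemma~\ref{lem:easy} then delivers Property~$(*)_p$ for $H$ on $\Delta$.

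The conceptual obstacle to be overcome is that $H$ is not a subgroup of $G$, so one cannot simply restrict $G$'s Sylow-$p$ orbit structure on $\Omega$ to obtain $H$'s on $\Delta$. The bridge is the diagonal tuple together with the inclusion $Q^k \leqslant R$: choosing $\omega$ diagonal makes $|\omega^{Q^k}|$ already saturate the maximum $|\Delta|_p^k$ forced by Property~$(*)_p$, so that $\omega^R$ is pinned down to $(\delta^Q)^k$, trapping every first coordinate of an $R$-image of $\omega$ inside $\delta^Q$ and hence every $T$-translate of $\delta$ inside $\delta^Q$.
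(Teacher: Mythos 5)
Your proof is correct and rests on the same key idea as the paper's: pass to the diagonal tuple $\omega=(\delta,\dots,\delta)$ and use the fact that $\omega^{Q^k}$ already saturates the orbit size $|\Omega|_p=|\Delta|_p^k$ forced by Property~$(*)_p$, thereby pinning the first coordinates. The only difference is organisational — you argue directly via the projected Sylow subgroup $T=\pi_1(S)$ and Lemma~\ref{lem:shortest} applied to $N$, whereas the paper argues by contradiction with a Sylow $p$-subgroup $\widehat{P}$ of $G_1$ (invoking Lemma~\ref{lem:pcoprimefull} and Lemma~\ref{lem:elementary}(5)) — but the substance is the same.
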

\begin{proof}
Suppose that $G$ has Property~$(*)_p$ on $\Omega$ and that $H$ does not have Property~$(*)_p$ on $\Delta$. Then by Lemma \ref{lem:elementary}(5), $N^k$ has Property~$(*)_p$ on $\Omega$ and by Lemma \ref{lem:pcoprimefull}, $N$ has Property~$(*)_p$ on $\Delta$ (take $K=1$). Let $Q$ be a Sylow $p$-subgroup of $N$ and let $R=Q^k$. Then by Lemma \ref{lem:easy}, all orbits of $Q$ on $\Delta$ have size $|\Delta|_p$ and all orbits of $R$ on $\Omega$ have size $|\Omega|_p$.  Note that $R\leqslant G_1$.

Let $P$ be a Sylow $p$-subgroup of $H$. Since $G_1^\Delta=H$, there exists a Sylow $p$-subgroup $\widehat{P}$ of $G_1$ that contains $R$ and such that $\widehat{P}^\Delta=P$.  Since $G$ has Property~$(*)_p$ and $G_1$ is transitive on $\Omega$, Lemma \ref{lem:elementary}(5) implies that $G_1$ has Property~$(*)_p$ and so by Lemma \ref{lem:easy}  all orbits of $\widehat{P}$ have size $|\Omega|_p$.

As $H$ does not have Property~$(*)_p$ on $\Delta$,  there exists $\delta\in\Delta$ such that $|\delta^P|>|\Delta|_p$. Let $\omega=(\delta,\ldots,\delta)$. Then  $\omega^R=\delta^Q\times\delta^Q\times\cdots\times\delta^Q$ has size $|\Omega|_p$.
However, $\delta^Q\subset\delta^P$ and so $\omega^R\subset \omega^{\widehat{P}}$ as $\widehat{P}\leqslant G_1$ and induce $P$ on the first factor. This contradicts the fact that $G_1$ has Property~$(*)_p$ and so $H$ must have Property~$(*)_p$.
\end{proof}
 
 For sufficient conditions we need to split our analysis according to whether or not $p$ divides~$|K|$.
 
\begin{lemma}\label{lem:pcoprimeK}
Suppose that Hypothesis \ref{productactionhyp} holds and that $|K|$ is coprime to $p$.
 Then  $G$ has Property~$(*)_p$ on $\Omega$ if and only if  $H$ has Property~$(*)_p$ on $\Delta$.
\end{lemma}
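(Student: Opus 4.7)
The forward implication is already established by Lemma \ref{lem:reducePA}, so the plan is to focus on proving the converse. Assume that $H$ has Property $(*)_p$ on $\Delta$, and aim to show $G$ has Property $(*)_p$ on $\Omega$. The strategy is to sandwich $G$ between $N^k$ and the full wreath product $H\Wr K$ and invoke the transfer results from Section \ref{sec:prelim} together with Lemma \ref{lem:pcoprimefull}.

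First I would observe that, because $G$ is transitive on $\Omega=\Delta^k$ (by Hypothesis~\ref{productactionhyp}), the induced group $K=G^{\{1,\ldots,k\}}$ must be transitive on $\{1,\ldots,k\}$; combined with the transitivity of $H$ on $\Delta$ (which follows from $N\leqslant H$ being transitive), this makes $H\Wr K$ act transitively on $\Omega$ in the product action. Next, since $|K|$ is coprime to $p$, the hypothesis of Lemma~\ref{lem:pcoprimefull} applies to $H\Wr K$; the conclusion is that $H\Wr K$ has Property~$(*)_p$ on $\Omega$ because $H$ does.

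Finally, since $G\leqslant H\Wr K$ and $G$ is transitive on $\Omega$, I would apply Lemma~\ref{lem:elementary}(5) to the transitive subgroup $G$ of $H\Wr K$, which inherits Property~$(*)_p$ from the wreath product. This yields the desired conclusion that $G$ has Property~$(*)_p$ on $\Omega$.

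There is essentially no technical obstacle here; the lemma is a direct corollary of Lemma~\ref{lem:pcoprimefull}, Lemma~\ref{lem:elementary}(5), and Lemma~\ref{lem:reducePA}. The only point that requires a moment of care is verifying that $H\Wr K$ genuinely acts transitively on $\Omega$ so that one may speak of Property~$(*)_p$ for it in the sense of Lemma~\ref{lem:easy}; this is handled by noting that the transitivity of $G$ on $\Omega$ forces $K$ to be transitive on $\{1,\ldots,k\}$. The conceptual content is simply that when $p\nmid |K|$, the $p$-local structure of any subgroup $G$ of $H\Wr K$ that is transitive on $\Omega$ is controlled by the $p$-local structure of $H$ alone.
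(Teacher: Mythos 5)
Your proposal is correct and takes essentially the same route as the paper: the ``if'' direction is exactly the paper's argument (Lemma~\ref{lem:pcoprimefull} applied to $H\Wr K$ followed by Lemma~\ref{lem:elementary}(5)), and for the ``only if'' direction the paper merely repeats the proof of Lemma~\ref{lem:reducePA} verbatim rather than citing it as you do. Your side remark about needing $K$ transitive is unnecessary (transitivity of $H\Wr K$ on $\Delta^k$ already follows from transitivity of $H^k$, which is all Lemma~\ref{lem:pcoprimefull} uses), but it is harmless.
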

\begin{proof}
Suppose first that $H$ has Property~$(*)_p$. Then by Lemma \ref{lem:pcoprimefull}, $H\Wr K$ has Property~$(*)_p$ and so by Lemma \ref{lem:elementary}(5) so does $G$. 

Conversely, suppose that $G$ has Property~$(*)_p$. We assume that $H$ does not have Property~$(*)_p$ on $\Delta$ and seek a contradiction. By Lemma \ref{lem:elementary}(5), $N^k$ has Property~$(*)_p$ on $\Omega$ and by Lemma \ref{lem:pcoprimefull}, $N$ has Property~$(*)_p$ on $\Delta$ (take $K=1$). Let $Q$ be a Sylow $p$-subgroup of $N$ and let $R=Q^k$. Then by Lemma \ref{lem:easy}, all orbits of $Q$ on $\Delta$ have size $|\Delta|_p$ and all orbits of $R$ on $\Omega$ have size $|\Omega|_p$.  Note that $R\leqslant G_1$.

Let $P$ be a Sylow $p$-subgroup of $H$. Since $G_1^\Delta=H$, there exists a Sylow $p$-subgroup $\widehat{P}$ of $G_1$ that contains $R$ and such that $\widehat{P}^\Delta=P$.  Since $G$ has Property~$(*)_p$ and $G_1$ is transitive on $\Omega$, Lemma \ref{lem:elementary}(5) implies that $G_1$ has Property~$(*)_p$ and so by Lemma \ref{lem:easy}  all orbits of $\widehat{P}$ have size $|\Omega|_p$.

As $H$ does not have Property~$(*)_p$ on $\Delta$,  there exists $\delta\in\Delta$ such that $|\delta^P|>|\Delta|_p$. Let $\omega=(\delta,\ldots,\delta)$. Then  $\omega^R=\delta^Q\times\delta^Q\times\cdots\times\delta^Q$ has size $|\Omega|_p$.
However, $\delta^Q\subset\delta^P$ and so $\omega^R\subset \omega^{\widehat{P}}$ as $\widehat{P}\leqslant G_1$ and induce $P$ on the first factor. This contradicts the fact that $G_1$ has Property~$(*)_p$ and so $H$ must have Property~$(*)_p$.
\end{proof}

Finally, we deal with the case where $p$ divides the order of $K$. 

\begin{lemma}\label{lem:pdivK}
Suppose that Hypothesis \ref{productactionhyp} holds and that $p$ divides $|K|$. Then  $G$ has Property~$(*)_p$ on $\Omega$ if and only if $|\Delta|$ is a power of $p$.
\end{lemma}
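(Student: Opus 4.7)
The forward direction is immediate from Lemma~\ref{lem:elementary}(3): if $|\Delta|=p^a$, then $|\Omega|=|\Delta|^k=p^{ak}$ is a $p$-power, so $G$ has Property~$(*)_p$.

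For the converse I would argue by contradiction. Suppose $G$ has $(*)_p$ but $|\Delta|$ is not a $p$-power; write $p^a=|\Delta|_p$ and set $t=|\Delta|/p^a\geq 2$. Lemma~\ref{lem:reducePA} gives $(*)_p$ for $H$, and applying Lemma~\ref{lem:elementary}(5) to the transitive subgroup $N^k\leq G$ followed by Lemma~\ref{lem:pcoprimefull} (with trivial top group) gives $(*)_p$ for $N$. By Lemma~\ref{lem:easy}, every Sylow $p$-subgroup of $H$, and every Sylow $p$-subgroup $Q$ of $N$, partitions $\Delta$ into $t$ orbits of common size $p^a$.

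Next, use Sylow's theorem in $G$ to pick a Sylow $p$-subgroup $R$ of $G$ containing $Q^k$, and write $\pi\colon G\to K$ for the projection, $T=\pi(R)$ and $B=R\cap H^k$. Note $T$ is a nontrivial Sylow $p$-subgroup of $K$ since $p\mid|K|$. Since $B$ is a $p$-subgroup of $H^k$, it lies in some Sylow $P_1\times\cdots\times P_k$ of $H^k$, where each $P_i$ is a Sylow of $H$ (automatically containing $Q$, since $Q^k\leq B$). For every $\omega=(\delta_1,\ldots,\delta_k)$, the chain $Q^k\leq B\leq P_1\times\cdots\times P_k$ sandwiches $|\omega^B|$ between $|\omega^{Q^k}|=p^{ak}$ and $|\omega^{P_1\times\cdots\times P_k}|=p^{ak}$, forcing $\omega^B=\delta_1^{P_1}\times\cdots\times\delta_k^{P_k}$. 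Thus the $B$-orbits are exactly the products of $P_i$-orbits. Combining $|\omega^R|=p^{ak}$ (forced by $(*)_p$ for $G$, since $|\Omega|_p=p^{ak}$) with the identity $|\omega^R|=|\omega^B|\cdot[T:\pi(R_\omega)]$ yields $\pi(R_\omega)=T$ for every $\omega$. So every $r\in R$ preserves each product $\Delta_{j_1}^{(1)}\times\cdots\times\Delta_{j_k}^{(k)}$ setwise, where $\Delta_1^{(i)},\ldots,\Delta_t^{(i)}$ are the $P_i$-orbits on $\Delta$.

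I expect the main obstacle to be extracting a contradiction from this rigidity. Writing a general $r\in R$ as $(h_1,\ldots,h_k)\sigma$ and expanding the product action, setwise preservation of every such product translates into
\[
\Delta_{j_m}^{(m)}\cdot h_m = \Delta_{j_{\sigma(m)}}^{(\sigma(m))} \qquad \text{for all } m\in\{1,\ldots,k\} \text{ and all } (j_1,\ldots,j_k)\in\{1,\ldots,t\}^k.
\]
Picking a nontrivial $\sigma\in T$ (available since $T\neq 1$) and an index $m$ with $\sigma(m)\neq m$, the indices $j_m$ and $j_{\sigma(m)}$ can be chosen independently. Fixing $j_m=1$ and letting $j_{\sigma(m)}$ range over $\{1,\ldots,t\}$ forces the single set $\Delta_1^{(m)}\cdot h_m$ to coincide with all $t\geq 2$ distinct $P_{\sigma(m)}$-orbits, the desired contradiction.
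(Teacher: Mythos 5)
Your argument is correct and takes essentially the same route as the paper: both hinge on showing that a Sylow $p$-subgroup $R$ of $G$ containing $Q^k$ has orbits of the same size as those of its base part, so that every point stabiliser $R_\omega$ surjects onto the nontrivial top group $\pi(R)$, after which an element of $R_\omega$ moving a coordinate forces a Sylow $p$-subgroup of $H$ to be transitive on $\Delta$. The paper reaches this with a single test point $(\delta_1,\delta_2,\ldots,\delta_2)$ and concludes transitivity directly, whereas you phrase it as a contradiction against the $R$-invariant partition into products of $P_i$-orbits, but the mechanism is identical.
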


We recall from  Lemma~\ref{lem:elementary}(3) that,  if $|\Delta|$ is a power of $p$, then $H$ has Property~$(*)_p$ on $\Delta$.

\begin{proof}
If $|\Delta|$ is a power of $p$ then so is $|\Omega|$. Hence as $G$ acts transitively on $\Omega$ it follows from Lemma~\ref{lem:elementary}(3) that $G$ has Property~$(*)_p$.

Suppose conversely that $G$ has Property~$(*)_p$ on $\Omega$. Then by Lemma \ref{lem:elementary}(5) the transitive subgroup $N^k$ has Property~$(*)_p$ on $\Omega$ and hence by Lemma \ref{lem:pcoprimefull} the group $N$ has Property~$(*)_p$ on $\Delta$. Let $Q$ be a Sylow $p$-subgroup of $N$. Then by Lemma \ref{lem:easy} all orbits of $Q$ on $\Delta$ have size $|\Delta|_p$ and all orbits of $Q^k\in\Syl(N^k)$  on $\Omega$ have size $|\Omega|_p=|\Delta_p|^k$. Let $R$ be a Sylow $p$-subgroup of $G$ containing $Q^k$. We may assume that $R\leqslant P \Wr S$ where $P$ is a Sylow $p$-subgroup of $H$ containing $Q$ and $S$ is a Sylow $p$-subgroup of $K$. As $G$ has Property~$(*)_p$ all orbits of $R$ have size $|\Omega|_p=|\Delta|_p^k$.   Choose $\delta_1,\delta_2\in \Delta$ such that $\delta_1\neq \delta_2$, and consider $\omega':=(\delta_1,\delta_2,\delta_2,\ldots,\delta_2)\in\Omega$. Then $|(\omega')^R|=|\Delta|_p^k=|(\omega')^{Q^k}|$, and hence $Q^k$ acts transitively on $(\omega')^R$. Hence $R=Q^kR_{\omega'}$ and so $R_{\omega'}^{\{1,2,\ldots,k\}}=R^{\{1,2,\ldots,k\}}\leqslant S$. Since $p$ divides $|K|$ we have that $R^{\{1,2,\ldots,k\}}\neq 1$ and so we may assume that there exists $g=(h_1,h_2,\ldots,h_k)\sigma\in R_{\omega'}$ such that $1^\sigma=2$ and each $h_i\in P$. By the definition of the action, the image $(\omega')^g$ has second entry $\delta_1^{h_1}$, and since  $g$ fixes $\omega'$ it follows that $\delta_1^{h_1}=\delta_2$. Since $\delta_1, \delta_2$ were arbitrary distinct points of $\Delta$, we conclude that $P$ acts transitively on $\Delta$ and so $|\Delta|$ is a power of~$p$.
\end{proof}

Lemmas \ref{lem:pcoprimeK} and \ref{lem:pdivK} can be combined to get the following result.

\begin{theorem}\label{PA-p>k}
Let $G$ be a group satisfying Hypothesis \ref{productactionhyp}. 
\begin{enumerate}
    \item[(a)]  If $p$ does not divide $|K|$, then  $G$ has Property~$(*)_p$ on $\Omega$ if and only if  $H$ has Property~$(*)_p$ on $\Delta$.
    \item[(b)] If $p$ divides $|K|$, then  $G$ has Property~$(*)_p$ on $\Omega$ if and only if $|\Delta|$ is a power of $p$.
\end{enumerate}
\end{theorem}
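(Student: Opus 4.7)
The plan is immediate: Theorem~\ref{PA-p>k} is simply the combination of the two preceding lemmas, partitioned by whether or not $p$ divides $|K|$. Part~(a) is precisely Lemma~\ref{lem:pcoprimeK}: under Hypothesis~\ref{productactionhyp}, when $\gcd(p,|K|)=1$, a Sylow $p$-subgroup of $H\Wr K$ is $P^k$ for $P\in\Syl_p(H)$, so one shows first that $H\Wr K$ inherits Property~$(*)_p$ from $H$ (via Lemma~\ref{lem:pcoprimefull}) and then uses Lemma~\ref{lem:elementary}(5) to push this down to $G$; the reverse implication uses that $N^k\leq G$ is transitive to get Property~$(*)_p$ for $N^k$, and then compares the orbit lengths $|\omega^R|$ with $|\Omega|_p$ using Lemma~\ref{lem:easy} to conclude $H$ has Property~$(*)_p$.

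Part~(b) is precisely Lemma~\ref{lem:pdivK}: if $|\Delta|$ is a $p$-power then so is $|\Omega|$, whence Lemma~\ref{lem:elementary}(3) immediately gives Property~$(*)_p$ for any transitive $G$ on $\Omega$. Conversely, assuming $G$ has Property~$(*)_p$ with $p\mid |K|$, one takes a Sylow $p$-subgroup $R\leq P\Wr S$ (with $P\in\Syl_p(H)$, $S\in\Syl_p(K)$) containing $Q^k$ for $Q\in\Syl_p(N)$ and tests it on the point $\omega'=(\delta_1,\delta_2,\ldots,\delta_2)$ with $\delta_1\neq\delta_2$; comparing $|{(\omega')^R}|=|\Omega|_p=|{(\omega')^{Q^k}}|$ forces $R=Q^kR_{\omega'}$, so $R_{\omega'}$ surjects onto the top group $R^{\{1,\ldots,k\}}\leq S$, and choosing an element permuting coordinates $1$ and $2$ produces $h_1\in P$ with $\delta_1^{h_1}=\delta_2$; since $\delta_1,\delta_2$ were arbitrary, $P$ is transitive on $\Delta$ and hence $|\Delta|$ is a $p$-power.

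Since the two conditions $p\nmid |K|$ and $p\mid |K|$ are mutually exclusive and exhaustive, the theorem follows by citation. There is no genuine obstacle — all the real work has already been carried out in Lemmas~\ref{lem:pcoprimeK} and~\ref{lem:pdivK}, and the only task is to package them as a single statement covering both cases.
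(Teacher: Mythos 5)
Your proposal matches the paper exactly: the paper introduces Theorem~\ref{PA-p>k} with the remark that Lemmas~\ref{lem:pcoprimeK} and~\ref{lem:pdivK} ``can be combined to get the following result,'' and gives no further proof, so part~(a) is a verbatim citation of Lemma~\ref{lem:pcoprimeK} and part~(b) of Lemma~\ref{lem:pdivK}. Your summaries of the underlying arguments (the Sylow subgroup $P^k$ and orbit-length comparison in the coprime case; the test point $\omega'=(\delta_1,\delta_2,\ldots,\delta_2)$ forcing $P$ transitive on $\Delta$ in the other) also faithfully reproduce the paper's proofs of those lemmas.
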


\subsection{Twisted wreath type groups}

We show that primitive groups of Twisted wreath type do not have Property $(*)_p$.

\begin{lemma}\label{lem:TW}
Let $G$ be a primitive group of Twisted wreath type acting on $\Omega$ and let $p$ be a prime dividing $|\Omega|$. Then $G$ does not have Property~$(*)_p$.
\end{lemma}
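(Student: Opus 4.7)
The plan is to exhibit a Sylow $p$-subgroup of $G$ with orbits of different sizes on $\Omega$, contradicting Lemma~\ref{lem:easy}. Recall from the O'Nan--Scott description of twisted wreath type that $G=N\rtimes P$, where the socle $N=T_1\times\cdots\times T_k$ ($k\ge 2$) acts regularly on $\Omega$ (so $G_\omega=P$), each $T_i\cong T$ is a non-abelian finite simple group, $P$ acts transitively and faithfully on $\{1,\ldots,k\}$ by permuting the factors $T_i$, and the twisting map $\phi:Q\to\Aut(T)$ with $Q:=P_1$ satisfies $\phi(Q)\supseteq\Inn(T)\cong T$; in particular $|T|$ divides $|Q|$ and hence $|P|$.

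Since $p$ divides $|\Omega|=|T|^k$, it divides $|T|$ and therefore $|P|$, so any Sylow $p$-subgroup $S$ of $P$ is nontrivial. I would next produce a specific Sylow $p$-subgroup of $G$ of the form $\hat P:=RS$, where $R=R_1\times\cdots\times R_k\in\Syl_p(N)$ (with $R_i\in\Syl_p(T_i)$) is chosen to be normalised by $S$. Such an $R$ exists by the standard argument that a $p$-group $S$ acting on the set $\Syl_p(N)$, which has cardinality $\equiv 1\pmod p$, must fix some element. Then $|\hat P|=|N|_p|P|_p=|G|_p$, so $\hat P\in\Syl_p(G)$.

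Identifying $\Omega$ with $N$ via the regular $N$-action (placing $\omega\leftrightarrow 1$), a direct computation in the semidirect product shows that for each $n\in N$ the stabiliser $\hat P_n$ is isomorphic, via the projection $rs\mapsto s$, to the stabiliser in $S$ of the coset $nR$ under the natural automorphism action of $S$ on the set $N/R$ of right cosets (this action is well-defined since $S$ normalises $R$). By Lemma~\ref{lem:easy}, Property~$(*)_p$ holds for $\hat P$ if and only if $|\hat P_n|=|S|$ for every $n$, equivalently, if and only if $S$ acts trivially on $N/R$.

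To contradict this, it suffices to show $S$ acts nontrivially on $N/R$. Since $P$ is faithful on $\{1,\ldots,k\}$ and $S\ne 1$, there exist $s\in S$ and $i\in\{1,\ldots,k\}$ with $j:=i^s\ne i$. Since $T$ is non-abelian simple, $R_i$ is a proper subgroup of $T_i$, so I can pick $t\in T_i\setminus R_i$; let $n\in N$ have $t$ in coordinate $i$ and $1$ elsewhere. Then $s$ moves the content of coordinate $i$ to coordinate $j$, so coordinate $i$ of ${}^s n$ equals $1$, and hence coordinate $i$ of $n^{-1}\cdot{}^s n$ is $t^{-1}\notin R_i$; therefore $n^{-1}\cdot{}^s n\notin R$, i.e.\ ${}^s n\notin nR$, as required. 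The main obstacle is really the preliminary structural input: verifying that $p$ necessarily divides $|P|$ via the condition $\phi(Q)\supseteq\Inn(T)$, and that the faithful action of $P$ on $\{1,\ldots,k\}$ lifts to a nontrivial action of $S$ on coordinates so that some coordinate is actually moved; once these are secured, the rest is an explicit coordinate computation.
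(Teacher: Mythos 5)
Your argument is correct, but it takes a genuinely different route from the paper's. The paper invokes the blow-up decomposition of \cite[Lemma~6.4]{PS} to view $G$ as a subgroup of $H\Wr S_k$ in product action, identifies the component $H$ as a diagonal-type group with socle $T^2$, and then chains together Lemma~\ref{lem:diagonal} (diagonal groups fail $(*)_p$) and Lemma~\ref{lem:reducePA} (the product-action reduction). You instead work directly inside $G=N\rtimes P$ with $N=T^k$ regular: you build an explicit Sylow $p$-subgroup $\widehat{P}=RS$ and exhibit orbits of different lengths --- the orbit of the base point has length $|R|=n_p$, while any $n$ with $n^s\notin nR$ lies in a strictly longer orbit --- contradicting Lemma~\ref{lem:easy}. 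This is more elementary and self-contained (no product-action or diagonal-type machinery), at the cost of redoing by hand the bookkeeping the blow-up packages for you; the intermediate steps (an $S$-invariant $R\in\Syl_p(N)$ exists by the usual fixed-point count, $\widehat{P}_n\cong S_{nR}$ under $rs\mapsto s$, and the final coordinate computation) all check out. The one input you should pin down is the assertion that $P$ acts \emph{faithfully} on the $k$ coordinates. It is true for primitive groups of twisted wreath type, but it is a consequence of $C_G(N)=1$ together with $\Inn(T)\leqslant\varphi(Q)$ rather than part of the bare definition in the references the paper uses, so either cite it in a form that states faithfulness or note that your argument survives without it: an element of $S$ fixing every coordinate and acting trivially on $N/R$ would induce on each $T_i$ an automorphism fixing every coset of the proper subgroup $R_i$, hence would lie in the kernel of $T_i\rtimes\langle\alpha\rangle$ acting on $T_i/R_i$, which meets $T_i$ trivially and so centralises $T_i$; such an element centralises $N$ and is therefore trivial, so $S$ would be trivial, contradicting $p$ dividing $|P|$.
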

\begin{proof}
Suppose that $G$ is primitive of twisted wreath type with socle $N=T^k$. Then $N$ acts regularly on $\Omega$. Thus we can identify $\Omega$ with $N$ and we can identify $G$ as the twisted wreath product $T\,\mathrm{twr}_\varphi \,P$ for some homomorphism $\varphi:Q\rightarrow\Aut(T)$ where $Q$ is a subgroup of $P$ of index $k$. We follow the treatment in  \cite[Section 6]{PS} and let $\omega\in\Omega$ correspond to $1_N$. Then $G_\omega=P$.
By \cite[Lemma 6.4]{PS} we have that that $T^k\leqslant G\leqslant H\Wr S_k$ where $H=G_1^{\Delta}\leqslant\Sym(\Delta)$ where $\Delta=T$. Now $N\norml G_1$ and so $H$ contains $T$ as a regular normal subgroup.  Now  $(G_1)_\omega=Q$ and $((G_1)_\omega)^\Delta=Q\varphi$. Thus $H\leqslant T\rtimes \Aut(T)$. By \cite[Corollary 6.3]{PS} we have that $\Inn(T)\leqslant\varphi(Q)$ and so $H=G_1^{\Delta}$ is a diagonal group with socle $T^2$ acting on $\Delta$. By Lemma \ref{lem:diagonal}, $T\times T$ does not have Property~$(*)_p$ on $\Delta$ and so by Lemma \ref{lem:elementary}(5), $H$ does not have Property~$(*)_p$ on $\Delta$. Hence Lemma \ref{lem:reducePA} implies that $G$ does not have Property~$(*)_p$ on $\Omega$.
\end{proof}

The following example shows that we cannot replace the primitive assumption in Lemma \ref{lem:TW} with quasiprimitive.

\begin{example}
Let $T$ be a finite nonabelian simple group with order divisible by a prime $p$ and let $\Delta=T$. Then $T$ acts regularly on $\Delta$  with Property~$(*)_p$. Let $k$ be a positive integer coprime to $p$ and let $G=T\Wr C_k$ acting in product action on $\Omega=T^k$. Then by Lemma \ref{lem:pcoprimefull}, $G$ has Property~$(*)_p$ on $\Omega$. Note that $N=T^k$ is the unique minimal normal subgroup of $G$ and acts regularly on $\Omega$. Thus $G$ is quasiprimitive of Twisted wreath type \cite[p156]{PS}.
\end{example}

\subsection{Proof of Theorem \ref{thm:primitive}}

Let $G$ be a primitive permutation group on $\Omega$ and suppose that $G$ has Property~$(*)_p$ for some prime $p$ dividing $|\Omega|$.  
By the  O'Nan-Scott Theorem $G$ has exactly one of the following five types: (i) Affine type, (ii) Almost simple type, (iii) Diagonal type, (iv) Product action type, and (v) Twisted wreath type. 

By Lemma \ref{lem:diagonal}, $G$ is not of Diagonal type and by Lemma \ref{lem:TW}, $G$ is not of Twisted wreath type. Moreover, if $G$ is of Almost simple type then case (1) of the theorem holds.

If $G$ is of Affine type then $G\leqslant \AGL(d,p)$ and $|\Omega|=p^d$. Thus by Lemma \ref{lem:elementary}(3), $G$ has Property~$(*)_p$ but does not have Property $(*)_r$ for any $r\neq p$ and $r$ dividing $|G|$. Hence case (2) of the theorem holds.

It remains to consider the case where $G$ is of Product action type. Then following the description in Case III(b) of \cite{LPS} we have that $\Omega=\Delta^k$ and there is a primitive group $H\leqslant\Sym(\Delta)$ of Almost simple or Diagonal type and with socle $N$, such that $N^k\leqslant G\leqslant H\Wr K$ for some $K\leqslant S_k$. Since $N$ is transitive on $\Delta$ it follows that $G$ satifies Hypothesis \ref{productactionhyp}. Hence by Lemma \ref{lem:reducePA}, $H$ has Property~$(*)_p$ on $\Delta$. Thus by Lemma \ref{lem:diagonal} $H$ is of Almost simple type. Moreover, by Lemma \ref{lem:pdivK}, if $p$ divides $|K|$ then $|\Delta|$ is a power of $p$. Thus case (3) of the theorem holds. Moreover, by Lemmas \ref{lem:pcoprimeK} and \ref{lem:pdivK} any group of satisfying these conditions  has Property~$(*)_p$.

\subsection{Almost-simple primitive groups of low degree}\label{sec:smalldegree}

Note that $\PSL_2(q)$ acts on the nonsingular conic of the Desarguesian projective plane of order $q$.
This is the natural 2-transitive action of $\PSL_2(q)$, but for $q \ne 7, 9$, it also has a (unique) primitive action of degree $\binom{q}{2}$, which is the action on the external lines to the nonsingular conic. In this action, for $q$ even, the point stabiliser is isomorphic to the dihedral group $D_{2(q+1)}$ of order $2(q+1)$. (Note: there are $q+1$ tangents to the conic, $\binom{q+1}{2}$ secants, and $q^2+q+1$ lines in total.)

\begin{lemma}\label{lem:eg}
The action of $\PSL_2(q)$, $q$ even, on $n=\binom{q}{2}$ points has Property $(*)_2$. Moreover, $2n_2<n$ for $q\geq 4$ and the stabiliser of a point has even order.
\end{lemma}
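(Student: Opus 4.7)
The plan is to verify the criterion of Lemma~\ref{lem:easy}: with $q=2^a$ and $n=q(q-1)/2$ one has $n_2=q/2$, so it suffices to show that every orbit of a Sylow $2$-subgroup $P\leq G$ on the set $\Omega$ of external lines to $\mathcal{C}$ has length exactly $q/2$. Here $P$ is elementary abelian of order $q$ and $G_\omega\cong D_{2(q+1)}$ with $q+1$ odd; in particular $|G_\omega|=2(q+1)$ is even, and $2n_2=q<q(q-1)/2=n$ precisely when $q\geq 4$, which disposes of the ``moreover'' statements.

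The argument will bound $|P_\omega|$ both above and below by $2$. For the upper bound, observe that in $D_{2m}$ with $m$ odd any two distinct reflections $r_1,r_2$ generate a dihedral subgroup of order $2\cdot\mathrm{ord}(r_1r_2)$, where $\mathrm{ord}(r_1r_2)$ is a nontrivial divisor of $m$ and is therefore odd. This rules out a Klein four-subgroup, so every $2$-subgroup of $G_\omega$ has order at most $2$; in particular $|P_\omega|\leq 2$.

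For the matching lower bound I would work in $\mathrm{PG}(2,q)$: fix $\infty\in\mathcal{C}$ and take $P$ to be the Sylow $2$-subgroup fixing $\infty$. A short coordinate calculation (using the parametrisation $\mathcal{C}=\{(1:t:t^2):t\in\GF(q)\}\cup\{(0:0:1)\}$ together with the translations $t\mapsto t+s$) shows that each nonidentity $\sigma\in P$ acts on the plane as a transvection whose axis is the tangent line $\ell_\infty$ to $\mathcal{C}$ at $\infty$, and that $\sigma\mapsto\mathrm{centre}(\sigma)$ is a bijection from $P\setminus\{1\}$ onto $\ell_\infty\setminus\{\infty,N\}$, where $N$ is the nucleus of $\mathcal{C}$. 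The crucial characteristic-$2$ fact is that every tangent to $\mathcal{C}$ passes through $N$. Therefore, if $\ell$ is an external line, the unique point $\ell\cap\ell_\infty$ is neither $\infty$ (since $\ell\cap\mathcal{C}=\varnothing$) nor $N$ (since $\ell$ is not a tangent), so $\ell\cap\ell_\infty=\mathrm{centre}(\sigma)$ for a unique $\sigma\in P\setminus\{1\}$. Because a transvection fixes every line through its centre, $\sigma$ fixes $\ell$, and hence $|P_\omega|\geq 2$.

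Combining the two bounds forces $|P_\omega|=2$ for every $\omega\in\Omega$, so every $P$-orbit has length $q/2=n_2$ and Lemma~\ref{lem:easy} yields Property~$(*)_2$. The main obstacle is the geometric step, namely identifying the $q-1$ nonidentity elements of $P$ with the transvections whose common axis is $\ell_\infty$ and whose centres exhaust $\ell_\infty\setminus\{\infty,N\}$; once this is in place, the rest follows from the characteristic-$2$ peculiarity that $N$ lies on every tangent to $\mathcal{C}$.
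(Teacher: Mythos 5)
Your proof is correct and follows essentially the same route as the paper's: both identify the unique nontrivial element of $P$ stabilising an external line $\ell$ as the elation with axis the tangent $t_X$ at the fixed conic point and centre $\ell\cap t_X$, exploiting the characteristic-$2$ fact that the nucleus lies on every tangent. The only cosmetic difference is in the upper bound: you read off $|P_\omega|\leq 2$ from the Sylow $2$-subgroup of $D_{2(q+1)}$ (which is immediate since $q+1$ is odd), whereas the paper gets it by noting that any element of $G_X\cap G_\ell$ fixes three points of $t_X$ and hence fixes $t_X$ pointwise by preservation of cross-ratio.
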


\begin{proof}
The stabiliser in $G:=\PSL_2(q)$ of an external line is isomorphic to the dihedral group $D_{2(q+1)}$.  Let $P$ be a Sylow $2$-subgroup of $G$. Then $P$ has order $q$. Indeed, $P$ fixes a unique point $X$ of the conic, and the stabiliser of $X$ in $G$ is of the form
$P\rtimes C_{q-1}$. Let $\ell$ be an external line. We claim that $|G_X\cap G_\ell|=2$, and hence
$|P\cap G_\ell|=2$. Let $t_X$ be the tangent line at $X$. Since $q$ is even, $t_X$ passes through the nucleus
$N$ of the conic. So $G_X\cap G_\ell$ fixes $X$, $N$, and the point $Y:=t_X\cap \ell$. Moreover, 
$G_X\cap G_\ell$ fixes every point of $t_X$ (because it fixes three points of $t_X$ and preserves cross-ratio). 
Hence a nontrivial element of $G_X\cap G_\ell$ is
the unique elation with axis $t_X$ and centre $Y$.
Therefore, the intersection of $P$ with the stabiliser of any external line has order 2, and so every $P$-orbit on 
external lines has size $q/2=n_2$. Since $n=n_2(q-1)$ and the stabiliser of a point is a dihedral group, the result follows.
\end{proof}

We remark that $\PGammaL_2(q)$, where $q=2^f$, acting on $\binom{q}{2}$ points, satisfies $(*)_2$ when
$f$ is odd as a Sylow 2-subgroup of $\PGammaL_2(q)$ is a Sylow 2-subgroup of $\PSL_2(q)$.

By a straight-forward computation in \textsf{GAP} \cite{GAP}, we have enumerated the
primitive groups  of Almost Simple type with Property $(*)_p$ of degree $n$ at most 4095 such that $pn_p<n$ and $p$ divides the order of the stabiliser of a point.  All such groups apart from when the socle is $\PSL_2(q)$, $q$ even, acting in degree $\binom{q}{2}$, are listed in Table \ref{tab:prim}. It is curious that the largest degree example known that does not have a socle isomorphic to $\PSL_2(q)$ has degree $135$. 

\begin{table}
\begin{tabular}{cccc}
\toprule
Degree & $G$ & $p$ & Rank\\
\midrule
6 & $\PSL_2(5)$ & 2 &  2\\ 
12 & $M_{11}$ & 3 &  2\\ 
36 & $\PSU_3(3)$ & 3 &  4\\ 
36 & $\PGammaU_3( 3)$ & 3 &  3\\ 
112 & $\PSU_4(3)$ & 2 &  3\\ 
135 & $\PSp_6( 2)$ & 3 &  4\\ 
\bottomrule\\
\end{tabular}
\caption{Some primitive groups $G$ of degree less than 4095 and with property $(*)_p$. }\label{tab:prim}
\end{table}

\begin{question}
Are there any Almost simple type primitive permutation groups of degree $n\geqslant 135$ with Property $(*)_p$ such that $pn_p<n$ and $p$ divides the order of the stabiliser of a point, other than those of degree $\binom{q}{2}$ and with socle $\PSL_2(q)$? \end{question}

\section{Proof of Theorem \ref{thm:2trans}}
\label{sec:primitive-proof}

     We note the following classic result which, together with Tornier's result \cite{tornier} for the alternating and symmetric groups, deals completely with the case where $n_p=p$.

\begin{theorem}\cite[Theorem]{Praeger}\label{thm:CP}
Let $p$ be a prime and $n$ a positive integer with $n_p=p$. Suppose that $G$ is a $2$-transitive permutation group on $n$ points that does not contain $A_n$ and let $P$ be a Sylow $p$-subgroup of $G$. If $G$ has Property~$(*)_p$ then one of the following holds:
\begin{enumerate}[{\rm (a)}]
    \item $p$ does not divide $|G_\omega|$;
    \item $|P|=4$ and $G=A_5$ with $n=6$;
    \item $|P|=9$ and $G=M_{11}$ with $n=12$;
\end{enumerate}
\end{theorem}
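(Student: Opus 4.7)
The plan is to invoke the Classification of Finite Simple Groups, via its corollary classifying the finite $2$-transitive groups, and then handle the resulting families one by one. The key reformulation I would use throughout is Lemma~\ref{lem:easy}: under the hypothesis $n_p = p$, Property~$(*)_p$ is equivalent to the condition that \emph{every} $P$-orbit on $\Omega$ has length exactly~$p$.

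The first step disposes of affine type. A $2$-transitive affine group has degree $n = r^d$ for some prime $r$, with socle the translation group $\mathbb{F}_r^d$. The requirements $p \mid n$ and $n_p = p$ force $r = p$ and $d = 1$, so that $G \leq \AGL_1(p)$ and the point stabiliser $G_\omega \leq \GL_1(p) \cong C_{p-1}$ has order coprime to $p$, placing us in conclusion~(a).

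The second step is to run through the almost simple $2$-transitive groups with socle different from $A_n$: the projective groups $\PSL_d(q) \leq G \leq \PGammaL_d(q)$ on points of $\mathrm{PG}_{d-1}(q)$; the unitary groups $\PSU_3(q)$, Suzuki groups $\Sz(q)$, and Ree groups $\Ree(q)$ on their natural unitals; the two $2$-transitive actions of $\PSp_{2d}(2)$; the small exceptional actions of $\PSL_2(11)$ on $11$ points and of $A_7$ on $15$ points; the Mathieu groups $M_{11}$ (both in degrees $11$ and $12$), $M_{12}$, $M_{22}$, $M_{23}$, $M_{24}$; and the Higman--Sims and Conway actions $\mathrm{HS}$ on $176$ and $\mathrm{Co}_3$ on $276$ points. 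For each group, the strategy is to identify the primes $p$ with $p \mid n$ and $n_p = p$; for such $p$, either $p \nmid |G_\omega|$ (placing us in (a)) or $|P| \geq p^2$ and we must test whether every $P$-orbit on $\Omega$ has length $p$. The sporadic and small exceptional actions require only a short direct computation with known subgroup structures, and the genuine exceptions that emerge are $A_5 \cong \PSL_2(5)$ of degree $6$ with $p = 2$ (where a Klein four Sylow has three orbits of length $2$) and $M_{11}$ of degree $12$ with $p = 3$ (where the elementary abelian Sylow $3$-subgroup of order $9$ has four orbits of length $3$), yielding (b) and (c).

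The main obstacle is the uniform treatment of the projective family $\PSL_d(q) \leq G \leq \PGammaL_d(q)$ on $n = (q^d-1)/(q-1)$ points. Here one splits on whether $p$ divides the defining characteristic: if $p \mid q$, then a short calculation shows $p \nmid n$, so this case does not arise. For $p$ coprime to $q$, let $e$ be the multiplicative order of $q$ modulo $p$; then $p \mid n$ forces $e \mid d$ and $e \neq 1$, and the condition $n_p = p$ pins down very restrictive arithmetic on $(d,q,e)$. In this situation a Sylow $p$-subgroup of $\PSL_d(q)$ lies in the normaliser of a maximal torus of type associated with a partition of $d$ into parts divisible by $e$, and one reads off its orbit decomposition on $\mathrm{PG}_{d-1}(q)$; the orbit lengths involve factors of $q^e - 1$ and of the cyclic permutation action on coordinate blocks, and one verifies that whenever $|P| \geq p^2$ there exist orbits of length $> p$ on points, contradicting Property~$(*)_p$. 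An analogous but simpler argument handles $\PSU_3(q)$, $\Sz(q)$, $\Ree(q)$, and the $\PSp_{2d}(2)$ actions by exploiting the fact that in each case the Sylow $p$-subgroups for the relevant primes are either cyclic Singer-like subgroups or contained in specific maximal tori whose orbit structure on the $2$-transitive domain is completely understood. The upshot is that no new exceptions arise beyond those listed, completing the proof.
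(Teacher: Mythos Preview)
The paper does not give its own proof of this statement: Theorem~\ref{thm:CP} is quoted from Praeger's 1974 paper \cite{Praeger} and is used as a black box in the proof of Theorem~\ref{thm:2trans}, where the authors write ``if $n_p=p$ then one of (b), (c) or (d) holds by Theorem~\ref{thm:CP}, so we also assume that $n_p\geq p^2$.'' So there is no in-paper proof to compare against.

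That said, two remarks are worth making. First, your plan relies on the classification of finite $2$-transitive groups, hence on CFSG; Praeger's original argument predates CFSG and proceeds by direct permutation-group methods (centralisers of $p$-elements, fixed-point counts, and the structure of $2$-transitive groups known at the time), so your route is genuinely different from, and in a sense less elementary than, the cited source. Second, your CFSG-based plan is very close in spirit to what the present paper actually does in Section~\ref{sec:primitive-proof} to prove the more general Theorem~\ref{thm:2trans}: split off the affine case, then run through the almost simple $2$-transitive socles, and for each family locate a Sylow $p$-subgroup inside an explicit torus or torus normaliser and exhibit an orbit of the wrong length. If you want your sketch to stand alone for the case $n_p=p$, the place requiring the most care is the linear case $\PSL_d(q)\le G\le\PGammaL_d(q)$: the condition $n_p=p$ together with $p\mid |G_\omega|$ forces you to control both the toral part and the possible contribution of field automorphisms, and your outline (``one verifies that whenever $|P|\ge p^2$ there exist orbits of length $>p$'') glosses over precisely the arithmetic and orbit analysis that occupies most of the paper's treatment of $\PSL_d(q)$ for general $n_p$. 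The paper's arguments there (the three subcases $b,e\ge 2$; $b\ge 2,\,e=1$; $b=1,\,e=d$, together with Lemma~\ref{lem:field}) show what is actually needed, and specialising them to $n_p=p$ would give a correct, if anachronistic, proof.
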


We need the following three preliminary results. The first concerns primitive prime divisors and can be found in 
\cite[Lemma 2.5]{guestp} (for $p=2$) and \cite[Lemma 4.1(iii)]{popiel}.

\begin{lemma}\label{lem:ppd}
Let $q$ be a prime power and let $p$ be a prime. Suppose that $p$ divides $q^m-1$, and let $e=o(q\ \text{mod}\ p)$, the least positive integer such that $p$ divides $q^e-1$. Then $p\equiv 1\pmod e$ and  $m=eb$, for some $b$, Moreover, 
\begin{enumerate}[(a)]
    \item if $p$ is odd then $(q^m-1)_p=(q^e-1)_p\cdot b_p$; and
    \item if $p=2$ then $e=1$ and $(q^m-1)_2=(q^2-1)_2\cdot (m/2)_2$ if $m$ is even, and $(q-1)_2$ if $m$ is odd.
\end{enumerate}
\end{lemma}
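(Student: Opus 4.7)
The plan is to derive everything from the standard Lifting-the-Exponent (LTE) lemma. First I would observe that the hypothesis $p \mid q^m-1$ forces $\gcd(p,q)=1$, so $q$ has a well-defined multiplicative order in $(\ZZ/p\ZZ)^\times$, namely $e$. By the basic theory of cyclic groups, $q^m \equiv 1 \pmod p$ gives $e \mid m$, yielding $m=eb$; and by Fermat's little theorem $e \mid p-1$, which is exactly the congruence $p \equiv 1 \pmod e$.

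For part (a), I would invoke the LTE lemma for odd primes: if $p$ is odd with $p \mid x-y$ and $p \nmid xy$, then $v_p(x^b-y^b)=v_p(x-y)+v_p(b)$, where $v_p$ denotes the $p$-adic valuation. Applying this with $x=q^e$, $y=1$ (noting $p \mid q^e-1$ and $p \nmid q^e$), one gets
\[
v_p(q^m-1)=v_p((q^e)^b-1)=v_p(q^e-1)+v_p(b),
\]
which in $p$-part notation reads $(q^m-1)_p=(q^e-1)_p \cdot b_p$.

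For part (b), I would first note that $p=2$ and $p \mid q^m-1$ force $q$ to be odd, so $q \equiv 1 \pmod 2$ and therefore $e=1$. Then I would apply the LTE formulae at the prime $2$: if $m$ is odd, $v_2(q^m-1)=v_2(q-1)$, giving $(q^m-1)_2=(q-1)_2$; if $m$ is even,
\[
v_2(q^m-1)=v_2(q-1)+v_2(q+1)+v_2(m)-1=v_2(q^2-1)+v_2(m/2),
\]
giving $(q^m-1)_2=(q^2-1)_2\cdot(m/2)_2$.

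There is no real obstacle here beyond citing the correct form of LTE; the only subtlety is remembering that the prime-$2$ version splits into the odd-$m$ and even-$m$ cases because $(1+q)$ contributes an extra factor of $2$ only when $m$ is even. Since both halves of LTE are completely standard (and in fact the authors provide references), the proof reduces to a verification that the hypotheses of each part of LTE are met.
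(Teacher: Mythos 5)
Your proposal is correct. Note that the paper itself does not prove this lemma at all: it is quoted with citations to \cite[Lemma 2.5]{guestp} (for $p=2$) and \cite[Lemma 4.1(iii)]{popiel}, so there is no in-paper argument to compare against. Your self-contained derivation via Lifting-the-Exponent is a standard and valid way to establish exactly these statements: the reduction $e\mid m$ and $e\mid p-1$ (hence $p\equiv 1\pmod e$) follows from the order of $q$ in $(\ZZ/p\ZZ)^\times$; the odd-$p$ case is LTE applied to $x=q^e$, $y=1$, giving $v_p(q^{eb}-1)=v_p(q^e-1)+v_p(b)$; and the $p=2$ case correctly splits according to the parity of $m$, with the identity $v_2(q-1)+v_2(q+1)+v_2(m)-1=v_2(q^2-1)+v_2(m/2)$ recovering the stated form for even $m$. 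All hypotheses of LTE ($p\nmid q$, $p\mid q^e-1$) are checked, so the argument is complete; it is essentially the same proof one finds in the cited sources.
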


The second is a technical result about semilinear groups. 

\begin{lemma}\label{lem:field}
Let $p$ and $r$  be distinct primes, $f$ a positive integer divisible by $p$ and $d\geq 2$ a positive integer coprime to $p$ such that $p$ divides $r^{df}-1$. Let $\Sigma=\GF(r^{df})\backslash\{0\}$ and let $X=\langle \widehat{\xi},\phi\rangle$ be a Sylow $p$-subgroup of $\GammaL_1(r^{df})$ where $\widehat{\xi}:x\mapsto \xi x$ for some $\xi\in\GF(r^{df})$ of order $(r^{df}-1)_p$ and $\phi:x\mapsto x^{r^{df/f_p}}$. Let $Y=\langle \widehat{\xi}\rangle$ and $\sigma=\phi^{f_p/p}$. Then
\begin{enumerate}[(a)]
\item $p$ divides $r^{df/p}-1$.
    \item The group $X$ has exactly $p$ subgroups of order $p$ that intersect trivially with $Y$, and each is conjugate to $\langle\sigma\rangle$.
       \item $|C_Y(\sigma)|=(r^{df}-1)_p/p$.
        \item $\sigma$ fixes at most $r^{df/p}-1$ one-dimensional $\GF(r^f)$-subspaces of $\GF(r^{df})$.
     \item Suppose that $P\leqslant X$ with $Y<P$ and $P$ acts on a set $\Omega$ with all orbits having size $(r^{df}-1)_p$ and $Y$ acting semiregularly. Then $\langle Y,\sigma\rangle\leqslant P$ and $\sigma$ has $|\Omega|/p$ fixed points.
   
\end{enumerate}
\end{lemma}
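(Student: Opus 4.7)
The plan is to reduce every part of the lemma to a single $p$-adic valuation computation, namely that $v_p(r^{df/p}-1) = a-1$, where $a := v_p(r^{df}-1)$.

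\textbf{Part (a).} Let $e := \mathrm{ord}_p(r)$, so $e\mid p-1$ and $(e,p)=1$. The hypothesis $p\mid r^{df}-1$ gives $e\mid df$, and since $p\mid f$ while $(e,p)=1$, also $e\mid df/p$, proving $p\mid r^{df/p}-1$.

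\textbf{Part (c).} Using Lemma~\ref{lem:ppd} (equivalently LTE), I compute $v_p(r^{df/p}-1) = v_p(r^e-1)+v_p(df/(ep)) = a-1$. Setting $s \equiv r^{df/p} \pmod{p^a}$, this says $s = 1 + cp^{a-1}$ with $(c,p)=1$, so the conjugation action $\sigma\widehat{\xi}\sigma^{-1} = \widehat{\xi}^s$ has centralizer $C_Y(\sigma) = \{\widehat{\xi}^i : (s-1)i \equiv 0 \pmod{p^a}\}$ of order $p^{a-1}$, as required.

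\textbf{Parts (b) and (d).} Any subgroup of $X$ of order $p$ disjoint from $Y$ maps injectively into the cyclic quotient $X/Y \leq \langle\phi\rangle Y/Y$, hence lies in the preimage $Z := \langle Y,\sigma\rangle$ of its unique order-$p$ subgroup. By (c), $Z$ is the modular maximal-cyclic $p$-group of order $p^{a+1}$; a direct count, using $v_p(1+s+s^2+\cdots+s^{p-1})=1$ to identify which $y^i\sigma^j$ have order $p$, produces exactly $p(p-1)$ elements of order $p$ in $Z\setminus Y$, forming $p$ cyclic subgroups. These coincide with the $|Z:C_Z(\sigma)|=p$ conjugates of $\langle\sigma\rangle$ in $Z$, since computing $y^i\sigma y^{-i}=\sigma y^{i(s^{-1}-1)}$ and $v_p(s^{-1}-1)=a-1$ shows the $p$ conjugates lie in $p$ distinct subgroups. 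For (d), a line $\GF(r^f)\cdot v$ is $\sigma$-fixed iff $v^{r^{df/p}-1}\in\GF(r^f)$, iff $v^{(r^{df/p}-1)(r^f-1)}=1$; there are $\gcd((r^{df/p}-1)(r^f-1), r^{df}-1) \leq (r^{df/p}-1)(r^f-1)$ such $v$, and dividing by the $r^f-1$ nonzero scalars per line gives the bound $r^{df/p}-1$.

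\textbf{Part (e).} The containment $\langle Y,\sigma\rangle \leq P$ is immediate, since $P/Y$ is a nontrivial subgroup of the cyclic group $X/Y$ and so contains its unique subgroup of order $p$. For the fixed-point count, fix a $P$-orbit and identify it with $Y$ via a base point $\omega$ (possible since $Y$ is regular on the orbit); then $\sigma$ acts as $y\mapsto y^s y_0$ for some $y_0\in Y$. The relation $\sigma^p=1$ forces $y_0^T=1$ with $T=1+s+\cdots+s^{p-1}$ of $p$-valuation $1$, so $y_0$ has order dividing $p$, i.e.\ $y_0 \in Y[p]$. By (c), $Y^{s-1}$ is also of order $p$, so $Y^{s-1}=Y[p]$ and hence $y_0\in Y^{s-1}$; the fixed-point equation $y^{1-s}=y_0$ then has exactly $p^{a-1}$ solutions per orbit, giving $|\Omega|/p^a \cdot p^{a-1} = |\Omega|/p$ fixed points in total. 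The main obstacle is the valuation step in (c), which must be carried out carefully (especially for $p=2$, where the standard form of LTE requires splitting on the parity of $df/p$); once (c) is established, (b), (d), (e) follow as combinatorial consequences of the modular structure of $Z$, a trivial gcd bound, and an orbit-by-orbit identification, respectively.
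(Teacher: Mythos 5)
Your proof is correct and follows essentially the same route as the paper's: the same divisibility argument $e\mid df/p$ for (a), the same key valuation identity $(r^{df/p}-1)_p=(r^{df}-1)_p/p$ (via Lemma~\ref{lem:ppd}) driving (b), (c) and (e), an identical argument for (d), and for (e) an orbit-by-orbit count that differs only cosmetically in that you analyse the affine map $y\mapsto y^s y_0$ at an arbitrary base point instead of first moving the base point to one stabilised by $\sigma$. One shared caveat: for $p=2$ with $r\equiv 3\pmod 4$ and $df/2$ odd the valuation identity (and hence part (c)) genuinely fails, but the paper's own proof has exactly the same lacuna and the lemma is only ever invoked with $p$ odd, so your explicit flag that $p=2$ needs a parity split is, if anything, more careful than the original.
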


\begin{proof}
Let $e=o(r\ \text{mod} \ p)$. Then by Lemma \ref{lem:ppd}, $p\equiv 1\pmod e$ and $e$ divides $df$. Hence $e$ divides $df/p$ and so $p$ divides $r^{df/p}-1$.

 The subgroups 
of $X$ of order $p$ that intersect trivially with $Y$ have the form 
    $\langle \widehat{\xi}^j\sigma\rangle$ for some integer $j$. Since $\sigma^{-1} \widehat{\xi}\sigma =\widehat{\xi}^{r^{df/p}}$ we have that $$
    (\widehat{\xi}^j\sigma)^p=\widehat{\xi}^{j(1+x+x^2+\cdots+x^{p-1})}=\widehat{\xi}^{j(x^p-1)/(x-1)}
    $$ where $x=r^{df(p-1)/p}$.  By Fermat's Little Theorem, $x^{p-1}\equiv 1\pmod p$ and so $p$ divides $x^p-1$ precisely when $p$ divides $x-1$. Moreover, in this case Lemma \ref{lem:ppd} implies that $((x^p-1)/(x-1))_p=p$. Thus if $\widehat{\xi}^j\sigma$ has order $p$ then $(r^{df}-1)_p/p$ divides $j$ and so there are at most $p$ subgroups of order $p$ of the form $\langle \widehat{\xi}^j\sigma\rangle$ in $X$. Moreover, by Lemma \ref{lem:ppd} and part (a), $(r^{df}-1)_p=(r^{df/p}-1)_pp$ and so $\xi^p\in\GF(r^{df/p})$, which is centralised by $\sigma$. Since $|\xi|$ does not divide $r^{df/p}-1$ it follows that $\xi\notin\GF(r^{df/p})$. Thus $C_X(\sigma)=\langle \widehat{\xi}^p,\phi\rangle$ and so $\langle\sigma\rangle$ has $p$ conjugates in $X$. Hence all subgroups of $X$ of order $p$ and that intersect trivially with $Y$ are conjugate in $X$ and parts (b) and (c) hold.
    
If $\sigma$ fixes the $\GF(r^f)$-subspace spanned by $\eta$ then $\eta^{r^{df/p}}=\lambda\eta$ for some $\lambda\in\GF(r^f)$. Hence $\eta^{(r^{df/p}-1)(r^f-1)}=1$ and so there are at most $(r^{df/p}-1)(r^f-1)$ such elements $\eta$. As each such one-dimensional subspace contains $r^f-1$ nonzero elements it follows that there are at most $r^{df/p}-1$ one-dimensional $\GF(r^f)$-subspaces fixed by $\sigma$. Thus (d) holds.

Suppose now that $P\leqslant X$ with $Y<P$ and $P$ acts on a set $\Omega$ with all orbits having size $(r^{df}-1)_p$ and $Y$ acting semiregularly. Since $|Y|=(r^{df}-1)_p$, this means that $Y$ is transitive (regular) on each of the $P$-orbits in $\Omega$. Then for all $\omega\in\Omega$ we have that $|P_\omega|= |P|/(r^{df}-1)_p=|P:Y|\geqslant p$. Hence $P_\omega$ contains a subgroup of order $p$ that intersects trivially with $Y$ and so by part (b) we may assume that $\langle Y,\sigma\rangle\leqslant P$ and for each orbit $O$ of $Y$ there exists $\omega\in O$ such that $\langle\sigma\rangle\leqslant P_\omega$. Moreover, the elements of $O$ can be identified with the elements of $Y$ such that $\omega$ corresponds to $1$ and $\sigma$ acts on $O$ as an automorphism of $Y$. Hence by (b), $\sigma $ fixes $(r^{df}-1)_p/p$ elements of $O$ and so fixes $|\Omega|/p$ elements of $\Omega$ as in part (e).
\end{proof}

The third preliminary result is the following number theoretic lemma.

\begin{lemma}\label{lem:morenum}
Suppose $p$ and $r$ are prime numbers, with $p$ odd, and let $f$ be an integer at least $3$, such that $p$ divides $f$ and $r^f+1$.
Then
\[
\frac{r^f+1}{p}\ge r^{2f/p}-1,
\]
with equality if and only if $(r,p,f)=(2,3,3)$.
\end{lemma}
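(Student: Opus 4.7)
The plan is to reduce the inequality to a one-variable polynomial inequality by setting $x := r^{f/p}$. Since $p$ is odd and $p \mid f$, one has $f \geq p \geq 3$ and hence $f/p \geq 1$, so $x = r^{f/p} \geq r \geq 2$. Multiplying the desired inequality through by $p$ and rearranging, it becomes
\[
g(x) := x^p - p x^2 + p + 1 \geq 0,
\]
with equality iff $x = 2$ and $p = 3$, which corresponds precisely to $(r,p,f) = (2,3,3)$.

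First I would handle the case $p = 3$ by an explicit factorisation. One computes
\[
g(x) \;=\; x^3 - 3x^2 + 4 \;=\; (x+1)(x-2)^2,
\]
which is non-negative for all $x \geq 0$ and vanishes precisely at $x = 2$. Since $x = r^{f/3}$ is a prime power, $x = 2$ forces $r = 2$ and $f/3 = 1$, recovering exactly the triple $(2,3,3)$ claimed in the equality case.

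For $p \geq 5$ I would show the inequality is strict, ruling out further equality cases. A short induction on $p$ yields $2^{p-2} \geq p$ for every odd prime $p \geq 5$. Hence for $x \geq 2$,
\[
x^{p-2} \;\geq\; 2^{p-2} \;\geq\; p,
\]
so $x^p \geq p x^2$, and therefore $g(x) \geq p+1 > 0$.

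The main (and essentially only) subtlety is the equality case when $p = 3$; the clean factorisation $g(x) = (x+1)(x-2)^2$ simultaneously provides both the inequality and the identification of its unique equality point, which via the substitution $x = r^{f/p}$ gives exactly the triple $(2,3,3)$. I note that the hypothesis $p \mid r^f + 1$ is not actually needed for the inequality itself; it is a contextual assumption inherited from how the lemma will be applied, and the bound $\tfrac{r^f+1}{p} \geq r^{2f/p} - 1$ in fact holds for every $r \geq 2$, every odd prime $p$ dividing $f$, and every $f \geq 3$.
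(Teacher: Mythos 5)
Your proof is correct, and it takes a genuinely different route from the paper's. You substitute $x=r^{f/p}$ (legitimate since $p\mid f$ and $f\ge p$ give $f/p\ge 1$, so $x\ge 2$) and reduce everything to the single polynomial inequality $x^p-px^2+p+1\ge 0$, which you settle by the factorisation $x^3-3x^2+4=(x+1)(x-2)^2$ when $p=3$ and by the crude bound $x^{p-2}\ge 2^{p-2}\ge p$ when $p\ge 5$; the equality analysis then comes for free from the double root at $x=2$. The paper instead runs an induction on $f$ (proving the inequality for \emph{all} integers $f\ge 3$ with $r^{2f/p}$ as a real power, not just multiples of $p$), which requires a small calculus estimate showing $x^2(x-x^{2/3})-\tfrac{4}{3}(x-1)>0$ for $x\ge 2$, and then treats the equality case by a separate arithmetic argument: setting $q_0=r^{f/p}$, deducing $q_0^2-1\mid q_0^p+1$ to force $q_0=2$, and invoking Fermat's little theorem on $2^p+1=3p$ to force $p=3$. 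Your version is more elementary and self-contained --- no calculus, no induction, and the inequality and its equality case are handled simultaneously --- at the cost of only proving the statement for $f$ a multiple of $p$, which is all the lemma asserts and all the application needs. Your closing observation that the hypothesis $p\mid r^f+1$ is not used in the proof of the inequality itself is accurate; it is likewise unused in the paper's proof and serves only to make $(r^f+1)/p$ an integer in the context where the lemma is applied.
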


\begin{proof}
The proof is by induction on $f$. If $f=3$, then $p= 3$ and, for all p[rimes $r\geq 2$,]
\[
\frac{r^f+1}{p}=\frac{r^3+1}{3} \ge r^{2}-1=r^{2f/p}-1.
\]
Suppose now that the result holds for some $f\ge 3$. Then
\begin{align*}
\frac{r^{f+1}+1}{p}&=r\cdot \frac{r^f+1}{p}+\frac{1}{p}(1-r)\\
&\ge r\cdot (r^{2f/p}-1)+\frac{1}{p}(1-r)\\
&=r^{2(f+1)/p}-1+r^{2f/p}(r-r^{2/p})+(1-r)(1+1/p).
\end{align*}
We will show that 
\[
r^{2f/p}(r-r^{2/p})+(1-r)(1+1/p)\ge 0.
\]
By elementary calculus, the function $x^2(x-x^{2/3})- \frac{4}{3}(x-1)$ is positive except for values of $x$ in the interval $[1,\varepsilon]$ where
$\varepsilon \approx 1.823$. So, since $r\ge 2$, we have $r^2(r-r^{2/3})\ge \frac{4}{3}(r-1)$ and hence
\[
r^{2f/p}(r-r^{2/p})\ge \frac{4}{3}(r-1)\ge (r-1)(1+1/p).
\]
Therefore, $r^{2f/p}(r-r^{2/p})+(1-r)(1+1/p)\ge 0$ and the inequality holds for $f+1$, and hence, by induction, for all $f\geq3$.

Finally, we consider when equality is attained: $\frac{r^f+1}{p}= r^{2f/p}-1$.
Let $q_0:=r^{f/p}$. Then $q_0^2-1$ divides $q_0^p+1$ and hence $q_0=2$. So $r=2$ and $f=p$. Therefore, $2^p+1=3p$.
By Fermat's little theorem $2^p\equiv 2\pmod{p}$ and so $0\equiv 3p \pmod{p}\equiv 2^p+1 \pmod{p}\equiv 2+1\pmod{p}$. Hence $p=3$.
\end{proof}

We now prove Theorem \ref{thm:2trans}.
\medskip

\noindent\emph{Proof of Theorem \ref{thm:2trans}.}
Let $p$ be a prime such that $G$ is a 2-transitive group with Property~$(*)_p$ of degree $n$. By Burnside's Theorem \cite[Theorem 4.1B]{DM}, $G$ is either affine or almost simple. If $G$ is affine then $n=r^k$ for some prime $r$, and by Lemma \ref{lem:elementary}(3) $G$ has Property~$(*)_p$ if and only if $p=r$. Thus $pn_p>n$ and so (a) holds. Hence it remains to consider the case where $G$ is almost simple. The almost simple 2-transitive permutation groups are listed in \cite[Table 7.4]{Cameron}.
If $\soc(G)=A_n$ then by Tornier \cite{tornier} we have that $pn_p>n$ and again case (a) holds. Also, if $n_p=p$ then one of (b), (c) or (d) holds by Theorem \ref{thm:CP}, so we also assume that $n_p\geq p^2$. 
We now work through the remaining possibilities for $\soc(G)$.

\medskip
\begin{enumerate}
\item \textsc{$\soc(G)=\PSL_d(q)$ with $n=\frac{q^d-1}{q-1}$ and $d\geq 2$:}

Suppose first that $\soc(G)=\PSL_d(q)$ and let $T=\soc(G)$. By Lemma~\ref{lem:elementary}(5), $T$ also has Property~$(*)_p$, so we will assume to start with that $G=T$. If $p\,n_p>n$ then case (a) holds (for both $G$ and $T$),  so we may assume that $n=n_pc$ with $c>p$ (since $c\ne p$ by the definition of $n_p$). (We will try not to use case (b) since it may hold for $T$ but not for $G$.)
In particular $n_p\geq p$ and $p$ divides $(q^d-1)/(q-1)$.  Let $e=o(q\ \text{mod}\ p)$, the least positive integer such that $p$ divides $q^e-1$. Let $(q^e-1)_p=p^a$. By Lemma~\ref{lem:ppd}, $d=eb$, for some $b$, and if $p$ is odd then $(q^{d}-1)_p=p^a\cdot b_p$, while if $p=2$ then $(q^{d}-1)_p$ is given by  Lemma~\ref{lem:ppd}(b). Note that, if $e>1$ then $p$ is odd, for if $p=2$ divides $q^d-1$ then $q$ is odd and so $e=1$.

\medskip
\begin{itemize}
    \item 
\textsc{Case both $b, e\geq2$.} \  As we just noted, $p$ is odd, and since in this case $p$ does not divide $q-1$, we will argue in $\GL_d(q)$ acting on the set of $1$-spaces of the underlying vector space $V=\mathbb{F}_q^d$. Then $P$ preserves a decomposition $V=\oplus_{i=1}^b U_i$ with $\dim(U_i)=e$ for each $i$, and $P$ contains $\prod_{i=1}^bP_i$ with $P_i= C_{p^a}$ a Sylow $p$-subgroup of $\GL(U_i)$. Now $P_i$ acts semiregularly on the nonzero vectors of $U_i$. Choose a nonzero $u_i\in U_i$ for each $i$, and set $u=\sum_{i=1}^bu_i$. Then the $P$-orbit on $1$-spaces containing $\langle u\rangle$ has size at least $p^{ab}$. On the other hand $n_p=(q^d-1)_p$ (since $e\geq 2$), and this equals $p^a.b_p$ which is strictly less than $p^{ab}$ since $p$ is odd,  a contradiction to Lemma~\ref{lem:easy}. 

\medskip
\item \textsc{Case $b\geq2$ but $e=1$.} Here $d=b$ and the pre-image $\widetilde{P}$ of $P$ in $\SL_d(q)$ has order $p^{a(d-1)}$ and acts on the set of $1$-spaces of $V$ with kernel of order $\gcd(d,(q-1)_p)=\gcd(d,p^a)$. There is a $\widetilde{P}$-invariant decomposition $V=\oplus_{i=1}^d U_i$ with this time each $\dim(U_i)=1$.  Choosing $u_i$ and $u$ as in the previous case we obtain an orbit of $\widetilde{P}$ on nonzero vectors of size at least $p^{a(d-1)}$, and hence a $P$-orbit on 1-spaces of length at least $p^{a(d-1)}/\gcd(d,p^a)\geq p^{a(d-2)}$. This time  $n_p=(q^d-1)_p/(q-1)_p=(q^d-1)_p\cdot p^{-a}$ (since $e= 1$). Since $n_p>1$ it follows from Lemma~\ref{lem:ppd} that 
$n_p=d_p$ if $p$ is odd, while if $p=2$ then $d$ is even and $n_2=(q+1)_2\cdot (d/2)_2$. Suppose first that either $p$ is odd, or $p=2$ and $q\equiv 1\pmod{4}$. Then $n_p=d_p$. If $d\geq 4$ or $d=3$ and $p\geq 3$ then $p^{a(d-2)}> d\geqslant d_p=n_p$, which is a contradiction to Lemma~\ref{lem:easy}. If $d=3$ and $p=2$ then $p^{a(d-1)}/\gcd(d,p^a)=2^{2a}>d_2=1$, another contradiction. Similarly, if $d=2$ and $p$ is odd then $p^{a(d-1)}/\gcd(d,p^a)=p^a>d_p=1$ while if $d=2$ and $q\equiv 1\pmod 8$ then $p^{a(d-1)}/\gcd(d,p^a)=2^{a-1}\geqslant 4>d_2=2$ and we again get a contradiction. When $d=p=2$ and $q\equiv 5\pmod 8$ we have that $n_2=2$, which contradicts the fact that $n_p\geq p^2$.

It remains to consider the case where $d$ is even, $p=2$ and $q\equiv 3\pmod 4$. Hence $n_2= (q+1)_2\cdot (d/2)_2 = \frac{1}{2}x\cdot (d/2)_2$, where $x=(q^2-1)_2$. For this case we observe that $\widetilde{P}$ contains a subgroup $Q$ of index 2 in  $\prod_{i=1}^{d/2}Q_i$ with each $Q_i\cong C_x$, and $Q$ leaves invariant a decomposition $V=\oplus_{i=1}^{d/2}W_i$ with each $W_i$ of dimension $2$.  The usual argument produces a $Q$-orbit on $1$-spaces of size at least $\frac{1}{4}x^{d/2}$. This  is strictly greater than $n_2$ if $d>2$, contradicting Lemma~\ref{lem:easy}, so we are left with $d=2$ and $q\equiv 3\pmod{4}$. In this last case $q=r^f$ for some odd prime $r$, and $f$ must also be odd. Hence $|\Out(T)|=2f$. If $|G/T|$ is odd, then $|G|_2=n_2$ so $|G_\omega|$ is odd and case (b) holds. So we may assume that $G$ contains $\PGL_2(q)$ and hence that $|P|\geqslant (q^2-1)_2=2n_2$. By Lemma~\ref{lem:elementary}(5), $H=\PGL_2(q)$ has Property $(*)_2$ also. As we are assuming that $n>n_2$ we have that $n=q+1$ is not a power of 2. Now a Sylow $2$-subgroup of $\PGL_2(q)$ is a dihedral subgroup of $D_{2(q+1)}$, and has at least one orbit of each length $n_2$ and $2\,n_2$, contradicting Lemma~\ref{lem:easy}.

\medskip
\item \textsc{Case $b=1$ and so $e=d$.}
The remaining case is $b=1, e=d$, so $p$ is a primitive prime divisor of $q^d-1$. Since $p$ does not divide $q-1$ we again argue in $\GammaL_d(q)$.  Let $q=r^f$ with $r$ prime and $f\geq 1$. Then the $p$-part of $|\Out(T)|$ is $f_p$. If $p$ does not divide $|G/T|$ then $|G_\omega|$ is not divisible by $p$ and we have case (b). So we may assume that $f_p>1$ and that $p$ divides $|G/T|$. Moreover, we may assume that $(r,f,d)\neq (2,3,2)$ as in this case $n=9$ and hence $p=3$ and $n=p^2$, so we are in case (a). Furthermore, since $p\equiv 1\pmod e$ we have that $p$ is coprime to $d$. Thus $p,r,f$ and $d$ satisfy the hypotheses of Lemma \ref{lem:field} and we consider $V$ as $\GF(q^d)$. Moreover, since $G$ has Property~$(*)_p$ we may assume that $P\leqslant\GammaL_1(r^{df})$ and satisfies the hypotheses on the group $P$ in Lemma \ref{lem:field}(e) with $Y=P\cap\GL_d(q)$. In particular, $\sigma$, as defined in the statement of Lemma \ref{lem:field}, lies in $P$ and has $n/p$ fixed points on $\Omega$. However, by Lemma \ref{lem:field}(d), $\sigma$ has at most $r^{df/p}-1$ fixed points on $\Omega$. If $d=2$ then $n=r^f+1$ and so this contradicts Lemma \ref{lem:morenum}. Thus $d\geq 3$.  However, since $p>d$ it follows that $r^{df/p}-1<q$ and since $p$ divides $r^{df/p}-1$ (by Lemma \ref{lem:field}(a)) we have that $p<q$. Hence $n=q^{d-1}+q^{d-2}+\cdots +q+1>pq$, and so $n/p> q> r^{df/p}-1$. Thus we again get a contradiction to the fact that $\sigma$ has $n/p$ fixed points.

\end{itemize}

\medskip

\item \textsc{$\soc(G)=\PSU_3(q)$ with $n=q^3+1$:}

Suppose that $\soc(G)=\PSU_3(q)$ and let $T=\soc(G)=\PSU_3(q)$. Then $G\leqslant \PGammaU_3(q)$ with $q=r^f$. Let $V$ be a 3-dimensional vector space over $\GF(q^2)$ equipped with a $G$-invariant nondegenerate hermitian form $B$. Moreover,  $\Omega$ is the set of totally isotropic $1$-subspaces of $V$ with respect to the form $B$. Now $|\PSU_3(q)|=\frac{1}{(3,q+1)} q^3(q^2-1)(q^3+1)$  and $\PGammaU_3(q)=q^3(q^2-1)(q^3+1)2f$. Suppose that we are not in case (b) and so $p$ divides $|G_\omega|$. Then $p$ divides $(q^2-1)2f$. Note that if $p=2$ then $q$ is odd and $p$ divides $q^2-1$. Thus we have two cases to consider: $p$ divides $q^2-1$ or $p$ divides $f$.

\begin{itemize}
\item \textsc{Case $p$ divides $q^2-1$.} Now \begin{align*}
\gcd(q^3+1,q^2-1)&=\gcd( (q+1)(q^2-q+1), q^2-1) )\\
&=\gcd( (q+1)(q^2-1)+(q+1)(-q+2), q^2-1 )\\
&=(q+1)\gcd( -q+2, q-1 )\\
&=q+1
\end{align*}
and so $p$ divides $q+1$. Thus by \cite[Lemma A.4]{BGbook} we have $$n_p=\left\{\begin{array}{ll}
                             (q+1)_p  &  \textrm{ if $p\neq 3$}\\
                             3(q+1)_3   &  \textrm{ if $p=3$.}
                                \end{array}\right.$$
                                
Let $v_1,v_2,v_3$ be an orthonormal basis for $V$ and consider the subgroup $X$ of $\PGU_3(q)$ whose preimage in $\GU_3(q)$ is
$$\widetilde{X}=\left\{\begin{pmatrix} \lambda_1 &0&0\\0&\lambda_2 &0\\ 0&0 &\lambda_3 \end{pmatrix} \mid \lambda_i^{q+1}=1\right\}.$$
 Then $|X|=(q+1)^2$. Consider the set $\Sigma$ of 1-dimensional subspaces of $V$ of the form $\langle a_1v_1+a_2v_2+v_3\rangle$ for $a_1,a_2\in\GF(q^2)$. If $$g=\begin{pmatrix} \lambda_1 &0&0\\0&\lambda_2 &0\\ 0&0 &\lambda_3 \end{pmatrix} \in\widetilde{X}$$ then $\langle a_1v_1+a_2v_2+v_3\rangle^g=\langle \lambda_1\lambda_3^{-1}a_1v_1+\lambda_2\lambda_3^{-1}a_2v_2+v_3\rangle$. Thus $g$ fixes an element of $\Sigma$ precisely if $\lambda_1=\lambda_2=\lambda_3$, in which case $g$ corresponds to the identity element of $X$. Hence $X$ acts semiregularly on $\Sigma$.
 Now $B(a_1v_1+a_2v_2+v_3,a_1v_1+a_2v_2+v_3)=1+a_1^{q+1}+a_2^{q+1}$. Since $-1-a_2^{q+1}\in\GF(q)$ for all $a_2\in\GF(q^2)$, it follows that for any $a_2$ there exists $a_1\in\GF(q^2)$ such that $a_1^{q+1}=-1-a_2^{q+1}$ and so $\Sigma$ contains totally isotropic 1-subspaces. In particular, $X$ has a regular orbit on $\Omega$.

Let $Q$ be the unique Sylow $p$-subgroup of $X\cap T$ and let $P$ be a Sylow $p$-subgroup of $G$ containing $Q$. Note that the preimage of $X\cap T$ in $\widetilde{X}$ consists of all elements of $\widetilde{X}$ such that $\lambda_1\lambda_2\lambda_3=1$. Suppose first that $p\neq 3$. Then $|Q|=(q+1)_p^2$. Since $X$ has a regular orbit on $\Omega$ it follows that $P$ has an orbit of size at least $((q+1)_p)^2>n_p=(q+1)_p$, contradicting Property~$(*)_p$. Thus $p=3$. Then $|Q|=(q+1)_3^2/3$.  Let $\sigma\in\PSU_3(q)$ be the element of order 3 whose preimage in $\SU_3(q)$  cyclically permutes $v_1,v_2$ and $v_3$. Then $\sigma$ normalises $X$ and $Q$. In particular, $\widehat{Q}=\langle Q,\sigma\rangle$ is a 3-subgroup of $T$ of order $(q+1)_3^2$, and we may assume that $\widehat{Q}\leqslant P$. As discussed in the previous paragraph, there exists $a_1\in\GF(q^2)$ such that $a_1^{q+1}=-2$ and so $\omega=\langle a_1v_1+v_2+v_3\rangle$ is a totally singular subspace in $\Sigma$. Now if $\langle b_1v_1+b_2v_2+v_3\rangle\in \omega^{Q}$ then $b_2^{q+1}=1$.
However, $\omega^\sigma=\langle v_1+a_1v_2+v_3\rangle$ and $a_1^{q+1}=-2\neq 1$. Hence $\omega^Q\subset \omega^{\widehat{Q}}$ and so $|\omega^{\widehat{Q}}|=(q+1)_3^2$. If $(q+1)_3>3$ then  it follows that $P$ has an orbit of length greater than $n_3=3(q+1)_3$, contradicting $G$ having Property $(*)_3$. Thus $(q+1)_3=3$. Since $3$ divides $|G_\omega|$ it follows that either $\PGU_3(q)\leqslant G$ or $3$ divides $f$. In the first case, taking $Q$ to be the unique Sylow $3$-subgroup of $X$ and $\widehat{Q}=\langle Q,\sigma\rangle$, we again see that $|\omega^{\widehat{Q}}|=|\widehat{Q}|=(q+1)_3^23>n_3$, contradicting Property $(*)_3$.  Finally, if $3$ divides $f$ and $q+1=r^f+1$, note that $3$ does not divide $r^f-1$ and so does not divide $r-1$. Hence $3$ divides $r+1$ and so by \cite[Lemma A.4]{BGbook}, $(q+1)_3=(r+1)_3f_3>3$, contradicting our assumption that $(q+1)_3=3$. 

\item \textsc{Case $p$ divides $f$ but not $q^2-1$.} Recall that $p$ divides $n=q^3+1$. Also, in this case $p\neq 3$ as $(3,q)=1$ and so 3 divides either $q-1$ or $q+1$. In particular, a Sylow $p$-subgroup of $\PGU_3(q)$ is contained in $\PSU_3(q)$. Moreover, as $p$ divides $|G_\omega|$ we have that $p$ divides $|G:\PSU_3(q)|$. Let $V=\GF(q^6)$, treated as a vector space over $\GF(q^2)$. Then the map $B(v,w)=\mathrm{Tr}_{q^6\rightarrow q^2}(vw^{q^3})$ is a nondegenerate hermitian form on $V$ and the totally singular vectors are all the elements $v\in \GF(q^6)$ such that $\mathrm{Tr}_{q^6\rightarrow q^2}(v^{q^3+1})=0$. Letting $d=6$ we have that $p,r,f$ and $d$ satisfy the hypotheses of Lemma \ref{lem:field}. Moreover, as $G$ satisfies Property~$(*)_p$ we have that $P$ is a subgroup of $\GammaL_1(r^{6f})$ and satisfies the hypotheses of part Lemma \ref{lem:field}(e) with $Y$ being the Sylow $p$-subgroup of $\PSU_3(q)$. In particular, $\sigma$, as defined in the statement of Lemma \ref{lem:field}, lies in $P$ and has at most $n/p$ fixed points on $\Omega$. However, $p<q$ and so $n/p=(q^3+1)/p>q$ while by Lemma \ref{lem:field}(d) we have that $\sigma$ fixes at most $r^{3f/p}-1<q$ elements of $\Omega$, a contradiction.
\end{itemize}

\medskip

\item \textsc{$\soc(G)=\Ree(q)$ with $n=q^3+1$:} Suppose that $\soc(G)=\Ree(q)$ and let $T=\Ree(q)$. In this case we have $q=3^f$ for some odd integer $f\geq 3$. Now $|T|=q^3(q^3+1)(q-1)$ and $|\Out(T)|=f$. Now $\gcd(q^3+1,q-1)=2$. Thus if (b) does not hold then there are two cases to consider: $p=2$, or $p$ divides $f$ and $|G:T|$ is divisible by $p$. Note that $p\neq 3$. Let $P$ be a Sylow $p$-subgroup of $G$.

\medskip
\begin{itemize}
    \item \textsc{Case $p=2$:} Note that since $q=3^f$ for $f$ odd, \cite[Lemma A.4]{BGbook} implies that $(q^3+1)_2=(q+1)_2=4$ and $(q-1)_2=2$. Thus $P\leqslant T$ and $|P|=8$. Now $P$ is elementary abelian, all involutions in $G$ are conjugate and the centraliser of an involution in $T$ is $C_2\times \PSL_2(q)$, see \cite{KlG2}. Thus $T$ and hence $G$ contains $q^2(q^2-q+1)$ involutions. Moreover, given a point $\omega\in\Omega$, we have that $T_\omega=[q^3]:C_{q-1}$ where $[q^3]$ denotes  a subgroup of order $q^3$. Since $q^2$ does not divide the centraliser of an involution, it follows that $T_\omega$ and hence $G_\omega$ contains $q^2$ involutions. Hence double counting shows that each involution in $G$ fixes $q+1$ points. Let $O$ be an orbit of $P$ of length 4. Then there is a unique involution in $P$ that fixes $O$ pointwise, and the remaining involutions are derangements on $O$. Since $P$ contains 7 involutions it follows that $P$ has at most $7(q+1)/4$ orbits of size 4. However, for $q>3$ we see that $n/4> 7(q+1)/4$ and so by Lemma \ref{lem:easy}, $G$ does not have Property $(*)_2$, a contradiction. It is interesting to note that in the excluded case when $q=3$ we have that $G=\Ree(3)=\PGammaL_2(8)$ in its action on 28 points. Here $n/4=7$ and $P$ indeed has 7 orbits of length 4 and $G$ has Property $(*)_2$, as in part (e).

\medskip
\item \textsc{Case $p$ divides $f$:} In this case we have $p\geq 5$ and $p$ is a primitive prime divisor of $q^2-1$ or $q^6-1$. In both cases, $|\Aut(T)|_p=n_pf_p$ and a Sylow $p$-subgroup of $T$ acts semiregularly on $\Omega$. Now $\Ree(q)\leqslant\GL_6(q)$ and $\Omega$ can be identified with a set of 1-dimensional subspaces of $V=\GF(q)^6$ of size $q^3+1$.

Suppose first that $p$ is a primitive prime divisor of $q^6-1$. Then $p\equiv 1\pmod 6$ and so $p\geq 7$. Letting $d=6$ and $r=3$ we have that $p,r,f$ and $d$ satisfy the hypotheses of Lemma \ref{lem:field}.   As $G$ satisfies Property~$(*)_p$ we have that $P$ is a subgroup of $\GammaL_1(r^{6f})$ that satisfies the hypotheses of part Lemma \ref{lem:field}(e) with $Y$ being the Sylow $p$-subgroup of $\Ree(q)$. In particular, $\sigma$, as defined in the statement of Lemma \ref{lem:field} lies in $P$ and has at most $n/p$ fixed points on $\Omega$. However, $p<q$ and so $n/p=(q^3+1)/p>q$ while by Lemma \ref{lem:field}(d) we have that $\sigma$ fixes at most $r^{3f/p}-1<q$ elements of $\Omega$, a contradiction. Thus $p$ is a primitive prime divisor of $q^2-1$. 

By \cite{KlG2}, $T$ has a maximal subgroup $H=C_2\times\PSL_2(q)$ which contains a Sylow $p$-subgroup of $T$. Moreover, as the centraliser of a field automorphism of order $p$ is $\Ree(3^{f/p})$, field automorphisms of $\Ree(q)$ of order a power of $p$ normalising $H$ induce field automorphisms on $\PSL_2(q)$.  Hence letting $d=2$ and $r=3$, we can view $P$ as a subgroup of $\GammaL_1(r^{2f})$ that satisfies the hypotheses of part Lemma \ref{lem:field}(e) with $Y$ being a Sylow $p$-subgroup of $\Ree(q)$ contained in $H$. Hence $\sigma$, as defined in the statement of Lemma \ref{lem:field}, lies in $P$ and has at most $n/p$ fixed points on $\Omega$.  Note that $n/p=(q^3+1)/p>q$ as $p<q$. By \cite[Proposition 4.91(d)]{GLS}, subgroups of $\Aut(T)$ of order $p$ and which intersect $T$ trivially are all conjugate. Hence the number of fixed points of $\sigma$ on $\Omega$ is the same as the number of fixed points as the element $\sigma$ in the previous paragraph. However, this number is at most $r^{3f/p}-1<q$, yielding a final contradiction.
\end{itemize}

\medskip

\item \textsc{$\soc(G)=\Sz(q)$ with $n=q^2+1$:} Suppose that $\soc(G)=\Sz(q)$ and let $T=\Sz(q)$. In this case we have $q=2^f$ for some odd integer $f\geq 3$. If $p$ divides $n$ then $p$ divides $q^4-1$. Hence $p$ is a primitive prime divisor of $q-1$, $q^2-1$ or $q^4-1$.
  Since $q$ is even, it is not possible for $p$ to divide $q^2+1$ and either $q-1$ or $q^2-1$. Hence $p$ is a primitive prime divisor of $q^{4}-1$. Now $|G_\omega|$ divides $q^2(q-1)f$. Thus either (b) holds, or $p$ divides $f$ and  $|G:T|$ is divisible by $p$. We consider the latter case. Letting $d=4$ and $r=2$ then $d$, $r$, $f$ and $p$ satisfy the hypotheses of Lemma \ref{lem:field}. 
   Now $\Sz(q)\leqslant \Sp_4(q)$ and $\Omega$ can be identified with a set of 1-dimensional subspaces of $V=\GF(q)^4$ of size $q^2+1$.  As $G$ satisfies Property~$(*)_p$ we have that $P$ is a subgroup of $\GammaL_1(r^{4f})$ that satisfies the hypotheses of part Lemma \ref{lem:field}(e) with $Y$ being the Sylow $p$-subgroup of $\Sz(q)$. In particular, $\sigma$, as defined in the statement of Lemma \ref{lem:field}, lies in $P$ and has at most $n/p$ fixed points on $\Omega$. However, $p<q$ and so $n/p=(q^2+1)/p>q$ while by Lemma \ref{lem:field}(d) we have that $\sigma$ fixes at most $r^{2f/p}-1<q$ elements of $\Omega$, a contradiction.

\medskip

\item \textsc{$\soc(G)=\Sp_{2d}(2)$ with $n=2^{2d-1}\pm 2^{d-1}$ and $d\geq 3$:} Suppose that $\soc(G)=\Sp_{2d}(2)$. Then $G=\soc(G)$.
In this case, for $\epsilon\in\{\pm\}$ there are sets $\Omega^\epsilon$ with $|\Omega^\epsilon|=2^{2d-1}+ \epsilon 2^{d-1}$ such that $G$ acts 2-transitively on each $\Omega^\epsilon$.  

Let $V$ be a ${2d}$-dimensional vector space over $\GF(2)$ and let $B$ be a nondegenerate alternating form on $V$ preserved by $G$. We say that a quadratic form $Q:V\rightarrow \GF(2)$ \emph{polarises} to $B$ if $B(v,w)=Q(v)+Q(w)-Q(v+w)$ for all $v,w\in V$. Then $G$ acts on the set of all such quadratic forms via $Q^g(v)=Q(v^{g^{-1}})$ for all $v\in V, g\in G$. Moreover, $G$ has two orbits, namely $\Omega^+$ and $\Omega^-$ where $\Omega^+$ is the set of all hyperbolic quadratic forms polarising to $B$ and $\Omega^-$ is the set of all elliptic quadratic forms polarising to $B$. In particular, if $Q\in\Omega^+$ then $G_Q\cong\GO^+_{2d}(2)$ while if $Q\in\Omega^+$ then $G_Q\cong\GO^-_{2d}(2)$.

Let $g\in G$ and let $Q\in \Omega^+\cup\Omega^-$ such that $Q^g=Q$. Then following \cite{Dye} and noting that since $x^2=x$ for all $x\in\GF(2)$, the elements of  $\Omega^+\cup\Omega^-$ are precisely the quadratic forms $Q_a$, for $a\in V$ given by
$$Q_a(v)=Q(v)+B(a,v).$$
Moreover, $(Q_a)^g(v)=Q_a(v^{g^{-1}})=Q(v^{g^{-1}})+B(a,v^{g^{-1}})=Q(v)+B(a^g,v)$ as $g$ preserves $Q$ and $B$.
Now $\GO^\epsilon_{2d}(2)$ has two orbits on the nonzero vectors of $V$ and these have size $2^{d-1}(2^d-\epsilon)$ (the nonsingular vectors) and $(2^d-\epsilon)(2^{d-1}+\epsilon)$ (the singular vectors). Suppose that $\GO^\epsilon_{2d}(2)$ fixes $Q$. Then the action of $\GO^\epsilon_{2d}(2)$ on $V$ is equivalent to the action of $\GO^\epsilon_{2d}(2)$ on $\Omega^+\cup\Omega^-$. Thus the quadratic forms $Q_a$ lying in the same $G$-orbit as $Q$ are those of the form $Q_a$ such that $Q(a)=0$. In particular, if $H\leqslant\GO^\epsilon_{2d}(2)\leqslant G$, then the orbits of $H$ on the set of singular vectors of $Q$ correspond to the orbits of $H$ on $\Omega^\epsilon\backslash\{Q\}$. Let $e_1,f_1\in V$ such that $Q(e_1)=Q(f_1)=0$ and $B(e_1,f_1)=1$. By \cite[4.1.20]{KL}, $\GO^\epsilon(2)_{e_1}=\widehat{P}\rtimes \GO^\epsilon_{2d-2}(2)$ where $|\widehat{P}|=2^{2d-2}$ and acts trivially on $\langle e_1,f_1\rangle^\perp$. Thus $\widehat{P}_{f_1}$ acts trivially on $V$ and so $|f_1^{\widehat{P}}|=|\widehat{P}|=2^{2d-2}$. Since $Q(f_1)=0$ we have that $Q_{f_1}\in\Omega^\epsilon$ and so $\widehat{P}$ has an orbit of length $2^{2d-2}>n_2=2^d$ on $\Omega^\epsilon$. (Recall that $d\geq 3$.) This contradicts Lemma \ref{lem:shortest} and so $p\neq 2$. 

To deal with the $p$ odd case we need to set up some more notation. Let $V=V_1\perp V_2\perp \ldots\perp V_k$ be an orthogonal decomposition of $V$ such that $B_i=B\mid_{V_i\times V_i}$ is a nondegenerate alternating form. Let $Q_i:V_i\rightarrow \GF(2)$ be a nondegenerate quadratic form on $V_i$ that polarises to $B_i$ such that $\mathrm{sign}(Q_i)=\epsilon_i$. For each $v\in V$ write $v=v_1+v_2+\cdots+v_k$ where each $v_i\in V_i$ and define $(Q_1\oplus Q_2\oplus\cdots\oplus Q_k)(v)=Q_1(v_1)+Q_1(v_2)+\cdots+Q_1(v_k)$. Then $Q=Q_1\oplus Q_2\oplus\cdots \oplus Q_k$ is a nondegenerate quadratic form that polarises to $B$ and $Q\in\Omega^\epsilon$ for $\epsilon=\Pi_{i=1}^k \epsilon_i$.  Moreover, for $g_i\in\Sp(V_i)$ we have that $g=(g_1,g_2,\ldots,g_k)\in G$ and $Q^g=Q_1^{g_1}\oplus Q_2^{g_2}\oplus\cdots\oplus Q_k^{g_k}\in\Omega^\epsilon$. See \cite{Mark} for more details.

Suppose first that $G$ has Property~$(*)_p$ on $\Omega^+$ with $p$ odd. Since $n=2^{d-1}(2^d+1)$ we have that  $p$ divides $2^d+1$ and  hence divides $2^{2d}-1$. Let $e=o(2\pmod p)$ and let $(q^e-1)_p=p^a$. Then by Lemma \ref{lem:ppd}, $2d=eb$ and $(q^{2d}-1)_p=p^ab_p$. If $e=2d$ then $n_p=|G_p|$ and so case (b) holds. Thus suppose that $e<2d$. If $e$ divides $d$ then $p$ divides $2^d-1$, which is coprime to $n$. Thus $e$ is even. Let $V=V_1\perp V_2\perp\ldots\perp V_b$ where each $V_i$ is a nondegenerate subspace of dimension $e$ and let $B_i$ be the restriction of $B$ to $V_i\times V_i$. Then $\Sp(V_i)$ acts transitively on the set $\Omega_i^+$ of $2^{e/2-1}(2^{e/2}+1)$ hyperbolic quadratic forms on $V_i$ that polarise to $B_i$. Let $P_i\in\Syl_p(\Sp(V_i))$. By Lemma \ref{lem:shortest} there exists a hyperbolic quadratic form $Q_i\in\Omega_i^+$ such that $|Q_i^{P_i}|=p^a$. Then $Q:=Q_1\oplus Q_2\oplus \cdots\oplus Q_b\in\Omega^+$. Moreover, letting $\widehat{P}=\Pi_{i=1}^b P_i$ we see that $|Q^{\widehat{P}}|=p^{ab}>n_p=p^ab_p$. Thus $G$ has a Sylow $p$-subgroup with an orbit of length greater than $n_p$,  contradicting Lemma \ref{lem:easy}.  Thus $G$ does not have Property~$(*)_p$ on $\Omega^+$.

Next suppose that $G$ has Property~$(*)_p$ on $\Omega^-$ with $p$ odd. Then $n=2^{d-1}(2^d-1)$ and so $p$ divides $2^d-1$.  Let $e=o(2\pmod p)$ and let $(q^e-1)_p=p^a$. Then by Lemma \ref{lem:ppd}, $d=eb$ and $(q^{d}-1)_p=p^ab_p$. Recall that $\dim(V)=2d=2eb$ in this case. Suppose first that  $e=d$ and $d$ is odd.  Then $n_p=|G|_p$ and so case (b) holds. Next suppose that $e$ is even. Let $V=V_1\perp V_2\perp\ldots\perp V_{2b}$ where each $V_i$ is a nondegenerate subspace of dimension $e$ and let $B_i$ be the restriction of $B$ to $V_i\times V_i$.  For $i=1,\ldots, 2b-1$, $\Sp(V_i)$ acts transitively on the set $\Omega_i^+$ of $2^{e/2-1}(2^{e/2}+1)$ hyperbolic quadratic forms on $V_i$ that polarise to $B_i$. Note that $(2^{e/2}+1)_p=p^a$. Let $P_i\in\Syl_p(\Sp(V_i))$. Then by Lemma \ref{lem:shortest} there exists a hyperbolic quadratic form $Q_i\in\Omega_i^+$ such that $|Q_i^{P_i}|=p^a$. Finally, note that $\Sp(V_{2b})$ acts transitively on the set $\Omega_{2b}^-$ of $2^{e/2-1}(2^{e/2}-1)$ hyperbolic quadratic forms on $V_{2b}$ that polarise to $B_{2b}$.  Let $P_{2b}\in\Syl(\Sp(V_{2b}))$ and choose $Q_{2b}\in\Omega_{2b}^-$ such that $|Q_{2b}^{P_{2b}}|\geq p$. Then $Q:=Q_1\oplus Q_2\oplus \cdots\oplus Q_{2b}\in\Omega^-$. Moreover, letting $\widehat{P}=\Pi_{i=1}^{2b} P_i$ we see that $|Q^{\widehat{P}}|\geq p^{a(2b-1)}p>n_p=p^ab_p$. Thus $G$ has a Sylow $p$-subgroup with an orbit of length greater than $n_p$,  contradicting Lemma \ref{lem:easy}. Next suppose that $e<d$ and $e$ is odd. Note that $b\geq 2$. In this case, let $V=V_1\perp V_2\perp\ldots\perp V_{b}$ where each $V_i$ is a nondegenerate subspace of dimension $2e$ and let $B_i$ be the restriction of $B$ to $V_i\times V_i$. For $i=1,\ldots, b-1$, $\Sp(V_i)$ acts transitively on the set $\Omega_i^+$ of $2^{e-1}(2^e-1)$ elliptic quadratic forms on $V_i$ that polarise to $B_i$.  Let $P_i\in\Syl_p(\Sp(V_i))$. Then by Lemma \ref{lem:shortest} there exists a hyperbolic quadratic form $Q_i\in\Omega_i^-$ such that $|Q_i^{P_i}|=p^a$. Finally, choose $\epsilon_{b}$ so that $(-1)^{b-1}\epsilon_b=-$. Let $P_{b}\in\Syl(\Sp(V_{b}))$ and choose $Q_{b}\in\Omega_{b}^{\epsilon_b}$ such that $|Q_{b}^{P_{b}}|\geq p$. Then $Q:=Q_1\oplus Q_2\oplus \cdots\oplus Q_{b}\in\Omega^-$. Moreover, letting $\widehat{P}=\Pi_{i=1}^{b} P_i$ we see that $|Q^{\widehat{P}}|\geq p^{a(b-1)}p>n_p=p^ab_p$, since $p\geq 3$. Thus $G$ has a Sylow $p$-subgroup with an orbit of length greater than $n_p$,  contradicting Lemma \ref{lem:easy}.  Thus $G$ does not have Property~$(*)_p$ on $\Omega^-$.

\medskip

\item \textsc{Others}

The isolated examples of $2$-transitive groups were dealt with by computer.
First, $\PSL(2,11)$ on 11 points, $M_{11}$ on 11 points, and $M_{23}$ on 23 points
 have prime degree and the result follows from Lemma \ref{lem:elementary}(3).
For the remainder, see Table \ref{tbl:sporadics} for details.

\end{enumerate}

\definecolor{light-gray}{gray}{0.90}

\begin{table}
\begin{tabular}{clcl}
\toprule
$n$ & $G$  & $p$  & Orbit lengths of a Sylow $p$-subgroup\\
\midrule
12 & $M_{11}$ & 2   & $4^{1},8^{1}$\\
\rowcolor{light-gray} & & 3   & $3^{4}$\\
 12 & $M_{12}$ & 2   & $4^{1},8^{1}$\\
 & & 3   & $3^{1},9^{1}$\\
 15 & $A_7$ & 3   & $3^{2},9^{1}$\\
 22 & $M_{22}$ & 2   & $2^{1},4^{1},16^{1}$\\
 22 & $M_{22}:2$ & 2   & $2^{1},4^{1},16^{1}$\\
24 & $M_{24}$ & 2   & $8^{1},16^{1}$\\
 & & 3   & $3^{2},9^{2}$\\
\rowcolor{light-gray} 28 & $\PGammaL(2,8)$ & 2   & $4^{7}$\\
 176 & $\mathrm{HS}$ & 2   & $16^{1},32^{1},64^{2}$\\
276 & $\mathrm{Co}_3$ & 2   & $4^{1},8^{2},16^{2},32^{1},64^{1},128^{1}$\\
 & & 3   & $3^{2},27^{1},243^{1}$\\
\bottomrule\\
\end{tabular}
\caption{Sporadic almost simple $2$-transitive actions.  We only consider the primes $p$ such that  $pn_p < n$ in each case.}\label{tbl:sporadics}
\end{table}


\section{Low degree transitive groups}\label{sec:small}

This section contains computational results on which transitive groups of low degree have Property $(*)_2$ or Property $(*)_3$. This gives an overview of how the theoretical results above are realized in practice. Table \ref{tab:star2} gives summary data for the transitive groups of degrees at most $47$. In this table, $t(n)$ denotes the number of transitive groups of degree $n$. Note that by Lemma \ref{lem:elementary}, a transitive group $G$ of odd degree has Property $(*)_2$ precisely when $G$ has odd order. Similarly, a transitive group of degree $n$ with $n$ coprime to 3 has Property $(*)_3$ precisely when the order of $G$ is coprime to $3$. Moreover, by Lemma \ref{lem:elementary}(4), if $pn_p>n$ then all transitive permutation groups of degree $n$ have Property~$(*)_p$.

\begin{table}[H]
    \centering
    \begin{tabular}{rrrr@{\hskip 1cm}rrrr}
    
    \toprule
    $n$&$t(n)$&$(*)_2$ & $(*)_3$ & $n$&$t(n)$&$(*)_2$ & $(*)_3$\\
    \midrule
    2&1&1&1&3&2&1&2\\
    4&5&5&3& 5&5&1&3\\
    6&16&6&16& 7&7&2&2\\
    8&50&50&27& 9&34&5&34\\
    10&45&5&24& 11&8&2&4\\
    12&301&96&243& 13&9&2&3\\
    14&63&16&14& 15&104&5&66\\
    16&1954&1954&1438& 17&10&1&5\\
    18&983&115&983& 19&8&3&2\\
    20&1117&116&657& 21&164&17&43\\
    22&59&12&32& 23&7&2&4\\
    24&25000&7911&22245& 25&211&10&119\\
    26&96&12&24& 27&2392&231&2392\\
    28&1854&542&461& 29&8&2&6\\
    30&5712&131&4116& 31&12&4&4\\
    32&2801324&2801324&2737818& 33&162&16&100\\
    34&115&7&77& 35&407&12&73\\
    36&121279&9612&1113506& 37&11&3&3\\
    38&76&21&12& 39&306&37&92\\
    40&315842&132071&283122& 41&10&2&8\\
    42&9491&703&2335& 43&10&4&4\\
    44&2113&872&1540& 45&10923&256&7893\\
    46&56&16&36&47&6&2&4\\
    \bottomrule\\
    \end{tabular}
    \caption{Counts of low-degree transitive groups with Property $(*)_2$}
    \label{tab:star2}
\end{table}

\subsection{Maximal \texorpdfstring{$(*)_2$}{(*)2} groups}

As Property $(*)_p$ is closed under taking transitive subgroups of transitive groups, another characterization of the transitive groups with Property $(*)_p$ is to list the \emph{maximal} transitive groups with Property $(*)_p$. Then a transitive group has Property $(*)_p$ if and only if it is a subgroup of one of the listed maximal transitive groups with Property~$(*)_p$.

To illustrate this,
Table~\ref{tab:max} gives the maximal transitive groups with Property $(*)_2$ of degrees $2 \leqslant n \leqslant 39$. Most of these groups are wreath products of primitive groups and their structure can be given explicitly. The few exceptions are given by their indices in the lists of transitive groups in \MAGMA \cite{magma} and \textsf{GAP} \cite{GAP}. So $20_{89}$ is the group returned by the expression {\tt TransitiveGroup(20,89)} in either \MAGMA or \textsf{GAP}.

We note that for $n=2^k$, every transitive group has Property $(*)_2$ and hence the unique maximal transitive $(*)_2$ is the symmetric group $S_{2^k}$. Every transitive group of odd order has odd degree and has Property $(*)_2$. Hence for each odd degree, the table simply lists the maximal transitive groups of odd order.

\begin{longtable}{ll}
\toprule
Degree & \multicolumn{1}{c}{Structures of the maximal $(*)_2$ groups} \\
\midrule
\endhead
2 & $2$ \\
3 & $3$\\
4 & $S_4$\\
5 & $5$\\
6 & $3 \Wr 2$,\: $2 \Wr 3$,\: $\PSL(2,5)$\\
7 & $7:3$ \\
8 & $S_8$ \\
9 & $3 \Wr 3$ \\
\midrule
10 & $5 \Wr 2$,\:  $2 \Wr 5$ \\
11 & $11:5$ \\
12 & $\mathrm{PSL}(2,11)$,\:  
    $2 \Wr (3 \Wr 2)$,\:  
    $2 \Wr \PSL(2,5)$,:  
    $3 \Wr S_4$,\: 
    $\mathrm{PSL}(2,5) \Wr 2$,\: 
    $S_4 \Wr 3$ \\
13 & $13:3$ \\
14 & $(7 : 3) \Wr 2$,\:  $2 \Wr (7:3)$ \\
15 & $5 \Wr 3$,\:  $3 \Wr 5$ \\
16 & $S_{16}$ \\
17 & $17$ \\
18 & 
$3 \Wr 3 \Wr 2$,\:  $3 \Wr 2 \Wr 3$,\:  $2 \Wr 3 \Wr 3$,\:  $3 \Wr \PSL(2,5)$,\:  $\PSL(2,5) \Wr 3$ \\
19 & $19:9$ \\
\midrule
20 & 
    $20_{89} \leqslant 2 \Wr \PSL(2,9)$,\: 
    $\PSL(2,19)$,\:  $5 \Wr S_4$,\:  $2 \Wr 5 \Wr 2$,\:  $S_4 \Wr 5$ \\
21 &
    $(7:3) \Wr 3$,\:  $3 \Wr (7:3)$ \\
22 & $(11:5) \Wr 2$,\:  $2 \Wr (11:5)$ \\
23 & $23 : 11$ \\
24 & $\PSL(2,23)$,\:  $\PSL(2,11) \Wr 2$,\:  $2 \Wr \PSL(2,11)$,\:  $2 \Wr 3 \Wr S_4$,\:  $2 \Wr \PSL(2,5) \Wr 2$,\: \\ & $3 \Wr S_8$,\:  $\PSL(2,5) \Wr S_4$,\:  $S_4 \Wr 3 \Wr 2$,\:  $S_4 \Wr \PSL(2,5)$,\:  $S_8 \Wr 3$ \\
25 & $5^2:3$,\:  $5 \Wr 5$ \\
26 & $(13:3) \Wr 2$,\:  $2 \Wr (13:3)$ \\
27 & $3^3:13.3$,\:  $3 \Wr 3 \Wr 3$ \\
28 & $28_{120} \leqslant 2 \Wr \PSL(2,13)$,\: $\PGammaL(2,8)$,\: $\PSL(2, 27):3$,\: $(7:3) \Wr S_4$\\& $2 \Wr (7:3) \Wr 2$,\: $S_4 \Wr (7:3)$ \\
29 & $29:7$ \\
\midrule
30 & $5 \Wr 3 \Wr 2$,\: $5 \Wr 2 \Wr 3$,\: $5 \Wr \text{PSL}(2,5)$,\: $3 \Wr 5 \Wr 2$,\:$3 \Wr 2 \Wr 5$,\: $2 \Wr 5 \Wr 3$,\:\\&  $2 \Wr 3 \Wr 5$,\: $\text{PSL}(2,5) \Wr 5$ \\
31 & $31:15$ \\
32 & $S_{32}$ \\
33 & $(11:5) \Wr 3$,\: $3 \Wr (11:5)$ \\
34 &  $17 \Wr 2$,\: $2 \Wr 17$ \\
35 & $5 \Wr (7:3)$,\: $(7:3) \Wr 5$ \\
36 & 
$3 \Wr \PSL(2,11)$,\:
$3 \Wr 2 \Wr 3 \Wr 2$,\:
$\PSL(2,11) \Wr 3$,\:
$3 \Wr 3 \Wr S_4$,\:
$3 \Wr 2 \Wr \PSL(2,5)$,
\\ &
$2 \Wr 3 \Wr 3 \Wr 2$,\:
$3 \Wr \PSL(2,5) \Wr 2$,\:
$2 \Wr 3 \Wr 2 \Wr 3$,\:
$2 \Wr 3 \Wr \PSL(2,5)$,
\\ &
$3 \Wr S_4 \Wr 3$,\:
$2 \Wr \PSL(2,5) \Wr 3$,\:
$\PSL(2,5) \Wr 3 \Wr 2$,
\\&
$\PSL(2,5) \Wr 2 \Wr 3$,\: 
$\PSL(2,5) \Wr \PSL(2,5)$, 
$S_4 \Wr 3 \Wr 3$
\\
37 & $37:9$ \\
38 & $(19 : 9) \Wr 2$ \\
39 & $(13 : 3) \Wr 3$,\:  $3 \Wr (13 : 3)$ \\
\bottomrule
\caption{Structure of maximal $(*)_2$ groups}
\label{tab:max}
\end{longtable}

Many of these maximal groups are wreath products involving maximal $(*)_2$-groups of smaller degrees. By Lemma \ref{lem:imprimwreath}, if $a_n$ and $b_m$ have Property $(*)_2$ then $a_n\Wr b_m$ has Property~$(*)_2$ in its imprimitive action of degree $ab$. However, even if $a_n$ and $b_m$ are maximal $(*)_2$-groups of degree $a$ and $b$ respectively, it does not necessarily follow that $a_n \Wr b_m$ is a maximal $(*)_2$ group in $S_{ab}$. The smallest example where this occurs is $(3 \Wr 2) \Wr 2$ which has Property $(*)_2$, but is not maximal. By associativity of the wreath product $(3 \Wr 2) \Wr 2 = 3 \Wr (2 \Wr 2)$, but $2 \Wr 2$ is not maximal, as it is contained in $S_4$. 

In some sense the most interesting maximal transitive $(*)_2$ groups are the imprimitive ones that are not wreath products as these are examples where the permutation group induced on blocks does not have Property $(*)_2$. This occurs just twice in the table --- the group $20_{89}$ is a subgroup of the group $2 \Wr \PSL(2,9)$, and the group $28_{120}$ is a subgroup of the group $2 \Wr \PSL(2,13)$, although neither $\PSL(2,9)$ or $\PSL(2,13)$ have Property $(*)_2$.

\subsection*{Acknowledgements}
The work on this paper began at the 2020 annual research retreat of the Centre for the Mathematics of Symmetry and Computation. The authors thank the participants for the enjoyable and stimulating environment.


\end{document}